\newtheorem{remark}[theorem]{Remark}
\newtheorem{assumption}[theorem]{Assumption}
\providecommand{\inner}[1]{\left\langle#1\right\rangle} 
\begin{document}
	\title{Mathematical Analysis and Numerical Approximations of Density Functional Theory Models for Metallic Systems\thanks{This work was supported by  the National Natural Science Foundation of China under grant 12021001, the National Key R \& D Program of China under grants 2019YFA0709600 and 2019YFA0709601, and  the CAS President’s International Fellowship for Visiting Scientists under grants 2019VMA0029. de Gironcoli also acknowledges support from the European Union’s Horizon 2020 research and innovation program (Grant No. 824143, MaX ``MAterials design at the eXascale'' Centre of Excellence).}}
	\author{Xiaoying Dai\footnotemark[2] \and Stefano de Gironcoli\footnotemark[3] \and Bin Yang\footnotemark[4] \and Aihui Zhou\footnotemark[2]}
	
	\maketitle
	\renewcommand{\thefootnote}{\fnsymbol{footnote}}
	\footnotetext[2]{LSEC, Institute of Computational Mathematics and Scientific/Engineering Computing, Academy of Mathematics and Systems Science, Chinese Academy of Sciences, Beijing 100190, China; and School of Mathematical Sciences, University of Chinese Academy of Sciences, Beijing 100049, China. \{daixy, azhou\}@lsec.cc.ac.cn.}
	\footnotetext[3]{Scuola Internazionale Superiore di Studi Avanzati (SISSA) and CNR-IOM DEMOCRITOS Simulation Centre, Via Bononea 265, 34146 Trieste, Italy. degironc@sissa.it.}
	\footnotetext[4]{NCMIS, Academy of Mathematics and Systems Science, Chinese Academy of Sciences, Beijing 100190, China. binyang@lsec.cc.ac.cn.} 
	\renewcommand{\thefootnote}{\arabic{footnote}}
	
	\begin{abstract}
		In this paper, we investigate the energy minimization model of the ensemble Kohn-Sham density functional theory for metallic systems, in which a pseudo-eigenvalue matrix and a general smearing approach are involved. We study the invariance and the existence of the minimizer of the energy functional. We propose an adaptive double step size strategy and the corresponding preconditioned conjugate gradient methods for solving the energy minimization model. Under some mild but reasonable assumptions, we prove the global convergence of our algorithms. Numerical experiments show that our algorithms are efficient, especially for large scale metallic systems. In particular, our algorithms produce convergent numerical approximations for some metallic systems, for which the traditional self-consistent field iterations fail to converge.
	\end{abstract}
	\begin{keywords}
	 ensemble Kohn-Sham density functional theory, metallic systems,  mathematical analysis,  numerical approximation,  precondtioned conjugate gradient method, convergence
	\end{keywords}
	\begin{AMS}
		65K10, 65N25, 49S05, 35P30
	\end{AMS}

	\section{Introduction} 
	The Kohn-Sham density functional theory (DFT) is widely used in the electronic structure calculations \cite{becke2014perspective,bris2003special,martin2020electronic,parr1994densityfunctional}. The underlying mathematical model is often formulated as either a nonlinear eigenvalue problem or an energy minimization problem with an unitary constraint. The most commonly used approach for computing the Kohn-Sham DFT model is to solve the nonlinear eigenvalue problem by using the self-consistent field (SCF) iterations. However, the convergence of the SCF iterations is not guaranteed and the performance of the SCF iterations is unpredictable, especially for large scale systems. Consequently, people turn to pay attention to investigating the constrained energy minimization problem (see, e.g., \cite{dai2017conjugate,gao2018new,schneider2009direct,zhang2014gradient,zhao2015riemannian} and references therein).
	
	We particularly note that the efficient numerical methods for the classical Kohn-Sham DFT model, in which occupation numbers are either $1$ or $0$, are inefficient or even invalid for metallic systems. The main reason is that the gap between the highest occupied state and the lowest unoccupied state for metallic systems is very small or absent. More precisely, the classical Kohn-Sham DFT model becomes ill-posed due to its difficulty to separate the occupied states and unoccupied states.
	
	To provide a well-posed and efficient mathematical model for metallic systems, the unoccupied states have been incorporated into the classical Kohn-Sham DFT model and the fractional occupancies has been applied in computations. For instance, the ensemble Kohn-Sham DFT (or the finite-temperature Kohn-Sham DFT) is developed (see, e.g., \cite{kresse1996efficiency}), in which the associated total energy is a nonlinear functional of wavefunctions and pseudo-eigenvalues (or occupation numbers). We see that the ensemble Kohn-Sham DFT can be formulated as a nonlinear eigenvalue problem or a constrained energy minimization problem. It is not difficult to apply the SCF iteration approach for the classical Kohn-Sham DFT model to the ensemble Kohn-Sham DFT model. We understand that some preconditioners have been also constructed to accelerate the SCF iterations \cite{herbst2021blackbox,kresse1996efficiency,lin2013elliptic,zhou2018applicability}. Unfortunately, the convergence of the SCF iterations for the ensemble Kohn-Sham DFT is not guaranteed yet.
	
	In the context of solving the constrained energy minimization problem of the ensemble Kohn-Sham DFT, different from the classical Kohn-Sham DFT, we need to treat the occupation numbers as additional variables. There are more challenges for designing and analyzing an efficient algorithm. For example, we observe that the unitary invariance of the energy functional is not clear and applying the unitary transformation to the Kohn-Sham orbitals may not produce the ground states. We also understand that it is necessary to calculate the Kohn-Sham orbitals exactly \cite{kresse1996efficiency} and it is usually required to choose a good unitary transformation of the wavefunctions when designing an optimization algorithm. We refer to \cite{gillan1989calculation,grumbach1994initio,kresse1996efficiency} for constructing the unitary transformation of the wavefunctions to make energy approximations decay. Ismail-Beigi et al. \cite{ismail-beigi2000new} suggested expressing the unitary transformation as $P=e^{\mathrm{i}B}$ and minimizing the energy functional with respect to the Hermitian matrix $B$. However, the unitary transformation is incorporated into the model when some matrix representations are applied. Marzari et al. \cite{marzari1997ensemble} proposed an optimization algorithm by adopting a matrix representation of the occupation numbers, which we call the occupation matrix, and they got an unitarily invariant functional of wavefunctions by minimizing the occupation matrix. It is shown in \cite{marzari1997ensemble} that it is not necessary to construct the unitary transformation. Later on, Freysoldt et al. \cite{freysoldt2009direct} introduced the so-called pseudo-Hamitonian matrix and proposed a preconditioned conjuagte gradient (PCG) algorithm to minimize the energy functional with respect to the wavefunctions and the pseudo-Hamiltonian matrix, in which the unitary transformation is constructed automatically by minimizing the energy functional with respect to the pseudo-Hamiltonian matrix. Recently, Ulbrich et al. \cite{ulbrich2015proximal} studied a proximal gradient method for the ensemble Kohn-Sham DFT with the Fermi-Dirac smearing. We may refer to \cite{baarman2016direct,ruiz-serrano2013variational} for more works on the direct minimization algorithms for the ensemble Kohn-Sham DFT model. To our knowledge, there is little mathematical analysis on the ensemble Kohn-Sham DFT and its approximations. In this paper, we investigate the energy minimization model of the ensemble Kohn-Sham DFT from a mathematical aspect, and design and analyze the associated optimization algorithms.
	
	The rest of this paper is organized as follows. In the next section, we introduce some basic notation and the energy minimization model of the ensemble Kohn-Sham density functional theory with the pseudo-eigenvalue matrix and the general smearing method. In section \ref{sec:math}, we study the invariance and the exsistence of the minimizer for the ensemble Kohn-Sham energy functional. In section \ref{sec:numerical-approx}, we propose an adaptive double step size strategy and the corresponding preconditioned conjugate gradient (PCG) algorithms to solve the energy minimization problem. Under some mild but reasonable assumptions, we then prove the global convergence of the PCG algorithms based on the adaptive double step size strategy we proposed. We report several numerical experiments in section \ref{sec:numeraical} to demonstrate our theory and show the superiority of our algorithms over the traditional SCF iterations. We give some concluding remarks in section \ref{sec:concluding}. Finally, we provide some details of the gradient of the energy functional in Appendix \ref{appx:gradient} and the derivation process to get the standard Kohn-Sham equation in Appendix \ref{appx:Kohn-Sham}.
	
	\section{Preliminaries}
	\subsection{Basic notation}
	Throughout this paper, we consider periodic systems. Since we usually apply a large enough unit cell when calculating isolated systems, our definitions and conclusions are applicable to the isolated systems in practice. Let $\Omega=\{x_1\xi_1+x_2\xi_2+x_3\xi_3:x_1,x_2,x_3\in[0,1)\}$ be the unit cell, where $\xi_1,\xi_2,\xi_3\in\mathbb{R}^3$ are three non-coplanar vectors. Then the associated Bravais lattice and the reciprocal lattice  are $\mathcal{R}=\{n_1\xi_1+n_2\xi_2+n_3\xi_3: n_1,n_2,n_3\in\mathbb{Z}\}$ and $\mathcal{R}^*=\{m_1\zeta_1+m_2\zeta_2+m_3\zeta_3: m_1,m_2,m_3\in\mathbb{Z}\}$, respectively. Here, $\mathbb{Z}$ represents the set of all integers and
	\[
	\zeta_1=2\pi\frac{\xi_2\times\xi_3}{\xi_1\cdot(\xi_2\times\xi_3)},~\zeta_2=2\pi\frac{\xi_3\times\xi_1}{\xi_2\cdot(\xi_3\times\xi_1)},~\zeta_3=2\pi\frac{\xi_1\times\xi_2}{\xi_3\cdot(\xi_1\times\xi_2)}.
	\]
	
	For $\mathrm{G}\in \mathcal{R}^*$, we denote by $e_{\mathrm{G}} (r)=|\Omega|^{-1/2}e^{\mathrm{i}\mathrm{G}\cdot r}$ the planewave with wavevector $\mathrm{G}$, where $|\Omega|$ is the volume of $\Omega$. The family $\{e_{\mathrm{G}}\}_{G\in\mathcal{R}^*}$ forms an orthonormal basis of the complex valued $\mathcal{R}$-periodic functions space
	\begin{equation}\label{eq:L2C-p}
		L_{\#}^2(\Omega,\mathbb{C})=\left\{\psi\in L_{\mathrm{loc}}^2(\mathbb{R}^3,\mathbb{C}):\psi~\text{is $\mathcal{R}$-periodic}\right\},
	\end{equation}
	and for any $\psi\in L_\#^2(\Omega,\mathbb{C})$,
	\[
	\psi(r)=\sum_{\mathrm{G}\in \mathcal{R}^*}\hat{\psi}_{\mathrm{G}}e_{\mathrm{G}}(r)\quad\text{with}\quad \hat{\psi}_{\mathrm{G}}=\frac{1}{|\Omega|^{\frac12}}\int_{\Omega}\psi(r)e^{-\mathrm{i}\mathrm{G}\cdot r}\textup{d}r.
	\]
	We define the Sobolev space of complex valued $\mathcal{R}$-periodic functions as
	\[
	H_\#^s(\Omega,\mathbb{C})=\left\{\psi\in L^2_{\#}(\Omega,\mathbb{C}):\sum_{\mathrm{G}\in\mathcal{R}^*}(1+|\mathrm{G}|^2)^s|\hat{\psi}_{\mathrm{G}}|^2<\infty\right\}
	\]
	with $s\in\mathbb{R}$, endowed with the inner product
	\[
	(\psi,\phi)_{H_\#^s}=\sum_{\mathrm{G}\in\mathcal{R}^*}(1+|\mathrm{G}|^2)^s\bar{\hat{\psi}}_{\mathrm{G}}\hat{\phi}_{\mathrm{G}},
	\]
	and the induced norm
	\[
	\|\psi\|_{H_\#^s}^2=\sum_{\mathrm{G}\in\mathcal{R}^*}(1+|\mathrm{G}|^2)^s|\hat{\psi}_{\mathrm{G}}|^2.
	\]
	For convenience, unless otherwise specified, $(\cdot,\cdot)$ and $\|\cdot\|$ always represent the inner product and the norm of $L_{\#}^2(\Omega,\mathbb{C})$, respectively.
	
	Let $\Psi=(\psi_1,\ldots,\psi_N)\in (L_{\#}^2(\Omega,{\mathbb{C}}))^N,\Phi=(\phi_1,\ldots,\phi_N)\in (L_{\#}^2(\Omega,\mathbb{C}))^N$. Here $N$ is some positive integer. We can view $\Psi$ and $\Phi$ as vectors with elements being functions. Then we have
	\[
	\Psi\Phi^* = \sum_{i=1}^N \psi_i\bar{\phi}_i,~\Psi^*\Phi = (\bar{\psi}_i\phi_j)_{i,j=1}^{N}.
	\]
	For any $A=(A_{ij})_{i,j=1}^N\in \mathbb{C}^{N\times N}$, we denote by
	\[
	A\Psi^* = \left(\sum_{j=1}^N A_{1j}\bar{\psi}_j, \ldots, \sum_{j=1}^N A_{Nj}\bar{\psi}_j\right)^T,~\Psi A = \left(\sum_{i=1}^N A_{i1}\psi_i,\ldots,\sum_{i=1}^N A_{iN}\psi_i \right).
	\]
	Define
	\[
	\langle \Psi^* \Phi\rangle=((\psi_i,\phi_j ))_{i,j=1}^N\in\mathbb{C}^{N\times N}.
	\]
	
	For any positive integer $n$ and any $\Psi=(\Psi_1,\Psi_2,\ldots,\Psi_n),~\Phi=(\Phi_1,\Phi_2,\ldots,\Phi_n)\in((L^2(\Omega,\mathbb{C}))^N)^n$, we define its inner product as $\langle\Psi,\Phi \rangle=\sum_{i=1}^n\operatorname{tr}\langle\Psi^*_i\Phi_i\rangle$. The induced norm is $\|\Psi\|=\sqrt{\langle\Psi,\Psi\rangle}$. We shall use the notation
	\[
	\|\Psi\|_{\infty}=\max_{i=1,2,\ldots,n}\|\Psi_i\|
	\]
	for convenience.
	
	For any $A=(A_1,A_2,\ldots,A_n),B=(B_1,B_2,\ldots,B_n)\in(\mathbb{C}^{N\times N})^n$, we define its inner product as $\langle A,B\rangle=\sum_{i=1}^n\operatorname{tr}(A_i^* B_i)$. And the induced norm is Frobenius norm, denoted by $\|\cdot\|_F$. We shall use the notation $\|\cdot\|_{sF}$ defined as 
	\begin{equation*}
		\|A\|_{sF}=\min_{c\in\mathbb{C}}\|cI_N-A\|_F,
	\end{equation*}
	where $cI_N-A\coloneqq(cI_N-A_1,cI_N-A_2,\ldots,cI_N-A_n)$. It is easy to obtain
	\begin{equation}\label{eq:c-for-sf}
		\|A\|_{sF}=\left\|\frac{\sum_{i=1}^n\operatorname{tr}A_i}{nN}I_N-A\right\|_F.
	\end{equation}
	Define 
	\[
	\|A\|_{sF,\infty}=\min_{i=1,2,\ldots,n}\|c_A I_N-A_i\|_{F}, 
	\]
	where $c_A = \frac{\sum_{i=1}^n\operatorname{tr}A_i}{nN}$. It is easy to get the following properties for $\|\cdot\|_{sF}$ by \eqref{eq:c-for-sf}.
	\begin{proposition}\label{prop:sF}
		Let $A,B\in(\mathbb{C}^{N\times N})^n$, then the following properties of $\|\cdot\|_{sF}$ hold true:
		\begin{enumerate}
			\item $\|A-B\|_{sF}=0$ if and only if there exists $c\in\mathbb{C}$ such that $A=B+cI_N$;
			\item $\|\cdot\|_{sF}$ satisfies the triangle inequality, i.e., $\|A+B\|_{sF}\le\|A\|_{sF}+\|B\|_{sF}$;
			\item $\|\cdot\|_{sF}$ satisfies the absolute homogeneity, i.e., $\|\alpha A\|_{sF}=|\alpha|\|A\|_{sF}$ for any $\alpha\in\mathbb{C}$;
			\item\label{prop:sF-inner} if $\displaystyle\sum_{i=1}^n\operatorname{tr}A_i=0$, then
			\[
			|\inner{A,B}|\le\|A\|_{sF}\|B\|_{sF}.
			\]
		\end{enumerate}
	\end{proposition}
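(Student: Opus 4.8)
The plan is to reduce all four properties to a single structural observation: the map $\Pi A := A - c_A I_N$, where $c_A = \frac{\sum_{i=1}^n \operatorname{tr} A_i}{nN}$ and $c_A I_N := (c_A I_N,\dots,c_A I_N)$, is the orthogonal projection of the inner-product space $\big((\mathbb{C}^{N\times N})^n,\langle\cdot,\cdot\rangle\big)$ onto the subspace $V := \{A : \sum_{i=1}^n \operatorname{tr} A_i = 0\}$. Indeed, \eqref{eq:c-for-sf} says precisely that $\|A\|_{sF} = \|c_A I_N - A\|_F = \|\Pi A\|_F$ for every $A$, so once $\Pi$ is understood the rest is bookkeeping.

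First I would record the elementary facts about $\Pi$. It is linear, since $A \mapsto c_A$ is linear; it is idempotent, because $\sum_i \operatorname{tr}(\Pi A)_i = \sum_i \operatorname{tr} A_i - nN c_A = 0$ forces $c_{\Pi A} = 0$ and hence $\Pi(\Pi A) = \Pi A$; its range is exactly $V$, since $\Pi A \in V$ always while $\Pi A = A$ whenever $A \in V$; and its kernel is exactly $\operatorname{span}\{(I_N,\dots,I_N)\}$, since $\Pi A = 0 \iff A = c_A I_N$. Finally, $\langle (I_N,\dots,I_N), A\rangle = \sum_i \operatorname{tr} A_i$ shows that $\operatorname{span}\{(I_N,\dots,I_N)\}$ is exactly $V^\perp$. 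A linear idempotent whose range and kernel are orthogonal complements is an orthogonal projection, so $\Pi = \Pi^*$ and $\|\Pi A\|_F \le \|A\|_F$.

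The four claims now follow in a couple of lines each. For (1), $\|A-B\|_{sF} = \|\Pi(A-B)\|_F = 0$ iff $A - B \in \ker\Pi = \operatorname{span}\{(I_N,\dots,I_N)\}$, i.e.\ $A = B + cI_N$ for some $c \in \mathbb{C}$. For (2) and (3), linearity of $\Pi$ carries the triangle inequality and the absolute homogeneity of $\|\cdot\|_F$ over verbatim: $\|A+B\|_{sF} = \|\Pi A + \Pi B\|_F \le \|\Pi A\|_F + \|\Pi B\|_F = \|A\|_{sF}+\|B\|_{sF}$ and $\|\alpha A\|_{sF} = \|\Pi(\alpha A)\|_F = \|\alpha\,\Pi A\|_F = |\alpha|\,\|A\|_{sF}$. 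For (4), if $\sum_i \operatorname{tr} A_i = 0$ then $A \in V$, so $\Pi A = A$ and $\|A\|_{sF} = \|A\|_F$; using $\Pi = \Pi^*$ and $\Pi A = A$ we get $\langle A, B\rangle = \langle \Pi A, B\rangle = \langle A, \Pi B\rangle$, and the Cauchy--Schwarz inequality for $\langle\cdot,\cdot\rangle$ gives $|\langle A,B\rangle| = |\langle A,\Pi B\rangle| \le \|A\|_F\|\Pi B\|_F = \|A\|_{sF}\|B\|_{sF}$.

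There is no real obstacle here; the only point that needs care is the linear-algebra bookkeeping that identifies $\ker\Pi$ with the line spanned by $(I_N,\dots,I_N)$ and confirms that this line is $V^\perp$, after which everything is a direct consequence of standard Hilbert-space facts. A more hands-on alternative would skip the projection language and prove (2)--(4) by direct manipulation of $\|c_A I_N - A\|_F$ together with the elementary inequality $|\operatorname{tr}(X^*Y)| \le \|X\|_F\|Y\|_F$, but the projection viewpoint keeps the computations to a minimum and makes the structure of $\|\cdot\|_{sF}$ transparent.
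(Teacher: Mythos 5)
Your proof is correct, and since the paper offers no argument at all (it simply asserts the proposition follows easily from \eqref{eq:c-for-sf}), you are filling in the omitted reasoning rather than diverging from it. Reading \eqref{eq:c-for-sf} as saying that $A\mapsto A-c_AI_N$ is the orthogonal projection onto the trace-free subspace $V$, with kernel spanned by $(I_N,\dots,I_N)=V^\perp$, is exactly the structure the paper's closed-form minimizer encodes, and all four items then reduce to standard Hilbert-space facts (the only point worth double-checking, which you handle, being the self-adjointness of $\Pi$ used in item~4).
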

	It follows from Proposition \ref{prop:sF} and \eqref{eq:c-for-sf} that $\|A\|_{sF}$ is the norm of the following linear space
	\[
	\left\{A=(A_1,A_2,\ldots,A_n)\in(\mathbb{C}^{N\times N})^n:\sum_{i=1}^n\operatorname{tr}A_i=0\right\}.
	\]
	
	The Stiefel manifold is defined by
	$$\mathcal{M}_{\mathcal{B},\mathbb{C}}^N=\{{\Psi}\in (H^1_{\#}(\Omega,\mathbb{C}))^N:\langle\Psi^*\mathcal{B}\Psi\rangle=I_N\},$$
	where $\mathcal{B}$: $(L^2(\Omega,\mathbb{C}))^N\to(L^2(\Omega,\mathbb{C}))^N$ is a bounded and self-adjoint operator. Let
	\[
	\mathcal{O}^{N\times N}_{\mathbb{C}}=\{P\in \mathbb{C}^{N\times N}:P^* P=I_N\},~
	\mathcal{S}^{N\times N}_{\mathbb{C}}=\{A\in \mathbb{C}^{N\times N}:A^* =A\}.
	\] 
	
	If only real values are taken into account, we then remove $\mathbb{C}$ or replace $\mathbb{C}$ with $\mathbb{R}$ and replace the conjugate transpose symbol $*$ by the transpose symbol $T$ in the above notation. We note that the Fourier coefficients of real valued $\mathcal{R}$-periodic functions have some symmetry, more precisely, 
	\begin{equation}\label{eq:HsR-p}
		H_\#^s(\Omega)=\left\{\psi\in H_\#^s(\Omega,\mathbb{C}):\forall \mathrm{G}\in\mathcal{R}^*,~\hat{\psi}_{-\mathrm{G}}=\bar{\hat{\psi}}_{\mathrm{G}}\right\}.
	\end{equation}

	We then introduce some projections of wavefunctions. Let $\Psi\in\mathcal{M}_{\mathcal{B},\mathbb{C}}^N$. We know that the tangent space of $\mathcal{M}_{\mathcal{B},\mathbb{C}}^N$ at $\Psi$ is
	\[
	\mathcal{T}_{\Psi} \mathcal{M}_{\mathcal{B}}^{N}=\{ \Phi\in(H^1_{\#}(\Omega,\mathbb{C}))^N:\langle \Phi^*\mathcal{B}\Psi\rangle+\langle\Psi^*\mathcal{B}\Phi\rangle=0\in\mathbb{C}^{N\times N}\}.
	\]
	Let $$K_{\Psi}=\{\Phi\in(H^1_{\#}(\Omega,\mathbb{C}))^N:\langle \Phi^*\Psi\rangle+\langle\Psi^*\Phi\rangle=0\in\mathbb{C}^{N\times N}\}.$$
	It is clear that $\mathcal{T}_{\Psi} \mathcal{M}_{\mathcal{B},\mathbb{C}}^{N}=K_{\Psi}$ provided $\mathcal{B}=\mathcal{I}$, where $\mathcal{I}$ is the identity operator. For any $\alpha\in\mathbb{R}$, we define the linear operator onto $K_{\Psi}$ by
	\begin{equation}\label{eq:proj-alpha}
		P_{\alpha,\Psi}(\Phi) = (\Phi-\mathcal{B}\Psi\langle\Psi^*\Phi\rangle)+\alpha\mathcal{B}\Psi(\langle\Psi^*\Phi\rangle-\langle\Phi^*\Psi\rangle),~\forall\Phi\in(H^1_{\#}(\Omega,\mathbb{C}))^N.
	\end{equation}
	We see that
	\begin{equation*}
		P_{\alpha,\Psi}^2(\Phi)=P_{\alpha,\Psi}(\Phi)+\alpha(2\alpha-1)\mathcal{B}\Psi(\langle\Psi^*\Phi\rangle-\langle\Phi^*\Psi\rangle),~\forall\Phi\in(H^1_{\#}(\Omega,\mathbb{C}))^N,
	\end{equation*}
	which indicates that $P_{\alpha,\Psi}$ is a projection if and only if $\alpha=0$ or $1/2$. Define
	\begin{equation*}
		P_{\alpha,\Psi}^*(\Phi)=(\Phi-\Psi\langle\Psi^*\mathcal{B}\Phi\rangle)+\alpha\Psi(\langle\Psi^*\mathcal{B}\Phi\rangle-\langle\Phi^*\mathcal{B}\Psi\rangle),~\forall\Phi\in(H^1_{\#}(\Omega,\mathbb{C}))^N.
	\end{equation*}
	We have that for any $\Phi_1,\Phi_2\in (H_{\#}^1(\Omega,\mathbb{C}))^N$, $\inner{P_{\alpha,\Psi}(\Phi_1),\Phi_2}$ and $\inner{\Phi_1,P_{\alpha,\Psi}^*(\Phi_2)}$ have the same real part. Thus $P_{\alpha,\Psi}^*$ is the adjoint operator of $P_{\alpha,\Psi}$ if only real functions are involved. We mention that $P_{0,\Psi}(\Phi)$ is orthogonal to $\Psi$ for any $\Phi\in (H_{\#}^1(\Omega,\mathbb{C}))^N$. 
	
	\subsection{Ensemble Kohn-Sham DFT model for metallic systems}\label{sec:E-KSDFT}
	We consider the ensemble Kohn-Sham density functional theory, in which we adopt the matrix representation of occupations \cite{freysoldt2009direct,marzari1997ensemble}. We see from Bloch's theorem \cite{martin2020electronic} that the kinetic energy and the electronic density are given by the integral over the Brillouin zone (BZ). If BZ sampling is used to discrete the integral over BZ, the ensemble Kohn-Sham energy functional with a general smearing approach can be formulated as 
	\begin{equation}\label{eq:free-energy}
		\mathcal{F}(\Psi,\eta)=\mathcal{E}(\Psi,\eta)-\sigma\sum_{k\in\mathcal{K}}w_{\mathrm{k}}\operatorname{tr}S\left(\frac{1}{\sigma}(\eta_{\mathrm{k}}-\mu I_N)\right)
	\end{equation}
	with wavefunctions $\Psi=(\Psi_{\mathrm{k}})_{\mathrm{k}\in\mathcal{K}}\in ((H^1_{\#}(\mathbb{R}^3,\mathbb{C}))^N)^{|\mathcal{K}|}$ and the pseudo-eigenvalue matrices $\eta=(\eta_{\mathrm{k}})_{\mathrm{k}\in\mathcal{K}}\in(\mathcal{S}_{\mathbb{C}}^{N\times N})^{|\mathcal{K}|}$, where
	\begin{align*}
		\mathcal{E}(\Psi,\eta)&=\sum_{\mathrm{k}\in\mathcal{K}}w_{\mathrm{k}}\operatorname{tr}\left(\left\langle \Psi^*_{\mathrm{k}}\left(-\frac12(\mathrm{i}\mathrm{k}+\nabla)^2+V_{\text{nl}}\right)\Psi_{\mathrm{k}}\right\rangle F_{\eta_{\mathrm{k}}}\right)+\int_{\Omega} V_\text{loc}(r)\rho_{\Psi,\eta}(r)\textup{d}r\\ 
		&\quad+\frac12\int_{\Omega}\int_\Omega\frac{\rho_{\Psi,\eta}(r)\rho_{\Psi,\eta}(r')}{|r-r'|}\textup{d}r \textup{d}r'+\mathcal{E}_{\text{xc}}(\rho_{\Psi,\eta}).
	\end{align*}
	Here $\mathcal{K}$ is a finite subset of BZ, $w_{\mathrm{k}}$ is the weight associated to k-points $\mathrm{k}\in\mathcal{K}$ satisfying 
	 \[
	 \sum_{\mathrm{k}\in\mathcal{K}}w_{\mathrm{k}}=2,
	 \]
	 $N$ is the number of wavefunctions for one k-point, $\sigma=k_B T$ with the Boltzmann constant $k_B$ and the temperature $T$,
	 \[
	 F_{\eta_{\mathrm{k}}} = f\left(\frac{1}{\sigma}(\eta_{\mathrm{k}}-\mu I_N) \right),
	 \]
	 $f$ is a function which is sometimes called the smearing function, and $\mu$ is a function of $\eta$ which will be determined later, $S$ is a function associated to the entropy term. The electronic density $\rho_{\Psi,\eta}$ is
	 \[
	 \rho_{\Psi,\eta}=\sum_{\mathrm{k}\in\mathcal{K}}w_{\mathrm{k}}\operatorname{tr}((\Psi^*_{\mathrm{k}}\Psi_{\mathrm{k}}+\langle\Psi^*_{\mathrm{k}} M \rangle\mathcal{Q}\langle M^*\Psi_{\mathrm{k}}\rangle) F_{\eta_{\mathrm{k}}})
	 \]
	 with $M=(\varphi_1,\ldots,\varphi_{K})\in (L^2_{\#}(\Omega,\mathbb{C}))^{K}$ and the Hermitian-matrix-valued function $\mathcal{Q}=(\mathcal{Q}_{ij})_{i,j=1}^{K}\in(L^2_{\#}(\Omega,\mathbb{C}))^{K\times K}$. Sometimes we shall simply denote $\rho_{\Psi,\eta}$ by $\rho$. $V_{\textup{loc}}\in L^2_{\#}(\Omega,\mathbb{C})$ is the local pseudopotential and $V_\text{nl}$ is the nonlocal pseudopotential defined by $\Psi_{\mathrm{k}}\mapsto V_\text{nl}(\Psi_{\mathrm{k}})=M D\langle M^*\Psi_{\mathrm{k}}\rangle$ with $D\in\mathcal{S}^{K\times K}_{\mathbb{C}}$. Note that the form of \eqref{eq:free-energy} is suitable for the full potential calculations, the pseudopotential approximations \cite{troullier1991efficient,vanderbilt1990soft} and the projector augmented wave (PAW) method \cite{blochl1994projector}. For instance, if the norm-conserving pseudopotential is applied, then $\mathcal{Q}=0$ and $\rho_{\Psi,\eta}=\sum_{\mathrm{k}\in\mathcal{K}}w_{\mathrm{k}}\operatorname{tr}(\Psi^*_{\mathrm{k}}\Psi_{\mathrm{k}})$. In theory, $N$ should be $+\infty$ for the ensemble Kohn-Sham DFT. However, $N$ has to be set to be finite in practice. We require $N>N_b$ where $N_b$ is the number or the half number of electrons. For example, in Quantum ESPRESSO, $N$ is set to $N_b+\lfloor0.2N_b\rfloor$ by default, where $\lfloor x\rfloor$ is the greatest integer not larger than $x$.
	
	Now we address the function $\mu$ of $\eta$ in detail. Assume that $f$ and $S$ satisfy the following properties:
	\begin{enumerate}[leftmargin=40pt,label=A.\Roman*]
		\item\label{asp:fS1} $f$ and $S$ are analytic functions on $\mathbb{R}$ satisfying $S'(x)=xf'(x)$.
		\item\label{asp:fS2} $\lim\limits_{x\to-\infty}f(x)=1$ and $\lim\limits_{x\to+\infty}f(x)=0$.
		\item\label{asp:fS3} $\displaystyle\lim_{x\to+\infty} S(x)$ and $\displaystyle\lim_{x\to-\infty}S(x)$ exist.
		\item\label{asp:fS4} $f$ is strictly monotonically decreasing.
	\end{enumerate}
	Under these assumptions, for given $\eta\in\left(\mathcal{S}_{\mathbb{C}}^{N\times N}\right)^{|\mathcal{K}|}$, there is one and only one $\mu\in\mathbb{R}$ satisfying $\sum\limits_{\mathrm{k}\in\mathcal{K}}w_{\mathrm{k}}\operatorname{tr}F_{\eta_{\mathrm{k}}}=N_e$. Here $N_e$ is the number of electrons. Thus, we choose $\mu$ in \eqref{eq:free-energy} as the unique function of $\eta$ from $\left(\mathcal{S}_{\mathbb{C}}^{N\times N}\right)^{|\mathcal{K}|}$ to $\mathbb{R}$ such that $\sum\limits_{\mathrm{k}\in\mathcal{K}}w_{\mathrm{k}}\operatorname{tr}F_{\eta_{\mathrm{k}}}=N_e$.
	
	We list several possible choices for the smearing function used in the literature.
	\begin{itemize}
		\item the Fermi-Dirac smearing \cite{callaway1984density}:
		\[
		f_{\textup{FD}}(x)=\frac{1}{1+e^x},~S_{\textup{FD}}(x)=-[f_{\textup{FD}}(x)\ln f_{\textup{FD}}(x)+(1-f_{\textup{FD}}(x))\ln(1-f_{\textup{FD}}(x))].
		\]
		\item the Gaussian smearing \cite{elsasser1994densityfunctional,fu1983firstprinciples}:
		\[
		f_{\textup{GS}}(x)=\frac{1}{2}(1-\operatorname{erf}(x)),~S_{\textup{GS}}(x) = \frac{1}{2\sqrt{\pi}} e^{-x^2}.
		\]
		\item the Methfessel-Paxton smearing \cite{methfessel1989highprecision}:
		\[
		f_{\textup{MP},m}(x) = f_{\textup{GS}}(x) + \sum_{i=1}^m A_i H_{2i-1}(x)e^{-x^2},~S_{\textup{MP},m}(x) = \frac12 A_m H_{2m}(x)e^{-x^2},
		\]
		where $H_i$ are the Hermite polynomials (defined as $H_0(x)=1,\,H_{i+1}(x)=2xH_i(x)-H_i'(x)$) and 
		\[
		A_i=\frac{(-1)^i}{i!4^i\sqrt{\pi}}.
		\] 
		\item the Marzari-Vanderbilt smearing \cite{marzari1996abinitio,marzari1999thermal}:
		\[
		f_{\textup{MV}}(x)=f_{\text{GS}}(x)+\frac{1}{4\sqrt{\pi}}\left(-\frac12 aH_2(x)+H_1(x)\right)e^{-x^2},
		\]
		\[
		S_{\textup{MV}}(x)=\frac{1}{4\sqrt{\pi}}\left(-\frac12 H_2(x)+ax^2 H_1(x)\right)e^{-x^2},
		\]
		where $a$ is a free parameter such that $f_{\textup{MV}}(x)$ is nonnegative for any $x\in\mathbb{R}$. Marzari suggests choosing $a=-0.5634$ or $a=-\sqrt{2/3}$ in \cite{marzari1996abinitio}.
	\end{itemize}
		
	We see that the assumptions \ref{asp:fS1}-\ref{asp:fS2} imply the existence of $\mu\in\mathbb{R}$ such that $\sum\limits_{\mathrm{k}\in\mathcal{K}}w_{\mathrm{k}}\operatorname{tr}F_{\eta_{\mathrm{k}}}=N_e$ for any given $\eta\in\left(\mathcal{S}_{\mathbb{C}}^{N\times N}\right)^{|\mathcal{K}|}$. Further, if \ref{asp:fS4} is satisfied, then $\mu$ is unique. Thus, $\mu$ is a function of $\eta$ when the Fermi-Dirac smearing and the Gaussian smearing are applied. But for some other smearing such as the Methfessel-Paxton smearing and the Marzari-Vanderbilt smearing, it is still open whether $\mu$ is unique. In practice, we will always assume that $\mu$ is a function of $\eta$ such that $\sum\limits_{\mathrm{k}\in\mathcal{K}}w_{\mathrm{k}}\operatorname{tr}F_{\eta_{\mathrm{k}}}=N_e$. 
	
	According to the ensemble Kohn-Sham DFT, we solve the following constrained minimization problem
	\begin{equation}\label{opt-eta-sym}
		\inf_{({\Psi},\eta)\in\left(\mathcal{M}_{\mathcal{B},\mathbb{C}}^N\right)^{|\mathcal{K}|}\times\left(\mathcal{S}_{\mathbb{C}}^{N\times N}\right)^{|\mathcal{K}|}}\mathcal{F}({\Psi},\eta)
	\end{equation}
	to obtain the ground state of the system, where $\mathcal{B}$ is an operator defined by $\Psi\mapsto\mathcal{B}\Psi=\Psi + M Q \langle M^*\Psi\rangle$ with $Q=\displaystyle\int_{\Omega}\mathcal{Q}(r)\textup{d}r$. Note that $\mathcal{B}$ is bounded and self-adjoint. The associated Lagrange functional is 
	\begin{equation}\label{Lagrange-multik}
		\mathcal{L}(\Psi,\eta,\Lambda)=\mathcal{F}(\Psi,\eta)-\sum_{\mathrm{k}\in\mathcal{K}}w_{\mathrm{k}}\operatorname{tr}[\Lambda_{\mathrm{k}}^*(\langle \Psi_{\mathrm{k}}^*\mathcal{B}\Psi_{\mathrm{k}}\rangle-I_N)]
	\end{equation}
	with the Lagrange multiplier  $\Lambda=(\Lambda_{\mathrm{k}})_{\mathrm{k}\in\mathcal{K}}\in\left(\mathbb{C}^{N\times N}\right)^{|\mathcal{K}|}$. Note that throughout this paper, since our discussion with respect to $\eta$ is in the linear space $\left(\mathcal{S}_{\mathbb{C}}^{N\times N}\right)^{|\mathcal{K}|}$ over $\mathbb{R}$, there is no term associated with the constraint $\eta\in\left(\mathcal{S}_{\mathbb{C}}^{N\times N}\right)^{|\mathcal{K}|}$ in the Lagrange functional \eqref{Lagrange-multik}.
	
	Assume that the exchange-correction functional $\mathcal{E}_{\text{xc}}$ is differentiable. We regard $\Psi_{\mathrm{k}}$ and $\bar{\Psi}_{\mathrm{k}}$ as two independent variables for all $\mathrm{k}\in\mathcal{K}$ and view $\mathcal{F}$ as a functional of $\Psi$, $\bar{\Psi}$ and $\eta$. Then we get (see Appendix \ref{appx:gradient})
	\[
	\mathcal{F}_{\Psi_{\mathrm{k}}}(\Psi,\eta)=w_{\mathrm{k}}H_{\mathrm{k}}(\rho_{\Psi,\eta})\Psi_{\mathrm{k}} F_{\eta_{\mathrm{k}}}
	\]
	and
	\begin{equation}\label{eq:L_Psi}
	\mathcal{L}_{\Psi_{\mathrm{k}}}(\Psi,\eta,\Lambda)=w_{\mathrm{k}}(H_{\mathrm{k}}(\rho_{\Psi,\eta})\Psi_{\mathrm{k}} F_{\eta_{\mathrm{k}}} - \mathcal{B}\Psi_{\mathrm{k}}\Lambda_{\mathrm{k}}),
	\end{equation}
	where $\mathcal{F}_{\Psi_{\mathrm{k}}}$ and $\mathcal{L}_{\Psi_{\mathrm{k}}}$ are Wirtinger derivatives, 
	\[
	H_{\mathrm{k}}(\rho) = -\frac12(\mathrm{i}\mathrm{k}+\nabla)^2 + \tilde{V}_\text{loc}(\rho)+ \tilde{V}_{\text{nl}}(\rho)
	\]
	with $\displaystyle\tilde{V}_{\text{loc}}(\rho)=V_\text{loc} + \int_{\Omega}\frac{\rho(r)}{|\cdot-r|}\textup{d}r+V_{\text{xc}}(\rho)$, $\tilde{V}_{\text{nl}}(\rho):\Psi_{\mathrm{k}}\mapsto V_{\text{nl}}(\Psi_{\mathrm{k}})+ M\tilde{D}\inner{M^*\Psi_{\mathrm{k}}}$, $\displaystyle V_{\text{xc}}(\rho)=\frac{\delta \mathcal{E}_\text{xc}}{\delta\rho}$, and
	\[
	\tilde{D}=\int_{\Omega}\tilde{V}_{\text{loc}}(\rho)(r)\mathcal{Q}(r)\operatorname{d\!}r\in\mathcal{S}_{\mathbb{C}}^{K \times K}.
	\]
	Here we use the convenient notation $\mathcal{F}(\Psi,\eta)=\mathcal{F}(\Psi,\bar{\Psi},\eta)$ and $\mathcal{L}(\Psi,\eta,\Lambda)=\mathcal{L}(\Psi,\bar{\Psi},\eta,\Lambda)$. Set
	\[
	\nabla_{\Psi_{\mathrm{k}}}\mathcal{F}(\Psi,\eta)=2w_{\mathrm{k}}(H_{\mathrm{k}}(\rho_{\Psi,\eta})\Psi_{\mathrm{k}} - \mathcal{B}\Psi_{\mathrm{k}}\langle\Psi_{\mathrm{k}}^* H(\rho_{\Psi,\eta})\Psi_{\mathrm{k}}\rangle)F_{\eta_{\mathrm{k}}}
	\]
	and $\nabla_{\Psi}\mathcal{F}=(\nabla_{\Psi_{\mathrm{k}}}\mathcal{F})_{\mathrm{k}\in\mathcal{K}}$. Given $\eta$, we denote by $\nabla_{\eta_{\mathrm{k}}}\mathcal{F}=\mathcal{F}_{\eta_{\mathrm{k}}}^T$ and  $\nabla_\eta\mathcal{F}=(\nabla_{\eta_{\mathrm{k}}}\mathcal{F})_{\mathrm{k}\in\mathcal{K}}$, where
	\[
	\mathcal{F}_{\eta_{\mathrm{k}}}=\left(\frac{\partial\mathcal{F}}{\partial\eta_{\mathrm{k}ij}}\right)_{i,j=1}^N.
	\]
	When all $\eta_{\mathrm{k}}$ are diagonal matrices, $\displaystyle\frac{\partial\mathcal{F}}{\partial\eta_{\mathrm{k}ij}}$ is given by
	\begin{align*}
		\frac{\partial \mathcal{F}}{\partial\eta_{\mathrm{k}ij}}&=w_{\mathrm{k}}\bigg( (\langle\psi_{\mathrm{k}i},H_{\mathrm{k}}(\rho_{\Psi,\eta})\psi_{\mathrm{k}i}\rangle-\epsilon_{\mathrm{k}i})\frac{1}{\sigma}f'\left(\frac{\epsilon_{\mathrm{k}i}-\mu}{\sigma}\right)\delta_{ij}\\
		&\quad-\frac{ f'\left(\frac{\epsilon_{\mathrm{k}'i}-\mu}{\sigma}\right)\delta_{ij}}{\sum_{\mathrm{k}'}w_{\mathrm{k}'}\sum_{i'=1}^N f'\left(\frac{\epsilon_{\mathrm{k}'i'}-\mu}{\sigma}\right)} d_{\mu}\\
		&\quad +\langle\psi_{\mathrm{k}j},H(\rho_{\Psi,\eta})\psi_{\mathrm{k}i}\rangle\frac{f_{\mathrm{k}j}-f_{\mathrm{k}i}}{\epsilon_{\mathrm{k}j}-\epsilon_{\mathrm{k}i}}(1-\delta_{ij})\bigg)
	\end{align*}
	for any $\mathrm{k}\in\mathcal{K}$, where $\Psi_{\mathrm{k}}=(\psi_{\mathrm{k}1},\psi_{\mathrm{k}2},\ldots,\psi_{\mathrm{k}N})$, $\eta_{\mathrm{k}}=\operatorname{Diag}(\epsilon_{\mathrm{k}1},\epsilon_{\mathrm{k}2},\ldots,\epsilon_{\mathrm{k}N})$, $f_{\mathrm{ki}}=f((\epsilon_{\mathrm{k}i}-\mu)/\sigma)$, $\frac{f_{\mathrm{k}j}-f_{\mathrm{k}i}}{\epsilon_{\mathrm{k}j}-\epsilon_{\mathrm{k}i}}=\frac{1}{\sigma}f\left(\frac{\epsilon_{\mathrm{k}i}-\mu}{\sigma}\right)$
	provided $\epsilon_{\mathrm{k}j}=\epsilon_{\mathrm{k}i}$,
	\begin{equation*}
		d_{\mu}=\sum_{\mathrm{k}'\in\mathcal{K}}w_{\mathrm{k}'}\sum_{i'=1}^N (\langle\psi_{\mathrm{k}'i'},H_{\mathrm{k}'}(\rho_{\Psi,\eta})\psi_{\mathrm{k}'i'}\rangle-\epsilon_{\mathrm{k}'i'})\frac{1}{\sigma}f'\left(\frac{\epsilon_{\mathrm{k}'i'}-\mu}{\sigma}\right).
	\end{equation*}
	It is clear that $\mathcal{L}_{\Psi_\mathrm{k}}(\Psi,\eta,\Lambda)=0$ and $\mathcal{L}_{\eta_{\mathrm{k}}}(\Psi,\eta,\Lambda)=0$ for all $\mathrm{k}\in\mathcal{K}$ mean that $\nabla_{\Psi}\mathcal{F}(\Psi,\eta)=0$ and $\nabla_{\eta}\mathcal{F}(\Psi,\eta)=0$. And $\nabla_{\Psi}\mathcal{F}(\Psi,\eta)=0$ and $\nabla_{\eta}\mathcal{F}(\Psi,\eta)=0$ mean that there exists some $\Lambda$ such that $\mathcal{L}_{\Psi_\mathrm{k}}(\Psi,\eta,\Lambda)=0$ and $\mathcal{L}_{\eta_{\mathrm{k}}}(\Psi,\eta,\Lambda)=0$ for all $\mathrm{k}\in\mathcal{K}$. As for the classical Kohn-Sham DFT model, let $\mathcal{L}_{\Psi}(\Phi,\eta,\Lambda)=0$ and $\mathcal{L}_{\eta}(\Phi,\eta,\Lambda)=0$, we will obtain the standard Kohn-Sham equation (see Appendix \ref{appx:Kohn-Sham} for details).
	
	\section{Mathematical analysis}\label{sec:math}
	In this section, we investigate some basic mathematical properties of the ensemble Kohn-Sham DFT model, including the invariance and the existence of the minimizer of the energy functional.
	\subsection{Invariance}\label{subsec:invariance}
	We first have the following invariance of the energy functional.
	\begin{theorem}
		For any $c\in\mathbb{R}$, $(\Psi,\eta)\coloneqq(\Psi_{\mathrm{k}},\eta_{\mathrm{k}})_{\mathrm{k}\in\mathcal{K}}\in ((H_\#^1(\Omega,\mathbb{C}))^{N})^{|\mathcal{K}|}\times\left(\mathcal{S}_{\mathbb{C}}^{N\times N}\right)^{|\mathcal{K}|}$ and $P\coloneqq(P_{\mathrm{k}})_{\mathrm{k}\in\mathcal{K}}\in\left(\mathcal{O}^{N\times N}_{\mathbb{C}}\right)^{|\mathcal{K}|}$, there holds
		\begin{equation}\label{eq:shift-unitarily-invariant}
			\mathcal{F}(\Psi P,P^*(\eta+c I_N) P)=\mathcal{F}(\Psi,\eta),
		\end{equation}
		where $\Psi P=(\Psi_{\mathrm{k}}P_{\mathrm{k}})_{\mathrm{k}\in\mathcal{K}}$, $P^*\eta P=(P_{\mathrm{k}}^*\eta_{\mathrm{k}} P_{\mathrm{k}})_{\mathrm{k}\in\mathcal{K}}$.
	\end{theorem}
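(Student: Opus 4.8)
The plan is to follow how each ingredient of $\mathcal{F}$ transforms under the simultaneous replacement $\Psi_{\mathrm{k}}\mapsto\Psi_{\mathrm{k}}P_{\mathrm{k}}$ and $\eta_{\mathrm{k}}\mapsto\tilde{\eta}_{\mathrm{k}}\coloneqq P_{\mathrm{k}}^*(\eta_{\mathrm{k}}+cI_N)P_{\mathrm{k}}$, and to verify term by term that the value is unchanged. The workhorse is the functional calculus identity $g(P_{\mathrm{k}}^*AP_{\mathrm{k}})=P_{\mathrm{k}}^*g(A)P_{\mathrm{k}}$, valid for any Hermitian $A$, unitary $P_{\mathrm{k}}$ and analytic $g$ (so it applies to $f$ and $S$ by \ref{asp:fS1}), together with cyclicity of the trace and $P_{\mathrm{k}}P_{\mathrm{k}}^*=P_{\mathrm{k}}^*P_{\mathrm{k}}=I_N$.

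First I would settle the chemical potential. Let $\tilde{\mu}$ denote the value of $\mu$ associated with $\tilde{\eta}$. For any $\nu\in\mathbb{R}$ one has $\frac1\sigma(\tilde{\eta}_{\mathrm{k}}-\nu I_N)=P_{\mathrm{k}}^*\bigl(\frac1\sigma(\eta_{\mathrm{k}}-(\nu-c)I_N)\bigr)P_{\mathrm{k}}$, so $\operatorname{tr}f\bigl(\frac1\sigma(\tilde{\eta}_{\mathrm{k}}-\nu I_N)\bigr)=\operatorname{tr}f\bigl(\frac1\sigma(\eta_{\mathrm{k}}-(\nu-c)I_N)\bigr)$; hence $\nu=\mu+c$ solves $\sum_{\mathrm{k}}w_{\mathrm{k}}\operatorname{tr}F_{\tilde{\eta}_{\mathrm{k}}}=N_e$, and when $f$ is strictly decreasing (\ref{asp:fS4}) it is the only solution, while in general we take it as the defining value $\tilde{\mu}=\mu+c$. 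Then $\frac1\sigma(\tilde{\eta}_{\mathrm{k}}-\tilde{\mu}I_N)=P_{\mathrm{k}}^*\bigl(\frac1\sigma(\eta_{\mathrm{k}}-\mu I_N)\bigr)P_{\mathrm{k}}$, so the functional calculus identity gives $F_{\tilde{\eta}_{\mathrm{k}}}=P_{\mathrm{k}}^*F_{\eta_{\mathrm{k}}}P_{\mathrm{k}}$ and $S\bigl(\frac1\sigma(\tilde{\eta}_{\mathrm{k}}-\tilde{\mu}I_N)\bigr)=P_{\mathrm{k}}^*S\bigl(\frac1\sigma(\eta_{\mathrm{k}}-\mu I_N)\bigr)P_{\mathrm{k}}$; taking traces, the entropy term $-\sigma\sum_{\mathrm{k}}w_{\mathrm{k}}\operatorname{tr}S(\cdots)$ is unchanged.

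Next I would record the elementary conjugation rules $(\Psi_{\mathrm{k}}P_{\mathrm{k}})^*(\Psi_{\mathrm{k}}P_{\mathrm{k}})=P_{\mathrm{k}}^*(\Psi_{\mathrm{k}}^*\Psi_{\mathrm{k}})P_{\mathrm{k}}$, $\inner{(\Psi_{\mathrm{k}}P_{\mathrm{k}})^*M}=P_{\mathrm{k}}^*\inner{\Psi_{\mathrm{k}}^*M}$, $\inner{M^*(\Psi_{\mathrm{k}}P_{\mathrm{k}})}=\inner{M^*\Psi_{\mathrm{k}}}P_{\mathrm{k}}$, and $A(\Psi_{\mathrm{k}}P_{\mathrm{k}})=(A\Psi_{\mathrm{k}})P_{\mathrm{k}}$ for any linear operator $A$ — in particular for $-\frac12(\mathrm{i}\mathrm{k}+\nabla)^2$ and for $V_{\text{nl}}$, which is linear in $\Psi_{\mathrm{k}}$. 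Combining these with $F_{\tilde{\eta}_{\mathrm{k}}}=P_{\mathrm{k}}^*F_{\eta_{\mathrm{k}}}P_{\mathrm{k}}$ and cyclicity of the trace yields $\operatorname{tr}\bigl((\Psi_{\mathrm{k}}P_{\mathrm{k}})^*(\Psi_{\mathrm{k}}P_{\mathrm{k}})F_{\tilde{\eta}_{\mathrm{k}}}\bigr)=\operatorname{tr}(\Psi_{\mathrm{k}}^*\Psi_{\mathrm{k}}F_{\eta_{\mathrm{k}}})$ pointwise, and likewise for the augmentation term $\inner{\Psi_{\mathrm{k}}^*M}\mathcal{Q}\inner{M^*\Psi_{\mathrm{k}}}$ and for $\inner{\Psi_{\mathrm{k}}^*(-\frac12(\mathrm{i}\mathrm{k}+\nabla)^2+V_{\text{nl}})\Psi_{\mathrm{k}}}$. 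In particular $\rho_{\Psi P,\tilde{\eta}}=\rho_{\Psi,\eta}$, so the local-potential, Hartree and exchange-correlation parts of $\mathcal{E}$ — which depend on $(\Psi,\eta)$ only through $\rho$ — are unchanged, while the kinetic-plus-nonlocal part is unchanged by the trace computation just described. Adding up gives $\mathcal{E}(\Psi P,\tilde{\eta})=\mathcal{E}(\Psi,\eta)$, and together with the invariance of the entropy term this proves \eqref{eq:shift-unitarily-invariant}. The only genuinely delicate point is the chemical-potential step: one must notice that $\operatorname{tr}F_{\eta_{\mathrm{k}}}$ sees $\eta_{\mathrm{k}}$ only through its spectrum (shifted by $c$ in $\tilde{\eta}_{\mathrm{k}}$) so that the equation defining $\mu$ transforms covariantly; everything else is bookkeeping with unitary conjugation and the trace.
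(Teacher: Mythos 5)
Your proof is correct and uses essentially the same ingredients as the paper's (functional-calculus conjugation, covariance of $\mu$ and $F_{\eta_{\mathrm{k}}}$, invariance of $\rho$, cyclicity of the trace); the only difference is organizational. The paper splits the argument into two steps — shift invariance $\mathcal{F}(\Psi,\eta+cI_N)=\mathcal{F}(\Psi,\eta)$ and a transfer identity $\mathcal{F}(\Psi P,\eta)=\mathcal{F}(\Psi,P\eta P^*)$ combined via $\mathcal{F}(\Psi P,P^*\eta P)=\mathcal{F}(\Psi,P(P^*\eta P)P^*)$ — while you verify the combined transformation $(\Psi,\eta)\mapsto(\Psi P,P^*(\eta+cI_N)P)$ directly in one pass, which is perhaps slightly cleaner but not a genuinely different method.
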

	\begin{proof}
		It is sufficient to prove that
		\begin{gather}
			\mathcal{F}(\Psi,\eta+cI_N)=\mathcal{F}(\Psi,\eta),\label{eq:shift-invariant}\\
			\mathcal{F}(\Psi P,P^*\eta P)=\mathcal{F}(\Psi,\eta)\label{eq:unitarily-invariant}
		\end{gather}
		hold true for any $c\in\mathbb{R}$, $(\Psi,\eta)\in ((H_\#^1(\Omega,\mathbb{C}))^{N})^{|\mathcal{K}|}\times\left(\mathcal{S}_{\mathbb{C}}^{N\times N}\right)^{|\mathcal{K}|}$ and $P\in\left(\mathcal{O}^{N\times N}_{\mathbb{C}}\right)^{|\mathcal{K}|}$.
	
		We first prove the equation \eqref{eq:shift-invariant}. By the uniqueness of $\mu$ that $\sum\limits_{\mathrm{k}\in\mathcal{K}}w_{\mathrm{k}}\operatorname{tr}F_{\eta_{\mathrm{k}}}=N_e$, we obtain $\mu(\eta+cI_N)=\mu(\eta)+c$ for any $c\in\mathbb{R}$. Thus, we have $(F_{\eta_{\mathrm{k}}+cI_N})_{\mathrm{k}\in\mathcal{K}}=(F_{\eta_{\mathrm{k}}})_{\mathrm{k}\in\mathcal{K}}$ and
		\begin{equation*}
			\left(S\left(\frac{1}{\sigma}(\eta_{\mathrm{k}}+cI_N-\mu(\eta+cI_N) I_N)\right)\right)_{\mathrm{k}\in\mathcal{K}}=\left(S\left(\frac{1}{\sigma}(\eta_{\mathrm{k}}-\mu(\eta) I_N)\right)\right)_{\mathrm{k}\in\mathcal{K}},
		\end{equation*}
		which lead to $\rho_{\Psi,\eta+cI_N}=\rho_{\Psi,\eta}$ and arrive at \eqref{eq:shift-invariant}.
		
		Next we prove the equation \eqref{eq:unitarily-invariant}. Since $f$ and $S$ are analytic on $\mathbb{R}$, we have
		\[
		P_{\mathrm{k}}f(\eta_{\mathrm{k}})P_{\mathrm{k}}^*=f(P_{\mathrm{k}}\eta_{\mathrm{k}} P_{\mathrm{k}}^*),~P_{\mathrm{k}} S(\eta_{\mathrm{k}})P^*_{\mathrm{k}}=S(P_{\mathrm{k}}\eta_{\mathrm{k}} P^*_{\mathrm{k}}).
		\]
		By the uniqueness of $\mu$ that $\sum\limits_{\mathrm{k}\in\mathcal{K}}w_{\mathrm{k}}\operatorname{tr}F_{\eta_{\mathrm{k}}}=N_e$, we get $\mu(P^*\eta P)=\mu(\eta)$ for any $P\in(\mathcal{O}_{\mathbb{C}}^{N\times N})^{\mathcal{K}}$. Note that 
		\begin{align*}
			\rho_{\Psi P,\eta}&=\sum_{\mathrm{k}\in\mathcal{K}}w_{\mathrm{k}}\operatorname{tr}(P_{\mathrm{k}}^*(\Psi^*_{\mathrm{k}}\Psi_{\mathrm{k}}+\langle\Psi^*_{\mathrm{k}} M\rangle\mathcal{Q}\langle M^*\Psi_{\mathrm{k}}\rangle)P_{\mathrm{k}} F_{\eta_{\mathrm{k}}})\\
			&=\sum_{\mathrm{k}\in\mathcal{K}}w_{\mathrm{k}}\operatorname{tr}((\Psi^*_{\mathrm{k}}\Psi_{\mathrm{k}}+\langle\Psi^*_{\mathrm{k}} M\rangle\mathcal{Q}\langle M^*\Psi_{\mathrm{k}}\rangle) F_{P_{\mathrm{k}}\eta_{\mathrm{k}}P_{\mathrm{k}}^*})\\
			&=\rho_{\Psi,P\eta P^*}.
		\end{align*}
		We have
		\begin{align*}
			\mathcal{F}(\Psi P,\eta)&=\sum_{\mathrm{k}\in\mathcal{K}}w_{\mathrm{k}}\operatorname{tr}\left(\left\langle (\Psi_{\mathrm{k}} P_{\mathrm{k}})^*\left(-\frac12\Delta+V_{\textup{nl}}\right)(\Psi_{\mathrm{k}} P_{\mathrm{k}})\right\rangle F_{\eta_{\mathrm{k}}}\right)\\
			&\quad+\int_{\Omega} V_\text{loc}(r)\rho_{\Psi P,\eta}(r) dr+\mathcal{E}_{\text{HXC}}(\rho_{\Psi P,\eta})-\sigma\sum_{\mathrm{k}\in\mathcal{K}}w_{\mathrm{k}}\operatorname{tr} P_{\mathrm{k}} S\left(\frac{1}{\sigma}(\eta_{\mathrm{k}}-\mu I)\right)P_{\mathrm{k}}^*\\
			&=\sum_{\mathrm{k}\in\mathcal{K}}w_{\mathrm{k}}\operatorname{tr}\left(\left\langle \Psi_{\mathrm{k}}^*\left(-\frac12(i\mathrm{k}+\nabla)^2+V_{\textup{nl}}\right)\Psi_{\mathrm{k}}\right\rangle F_{P_{\mathrm{k}}\eta_{\mathrm{k}} P_{\mathrm{k}}^*}\right)\\
			&\quad+\int_{\mathbb{R}^3} V_\text{loc}(r)\rho_{\Psi,P\eta P^*}(r)dr+\mathcal{E}_{\text{HXC}}(\rho_{\Psi,P\eta P^*})-\sigma\sum_{\mathrm{k}\in\mathcal{K}}w_{\mathrm{k}}\operatorname{tr} S\left(\frac{1}{\sigma}(P_{\mathrm{k}}\eta_{\mathrm{k}}P_{\mathrm{k}}^*-\mu I)\right),
		\end{align*}
		where
		\[
		\mathcal{E}_{\textup{HXC}}(\rho_{\Psi,\eta})=\frac12\int_{\Omega}\int_{\Omega}\frac{\rho_{\Psi,\eta}(r)\rho_{\Psi,\eta}(r')}{|r-r'|}\textup{d}r \textup{d}r'+\mathcal{E}_{\text{xc}}(\rho_{\Psi,\eta}),
		\]
		namely,
		\begin{equation}\label{eq:U-tran}
			\mathcal{F}(\Psi P,\eta)=\mathcal{F}(\Psi,P\eta P^*).
		\end{equation}
		Finally we obtain from \eqref{eq:U-tran} that
		\[
		\mathcal{F}(\Psi P,P^*\eta P)=\mathcal{F}(\Psi,P(P^*\eta P)P^*)=\mathcal{F}(\Psi,\eta).
		\]
	\end{proof}

	We may view \eqref{eq:shift-invariant} as the translation invariance and \eqref{eq:unitarily-invariant} as the quasi unitary invariance.
	
	We obtain from \eqref{eq:shift-unitarily-invariant} that
	\begin{equation}\label{eq:eta-equal-diag}
		\inf_{(\Psi,\eta)\in\left(\mathcal{M}_{\mathcal{B},\mathbb{C}}^{N}\right)^{|\mathcal{K}|}\times \left(\mathcal{D}^{N\times N}\right)^{|\mathcal{K}|}}\mathcal{F}({\Psi},\eta)=\inf_{(\Psi,\eta)\in\left(\mathcal{M}_{\mathcal{B},\mathbb{C}}^{N}\right)^{|\mathcal{K}|}\times \left(\mathcal{S}^{N\times N}_{\mathbb{C}}\right)^{|\mathcal{K}|}}\mathcal{F}({\Psi},\eta),
	\end{equation}
	where $\mathcal{D}^{N\times N}=\{A\in\mathbb{R}^{N\times N}:A\text{~is a diagonal matrix}\}$. We see that
	\[\inf\limits_{(\Psi,\eta)\in\left(\mathcal{M}_{\mathcal{B},\mathbb{C}}^{N}\right)^{|\mathcal{K}|}\times \left(\mathcal{D}^{N\times N}\right)^{|\mathcal{K}|}}\mathcal{F}({\Psi},\eta)
	\]
	is the original ensemble Kohn-Sham DFT model, which means that the model \eqref{opt-eta-sym} is equivalent to the original ensemble Kohn-Sham DFT model.
	
	We see from \eqref{eq:shift-unitarily-invariant} that the solution of \eqref{opt-eta-sym} is not unique. Thus we may turn to consider the following optimization problem
	\begin{equation}\label{optimisation-eta-hermitian-optimisation}
		\inf_{[{\Psi},\eta]\in\left(\mathcal{M}_{\mathcal{B},\mathbb{C}}^N\right)^{|\mathcal{K}|}\times\left(\mathcal{S}_{\mathbb{C}}^{N\times N}\right)^{|\mathcal{K}|}\big/\sim}\mathcal{F}({\Psi},\eta)
	\end{equation}
	which is equivalent to \eqref{opt-eta-sym}. Here $\sim$ denotes the equivalence relation defined as follows: $(\Psi,\eta)\sim(\Psi',\eta')$ if and only if there exist $P\in\left(\mathcal{O}^{N\times N}_{\mathbb{C}}\right)^{|\mathcal{K}|}$ and $c\in\mathbb{R}$ such that
	\[
		\begin{pmatrix}
			\Psi'&\\&\eta'
		\end{pmatrix}=\begin{pmatrix}
			1&\\ &P^*
		\end{pmatrix}\begin{pmatrix}
			\Psi&\\ &\eta+cI_N
		\end{pmatrix}\begin{pmatrix}
			P&\\ &P
		\end{pmatrix}.
	\]
	Therefore, the equivalence class $[\Psi,\eta]$ is
	\[
		[\Psi,\eta]=\{(\Psi P,P^*(\eta+cI_N) P):P\in\left(\mathcal{O}^{N\times N}_{\mathbb{C}}\right)^{|\mathcal{K}|},\,c\in\mathbb{R}\}.
	\]
	
	Let $P\in\left(\mathcal{O}^{N\times N}_{\mathbb{C}}\right)^{|\mathcal{K}|}$ and 
	\[
		\eta_{\mathrm{k}}=\operatorname{Diag}(\epsilon_{\mathrm{k}1} I_{N_{\mathrm{k}1}},\epsilon_{\mathrm{k}2} I_{N_{\mathrm{k}2}},\ldots,\epsilon_{\mathrm{k}d_{\mathrm{k}}} I_{N_{\mathrm{k}d_{\mathrm{k}}}})_{N\times N},~\forall \mathrm{k}\in\mathcal{K},
	\]
	then $(\Psi P,\eta)\sim(\Psi, \eta)$ if and only if  $P_{\mathrm{k}}$ has the same block structure with $\eta_{\mathrm{k}}$ for any $\mathrm{k}\in\mathcal{K}$
	\[
		P_{\mathrm{k}}=\operatorname{Diag}(P_{\mathrm{k}1},P_{\mathrm{k}2},\ldots,P_{\mathrm{k}d_{\mathrm{k}}})_{N\times N},~P_{\mathrm{k}i}\in\mathcal{O}^{N_{\mathrm{k}i}\times N_{\mathrm{k}i}}_{\mathbb{C}}.
	\]
	If $\eta=(I_N)_{k\in\mathcal{K}}$ is fixed, then $F_{\eta_{\mathrm{k}}}=I_N$ and $(\Psi P,\eta)\sim(\Psi, \eta)$ for any $P\in\left(\mathcal{O}^{N\times N}_{\mathbb{C}}\right)^{|\mathcal{K}|}$, i.e., the energy functional is unitarily invariant. It is nothing but the classical Kohn-Sham DFT model.
	
	Similarly, for the gradient of $\mathcal{F}$, we have the following theorem.
	\begin{theorem}\label{prop:grad}
		Given $c\in\mathbb{R}$, $(\Psi,\eta)\in ((H_\#^1(\Omega,\mathbb{C}))^{N})^{|\mathcal{K}|}\times\left(\mathcal{S}_{\mathbb{C}}^{N\times N}\right)^{|\mathcal{K}|}$, and $P\in\left(\mathcal{O}^{N\times N}_{\mathbb{C}}\right)^{|\mathcal{K}|}$.
		\begin{enumerate}
			\item There hold
			\begin{equation}\label{eq:grad-invariant}
				\begin{aligned}
					\mathcal{F}_{\Psi}(\Psi P,P^*(\eta+cI_N) P)&=\mathcal{F}_{\Psi}(\Psi,\eta)P,\\
					\nabla_{\Psi}\mathcal{F}(\Psi P,P^*(\eta+cI_N) P)&=\nabla_\Psi\mathcal{F}(\Psi,\eta)P,\\ \nabla_\eta\mathcal{F}(\Psi P,P^*(\eta+cI_N) P)&=P^*\nabla_\eta\mathcal{F}(\Psi,\eta)P;	
				\end{aligned}
			\end{equation}
			\item $\nabla_{\eta_{\mathrm{k}}}\mathcal{F}(\Psi,\eta)$ is Hermitian matrix for any $\mathrm{k}\in\mathcal{K}$;
			\item\label{prop:grad-trace0} $\sum_{\mathrm{k}\in\mathcal{K}}\operatorname{tr}\nabla_{\eta_{\mathrm{k}}}\mathcal{F}(\Psi,(\eta+cI_N))=0$.
		\end{enumerate}
	\end{theorem}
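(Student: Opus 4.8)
The strategy is to obtain all three statements from the scalar invariance identities \eqref{eq:shift-invariant}--\eqref{eq:unitarily-invariant}, using the explicit closed forms of the gradients where they are available and differentiating the scalar identities where they are not. For the two $\Psi$-identities in \eqref{eq:grad-invariant} I would substitute $(\Psi P,P^*(\eta+cI_N)P)$ directly into $\mathcal{F}_{\Psi_{\mathrm{k}}}(\Psi,\eta)=w_{\mathrm{k}}H_{\mathrm{k}}(\rho_{\Psi,\eta})\Psi_{\mathrm{k}}F_{\eta_{\mathrm{k}}}$ and into the defining expression for $\nabla_{\Psi_{\mathrm{k}}}\mathcal{F}$. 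Everything then collapses via three elementary facts, all already available from the proof of the preceding theorem or from the definitions: (i) $\rho_{\Psi P,P^*(\eta+cI_N)P}=\rho_{\Psi,\eta}$, hence $H_{\mathrm{k}}$ is unchanged, which follows by combining $\rho_{\Psi P,\zeta}=\rho_{\Psi,P\zeta P^*}$ with the shift-invariance of $\rho$ used in the proof of \eqref{eq:shift-invariant}; (ii) $F_{P_{\mathrm{k}}^*(\eta_{\mathrm{k}}+cI_N)P_{\mathrm{k}}}=P_{\mathrm{k}}^*F_{\eta_{\mathrm{k}}}P_{\mathrm{k}}$, from analyticity of $f$ (so conjugation commutes with $f$) together with $\mu(\eta+cI_N)=\mu(\eta)+c$ and $\mu(P^*\eta P)=\mu(\eta)$; and (iii) $\mathcal{B}(\Psi_{\mathrm{k}}P_{\mathrm{k}})=(\mathcal{B}\Psi_{\mathrm{k}})P_{\mathrm{k}}$ and $\langle(\Psi_{\mathrm{k}}P_{\mathrm{k}})^*A(\Psi_{\mathrm{k}}P_{\mathrm{k}})\rangle=P_{\mathrm{k}}^*\langle\Psi_{\mathrm{k}}^*A\Psi_{\mathrm{k}}\rangle P_{\mathrm{k}}$ for self-adjoint $A$. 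Plugging these in, every interior factor $P_{\mathrm{k}}P_{\mathrm{k}}^*=I_N$ cancels and a single $P_{\mathrm{k}}$ survives on the right, which is exactly the assertion.

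For the $\eta$-identity in \eqref{eq:grad-invariant} no closed form is available in general (the stated formula is diagonal-only), so instead I would differentiate the scalar identity $\mathcal{F}(\Psi P,P^*(\eta+cI_N)P)=\mathcal{F}(\Psi,\eta)$ with respect to $\eta$ at fixed $\Psi,P,c$. The inner map $\eta\mapsto P^*(\eta+cI_N)P$ is affine with linear part $\delta\eta\mapsto P^*\delta\eta P$, so the chain rule gives $\operatorname{tr}(\nabla_{\eta_{\mathrm{k}}}\mathcal{F}(\Psi P,P^*(\eta+cI_N)P)P_{\mathrm{k}}^*\delta\eta_{\mathrm{k}}P_{\mathrm{k}})=\operatorname{tr}(\nabla_{\eta_{\mathrm{k}}}\mathcal{F}(\Psi,\eta)\delta\eta_{\mathrm{k}})$ for every Hermitian $\delta\eta_{\mathrm{k}}$; cyclicity of the trace turns the left-hand side into $\operatorname{tr}(P_{\mathrm{k}}\nabla_{\eta_{\mathrm{k}}}\mathcal{F}(\Psi P,\cdot)P_{\mathrm{k}}^*\delta\eta_{\mathrm{k}})$, and since the trace form is nondegenerate on the space of Hermitian matrices (testing against a standard Hermitian basis) this forces $\nabla_{\eta_{\mathrm{k}}}\mathcal{F}(\Psi P,P^*(\eta+cI_N)P)=P_{\mathrm{k}}^*\nabla_{\eta_{\mathrm{k}}}\mathcal{F}(\Psi,\eta)P_{\mathrm{k}}$ with no Hermiticity of either side needed. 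This step uses differentiability of $\eta\mapsto\mu(\eta)$, which comes from the implicit function theorem applied to $\sum_{\mathrm{k}}w_{\mathrm{k}}\operatorname{tr}F_{\eta_{\mathrm{k}}}=N_e$ via $f'<0$, i.e.\ Assumption \ref{asp:fS4}, the same computation underlying the $\eta$-gradient formula in Appendix \ref{appx:gradient}.

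Part 2 then follows: for diagonal $\eta$ the stated formula shows the diagonal entries of $\nabla_{\eta_{\mathrm{k}}}\mathcal{F}$ are real, while each off-diagonal entry equals $w_{\mathrm{k}}\langle\psi_{\mathrm{k}i},H_{\mathrm{k}}(\rho)\psi_{\mathrm{k}j}\rangle\frac{f_{\mathrm{k}i}-f_{\mathrm{k}j}}{\epsilon_{\mathrm{k}i}-\epsilon_{\mathrm{k}j}}$, which is the conjugate of its transpose since $H_{\mathrm{k}}(\rho)$ is self-adjoint and the difference quotient is real and symmetric in $i,j$; for a general Hermitian $\eta_{\mathrm{k}}=Q_{\mathrm{k}}^*D_{\mathrm{k}}Q_{\mathrm{k}}$ with $D_{\mathrm{k}}$ real diagonal and $Q_{\mathrm{k}}$ unitary, the $\eta$-identity of part 1 (with $c=0$, $P=Q$, applied at $(\Psi Q^*,D)$) gives $\nabla_{\eta_{\mathrm{k}}}\mathcal{F}(\Psi,\eta)=Q_{\mathrm{k}}^*\nabla_{\eta_{\mathrm{k}}}\mathcal{F}(\Psi Q^*,D)Q_{\mathrm{k}}$, a unitary conjugate of a Hermitian matrix. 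Part 3 is obtained by differentiating the translation invariance \eqref{eq:shift-invariant}, $\mathcal{F}(\Psi,\eta+cI_N)=\mathcal{F}(\Psi,\eta)$, with respect to $c$: the right side is constant and the left-side derivative pairs $\nabla_\eta\mathcal{F}(\Psi,\eta+cI_N)$ with the direction $(I_N)_{\mathrm{k}\in\mathcal{K}}$, i.e.\ equals $\sum_{\mathrm{k}\in\mathcal{K}}\operatorname{tr}\nabla_{\eta_{\mathrm{k}}}\mathcal{F}(\Psi,\eta+cI_N)$, which must therefore vanish.

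The only genuinely delicate point is the $\eta$-gradient identity in part 1: because there is no closed form to substitute into, one must differentiate the scalar invariance, and the bookkeeping between the entrywise derivatives $\mathcal{F}_{\eta_{\mathrm{k}}}=(\partial\mathcal{F}/\partial\eta_{\mathrm{k}ij})$ with Hermiticity-constrained off-diagonal entries and the trace pairing used in the chain rule has to be handled consistently — exactly the convention already settled in Appendix \ref{appx:gradient}. Once that is in place, the remaining manipulations are routine algebra with unitary matrices.
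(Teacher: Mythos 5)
The paper does not actually give a proof of this theorem — it only precedes the statement with ``Similarly, for the gradient of $\mathcal{F}$,'' signalling that the same machinery used for the invariance of $\mathcal{F}$ itself is expected to carry over. Your proposal supplies that omitted argument, and it is correct. The ingredients you isolate — $\rho_{\Psi P,P^*(\eta+cI_N)P}=\rho_{\Psi,\eta}$, $F_{P_{\mathrm{k}}^*(\eta_{\mathrm{k}}+cI_N)P_{\mathrm{k}}}=P_{\mathrm{k}}^*F_{\eta_{\mathrm{k}}}P_{\mathrm{k}}$ via $\mu(\eta+cI_N)=\mu(\eta)+c$ and $\mu(P^*\eta P)=\mu(\eta)$, and the right-equivariance of $\mathcal{B}$ and of $\langle\cdot^*\,\cdot\rangle$ — do make the two $\Psi$-identities a matter of direct substitution, exactly as the ``Similarly'' suggests.

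Where you genuinely had to decide something is the $\eta$-gradient identity, and your choice to differentiate the scalar invariance $\mathcal{F}(\Psi P,P^*(\eta+cI_N)P)=\mathcal{F}(\Psi,\eta)$ rather than push the Appendix-formula through a conjugation is the cleaner route: the explicit $\mathcal{F}_{\eta_{\mathrm{k}}}$ formula in the paper is written only for diagonal $\eta_{\mathrm{k}}$ (with a hidden diagonalizer in the general case), whereas the chain-rule argument with the affine inner map $\eta\mapsto P^*(\eta+cI_N)P$, cyclicity of the trace, and nondegeneracy of $\operatorname{tr}(X\,\cdot)$ on $\mathcal{S}_{\mathbb{C}}^{N\times N}$ gives the identity in one stroke and with no Hermiticity hypothesis on either side, which is what lets you then derive Part 2 from Part 1 without circularity. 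Your reduction of Part 2 to the diagonal case via $\nabla_\eta\mathcal{F}(\Psi,\eta)=Q^*\nabla_\eta\mathcal{F}(\Psi Q^*,D)Q$ and your Part 3 argument (differentiating the translation invariance in $c$ and reading off $\sum_{\mathrm{k}}\operatorname{tr}\nabla_{\eta_{\mathrm{k}}}\mathcal{F}$ as the pairing against $(I_N)_{\mathrm{k}}$) are both sound. The one caveat you rightly flag — that the entrywise derivative $\mathcal{F}_{\eta_{\mathrm{k}}}$ must be interpreted so that $D\mathcal{F}[\delta\eta_{\mathrm{k}}]=\operatorname{tr}(\nabla_{\eta_{\mathrm{k}}}\mathcal{F}\,\delta\eta_{\mathrm{k}})$ on Hermitian increments, consistent with Appendix~\ref{appx:gradient} — is real but is a convention issue, not a gap.
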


	The first property tells us how to apply unitary transformations to $\Psi$, $\eta$ and the associated gradients consistently. The third property is the another description of the translation invariance of $\mathcal{F}$ with respect to $\eta$ and will be used in our convergence analysis.
	
	\subsection{Existence of the minimizer}\label{subsec:minimizer}
	In this subsection, we show the existence of the minimizer of the ensemble Kohn-Sham DFT model. We consider that the sampling of k-points is at $\Gamma$ point only, for which $\Psi$, $\eta$ and other corresponding functions and spaces are of real valued. For the general sampling $\mathcal{K}$, the existence of the minimizer of the ensemble Kohn-Sham DFT model is still open.
	
	Following \cite{chen2013numerical}, we assume that $\mathcal{E}_{\text{xc}}$ is of the form
	\[
	\mathcal{E}_{\text{xc}}(\rho)=\int_{\Omega}\mathcal{N}(\rho)(r)\textup{d}r
	\]
	and
	\begin{equation}\label{eq:N-prop}
		\mathcal{N}\in\mathscr{P}(3,(c_1,c_2))\,(c_1\ge 0) \text{~or~} \mathcal{N}\in\mathscr{P}(4/3,(c_1,c_2),
	\end{equation}
	where
	\[
	\mathscr{P}\left(p,\left(c_{1}, c_{2}\right)\right)=\left\{f: \exists a_{1}, a_{2} \in \mathbb{R} \text { such that } c_{1} t^{p}+a_{1} \leq f(t) \leq c_{2} t^{p}+a_{2} \quad \forall t \geq 0\right\}
	\]
	with $c_1\in\mathbb{R}$ and $p,c_2\in[0,\infty)$. We assume that there exists a constant $\alpha>0$ such that for any $\psi\in L^2_{\#}(\Omega)$, the following inequality holds:
	\begin{equation}\label{ineq:S-coer}
		(\psi, \mathcal{B}\psi) \ge \alpha \|\psi\|^2.
	\end{equation}
	We also assume that the assumptions \ref{asp:fS1}-\ref{asp:fS4} are satisfied. Let
	\[
	\mathscr{F}_{\textup{occ}}=\{F=\operatorname{Diag}(f_1,f_2,\ldots,f_N)\in\mathcal{D}^{N\times N}:2\sum_{i=1}^N f_i=N_e, f_i\in(0,1), i=1,2,\ldots,N\}.
	\]
	Obviously,
	\[
	\overline{\mathscr{F}}_{\textup{occ}}=\{F=\operatorname{Diag}(f_1,f_2,\ldots,f_N)\in\mathcal{D}^{N\times N}:2\sum_{i=1}^N f_i=N_e, f_i\in[0,1], i=1,2,\ldots,N\}.
	\]
	
	We first have the following lemma.
	\begin{lemma}\label{lem:energy-equal-occ}
		There holds
		\[
		\inf_{({\Psi},\eta)\in\mathcal{M}_{\mathcal{B}}^N\times\mathcal{S}^{N\times N}}\mathcal{F}({\Psi},\eta)=\inf_{(\Psi,F)\in\mathcal{M}_{\mathcal{B}}^N\times\mathscr{F}_{\textup{occ}}}\widetilde{\mathcal{F}}({\Psi},F),
		\]
		where $\widetilde{\mathcal{F}}(\Psi,F)=\widetilde{\mathcal{E}}(\Psi,F)-\sigma\operatorname{tr}(S\circ f^{-1})(F)$, 
		\begin{equation*}
			\widetilde{\mathcal{E}}(\Psi,F)=\operatorname{tr}\left(\left\langle \Psi^T\left(-\frac12\Delta+V_{\text{ext}}\right)\Psi\right\rangle F\right)+\mathcal{E}_{\textup{HXC}}(\tilde{\rho}_{\Psi,F})
		\end{equation*}
		with $\displaystyle\tilde{\rho}_{\Psi,F}=2\operatorname{tr}((\Psi^T\Psi+\langle\Psi^T M\rangle\mathcal{Q}\langle M^T\Psi\rangle) F)$. 
	\end{lemma}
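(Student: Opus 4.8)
The plan is to turn the minimization over symmetric pseudo-eigenvalue matrices $\eta$ into a minimization over diagonal occupation matrices $F$, the bridge being the smearing map $\eta\mapsto F_\eta$; reducing first to diagonal $\eta$ is what makes this legitimate, since $\mathscr{F}_{\textup{occ}}\subset\mathcal{D}^{N\times N}$. First I would observe that every $\eta\in\mathcal{S}^{N\times N}$ is diagonalized by a real orthogonal $U$, say $U^T\eta U=\operatorname{Diag}(\lambda_1,\dots,\lambda_N)$; the real-valued, single-k-point form of \eqref{eq:shift-unitarily-invariant} with $c=0$ gives $\mathcal{F}(\Psi U,U^T\eta U)=\mathcal{F}(\Psi,\eta)$ with $\Psi U\in\mathcal{M}_{\mathcal{B}}^N$, and letting $(\Psi,\eta)$ range over everything yields, by the same argument that gives \eqref{eq:eta-equal-diag}, $\inf_{\mathcal{M}_{\mathcal{B}}^N\times\mathcal{S}^{N\times N}}\mathcal{F}=\inf_{\mathcal{M}_{\mathcal{B}}^N\times\mathcal{D}^{N\times N}}\mathcal{F}$. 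So it suffices to compare the latter with $\inf_{\mathcal{M}_{\mathcal{B}}^N\times\mathscr{F}_{\textup{occ}}}\widetilde{\mathcal{F}}$.

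For diagonal $\eta=\operatorname{Diag}(\epsilon_1,\dots,\epsilon_N)$ I would use \ref{asp:fS1}, \ref{asp:fS2} and \ref{asp:fS4}: $f$ is continuous and strictly decreasing with limits $1$ at $-\infty$ and $0$ at $+\infty$, hence a bijection $\mathbb{R}\to(0,1)$ with a well-defined inverse $f^{-1}$, and (again by \ref{asp:fS4}) $\mu=\mu(\eta)$ is the unique real number with $\sum_{\mathrm{k}}w_{\mathrm{k}}\operatorname{tr}F_{\eta_{\mathrm{k}}}=N_e$. Setting $F\coloneqq F_\eta=\operatorname{Diag}(f_1,\dots,f_N)$ with $f_i=f((\epsilon_i-\mu)/\sigma)$, one has $f_i\in(0,1)$ and the constraint reads $2\sum_i f_i=N_e$, i.e. $F\in\mathscr{F}_{\textup{occ}}$. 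Then I would verify $\mathcal{F}(\Psi,\eta)=\widetilde{\mathcal{F}}(\Psi,F_\eta)$ term by term: $\mathcal{E}$ depends on $\eta$ only through $F_{\eta}$, so $\rho_{\Psi,\eta}=\tilde{\rho}_{\Psi,F_\eta}$ and $\mathcal{E}(\Psi,\eta)=\widetilde{\mathcal{E}}(\Psi,F_\eta)$ follow directly from the definitions (once the $\Gamma$-point weight $w_\Gamma$ is accounted for as in $\widetilde{\mathcal{E}}$), while for the entropy, using functional calculus on the diagonal $\eta$, $S(\tfrac{1}{\sigma}(\eta-\mu I_N))=(S\circ f^{-1})(F_\eta)$ because $f^{-1}(F_\eta)=\tfrac{1}{\sigma}(\eta-\mu I_N)$.

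These two steps give $\inf_{\mathcal{M}_{\mathcal{B}}^N\times\mathcal{D}^{N\times N}}\mathcal{F}\ge\inf_{\mathcal{M}_{\mathcal{B}}^N\times\mathscr{F}_{\textup{occ}}}\widetilde{\mathcal{F}}$, since each diagonal $\eta$ yields some $F_\eta\in\mathscr{F}_{\textup{occ}}$ with $\widetilde{\mathcal{F}}(\Psi,F_\eta)=\mathcal{F}(\Psi,\eta)$. For the reverse inequality I would show $\eta\mapsto F_\eta$ is onto $\mathscr{F}_{\textup{occ}}$: given $F=\operatorname{Diag}(f_1,\dots,f_N)\in\mathscr{F}_{\textup{occ}}$, set $\eta\coloneqq\sigma\operatorname{Diag}(f^{-1}(f_1),\dots,f^{-1}(f_N))$; then $2\sum_i f(\epsilon_i/\sigma)=2\sum_i f_i=N_e$ forces $\mu(\eta)=0$ by uniqueness, so $F_\eta=F$ and $\widetilde{\mathcal{F}}(\Psi,F)=\mathcal{F}(\Psi,\eta)\ge\inf_{\mathcal{M}_{\mathcal{B}}^N\times\mathcal{D}^{N\times N}}\mathcal{F}$ for every $\Psi\in\mathcal{M}_{\mathcal{B}}^N$; taking the infimum over $(\Psi,F)$ and combining with \eqref{eq:eta-equal-diag} gives the claim. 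I do not anticipate a serious obstacle; the two points needing care are invoking the correct real, single-k-point form of the invariance identity to descend to diagonal $\eta$, and using \ref{asp:fS2} and \ref{asp:fS4} together to ensure both that $f$ is invertible and that the occupation numbers it produces stay strictly inside $(0,1)$, so that $F_\eta$ lands in the open set $\mathscr{F}_{\textup{occ}}$ and not merely in $\overline{\mathscr{F}}_{\textup{occ}}$; the rest is a term-by-term identification of $\mathcal{F}$ and $\widetilde{\mathcal{F}}$.
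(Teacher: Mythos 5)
Your proposal is correct and follows essentially the same route as the paper's (very terse) proof: use \eqref{eq:eta-equal-diag} to restrict to diagonal $\eta$, then identify $\mathcal{F}(\Psi,\eta)$ with $\widetilde{\mathcal{F}}(\Psi,F_\eta)$ term by term. The one thing you add that the paper leaves implicit is the surjectivity of $\eta\mapsto F_\eta$ from $\mathcal{D}^{N\times N}$ onto $\mathscr{F}_{\textup{occ}}$ (via $\eta=\sigma\operatorname{Diag}(f^{-1}(f_1),\dots,f^{-1}(f_N))$ and the observation that $\mu(\eta)=0$), which is exactly what is needed for the reverse inequality, so your version is the more complete one.
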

	\begin{proof}
		Let $(\Psi,\eta)\in\mathcal{M}_{\mathcal{B}}^N\times\mathcal{D}^{N\times N}$. We have
		\begin{equation*}
			\mathcal{F}(\Psi,\eta)=\widetilde{\mathcal{F}}(\Psi,F_\eta),
		\end{equation*}
		which together with \eqref{eq:eta-equal-diag} yields the conclusion.
	\end{proof}
	
	Let $f(-\infty)=1,\,f(+\infty)=0$ and $S(-\infty)=\displaystyle\lim_{x\to-\infty} S(x),\,S(+\infty)=\displaystyle\lim_{x\to+\infty}S(x)$, then $f$ and $S$ are continuous on $[-\infty,+\infty]$ and $f([-\infty,\infty])=[0,1]$. Thus $S\circ f^{-1}$ is continuous on $[0,1]$. By Lemma \ref{lem:energy-equal-occ}, instead of $\inf\limits_{(\Psi,F)\in\mathcal{M}_{\mathcal{B}}^N\times\mathcal{S}^{N\times N}}\mathcal{F}({\Psi},\eta)$, we consider the following minimization problem
	\begin{equation}\label{eq:occ-closure}
		\inf_{(\Psi,F)\in\mathcal{M}_{\mathcal{B}}^N\times\overline{\mathscr{F}}_{\textup{occ}}}\widetilde{\mathcal{F}}({\Psi},F).
	\end{equation}
	We shall prove that $\widetilde{\mathcal{F}}$ does indeed have a minimizer on $\mathcal{M}_{\mathcal{B}}^N\times\overline{\mathscr{F}}_{occ}$. Let
	\begin{equation*}
		\widetilde{\widetilde{\mathcal{E}}}(\Psi)=\operatorname{tr}\left(\left\langle \Psi^T\left(-\frac12\Delta+V_{\text{ext}}\right)\Psi\right\rangle \right)+\frac12\int_{\mathbb{R}^3}\frac{\rho_{\Psi}(r)\rho_{\Psi}(r')}{|r-r'|}\textup{d}r\textup{d}r'+\mathcal{E}_{\text{xc}}(\rho_{\Psi}),
	\end{equation*}
	where $\rho_\Psi =2\operatorname{tr}(\Psi^T\Psi+\langle\Psi^T M \rangle\mathcal{Q}\langle M^T\Psi\rangle)$. Then we have
	\begin{equation}\label{eq:noocc.eq.occ}
		\widetilde{\widetilde{\mathcal{E}}}(\Psi F^{1/2})=\widetilde{\mathcal{E}}(\Psi,F), \forall (\Psi,F)\in\mathcal{M}_{\mathcal{B}}^N\times\overline{\mathscr{F}}_{\textup{occ}}.
	\end{equation}
	
	To prove $\widetilde{\mathcal{F}}$ has a minimizer on $\mathcal{M}_{\mathcal{B}}^N\times\overline{\mathscr{F}}_{\textup{occ}}$, we need the lower semi-continuity of $\widetilde{\widetilde{\mathcal{E}}}$ in the weak topology of $(H_{\#}^1(\Omega))^N$ (See, e.g., \cite{chen2013numerical,chen2010numerical}). 
	\begin{proposition}\label{prop:weakcvg}
		Suppose \eqref{eq:N-prop} holds. If $\Psi^{(n)}$ converges weakly to $\Psi$ in $(H_{\#}^1(\Omega))^N$, then
		\begin{equation*}
			\widetilde{\widetilde{\mathcal{E}}}(\Psi)\le\varliminf_{n\to\infty}\widetilde{\widetilde{\mathcal{E}}}(\Psi^{(n)}).
		\end{equation*}
	\end{proposition}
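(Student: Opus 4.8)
The plan is to split $\widetilde{\widetilde{\mathcal{E}}}$ into the kinetic part, which I will show is weakly lower semicontinuous by convexity, and the remaining lower–order contributions (the external potential terms, the Hartree term, and the exchange–correlation term), which I will handle by compactness. Throughout I will use that $H^1_\#(\Omega)$ embeds compactly into $L^p(\Omega)$ for every $p\in[1,6)$, so that from $\Psi^{(n)}\rightharpoonup\Psi$ in $(H^1_\#(\Omega))^N$ one may pass to a subsequence with $\Psi^{(n)}\to\Psi$ strongly in $(L^p(\Omega))^N$ for all such $p$ and almost everywhere on $\Omega$; since the assertion concerns a $\liminf$, it suffices to work along a subsequence realizing $\varliminf_n\widetilde{\widetilde{\mathcal{E}}}(\Psi^{(n)})$ and, within it, a further subsequence converging a.e.

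First I would treat the kinetic term: for $\mathcal{R}$-periodic functions $\operatorname{tr}\langle\Psi^T(-\tfrac12\Delta)\Psi\rangle=\tfrac12\sum_{i=1}^N\|\nabla\psi_i\|^2$, and since $\nabla\psi_i^{(n)}\rightharpoonup\nabla\psi_i$ in $(L^2(\Omega))^3$ and the $L^2$-norm is weakly lower semicontinuous, this term is weakly lower semicontinuous. For the external potential, the local part is controlled by $\int_\Omega V_{\textup{loc}}|\psi_i^{(n)}|^2\to\int_\Omega V_{\textup{loc}}|\psi_i|^2$, because $|\psi_i^{(n)}|^2\to|\psi_i|^2$ strongly in $L^2(\Omega)$ (from strong convergence in $L^4$) and $V_{\textup{loc}}\in L^2_\#(\Omega)$, while the nonlocal/PAW-type pieces reduce to the scalars $\langle\Psi^{(n)T}M\rangle$, which converge since $M$ is fixed and $\Psi^{(n)}\rightharpoonup\Psi$ weakly in $L^2$. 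These facts also give $\rho_{\Psi^{(n)}}\to\rho_\Psi$ strongly in $L^2(\Omega)$ and a.e., whence the Hartree term converges (it is a continuous quadratic form of the density under strong $L^2$-convergence, e.g.\ via its Fourier representation $\tfrac12\sum_{\mathrm{G}\neq 0}\tfrac{4\pi}{|\mathrm{G}|^2}|\hat\rho_{\mathrm{G}}|^2$).

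Finally, for the exchange–correlation term I would distinguish the two cases of \eqref{eq:N-prop}. When $\mathcal{N}\in\mathscr{P}(4/3,(c_1,c_2))$, the bound $|\mathcal{N}(t)|\lesssim 1+t^{4/3}$, together with $\rho_{\Psi^{(n)}}\to\rho_\Psi$ strongly in $L^{4/3}(\Omega)$ (on the bounded cell) and a.e., gives by continuity of the associated Nemytskii operator that $\mathcal{E}_{\textup{xc}}(\rho_{\Psi^{(n)}})\to\mathcal{E}_{\textup{xc}}(\rho_\Psi)$. When $\mathcal{N}\in\mathscr{P}(3,(c_1,c_2))$ with $c_1\ge0$, I no longer have strong $L^3$-convergence, so instead I would use that $\mathcal{N}(t)\ge c_1 t^{3}+a_1\ge a_1$ for $t\ge0$, so that $\mathcal{N}(\rho_{\Psi^{(n)}})-a_1\ge 0$ converges a.e.\ to $\mathcal{N}(\rho_\Psi)-a_1$; Fatou's lemma then yields $\mathcal{E}_{\textup{xc}}(\rho_\Psi)\le\varliminf_n\mathcal{E}_{\textup{xc}}(\rho_{\Psi^{(n)}})$. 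Summing the kinetic, potential, Hartree, and exchange–correlation contributions gives the claim. The main obstacle is exactly this last $t^3$ case: the density only converges weakly in $L^3$, so weak lower semicontinuity (rather than outright continuity) of the exchange–correlation term has to be extracted from the sign condition $c_1\ge0$ together with a.e.\ convergence via Fatou; a secondary technical point is verifying the integrability needed for the nonlocal density pieces and for the periodic Hartree kernel, which is routine given the assumed regularity of $M$, $\mathcal{Q}$, and $V_{\textup{loc}}$.
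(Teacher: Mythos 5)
The paper does not actually prove Proposition~\ref{prop:weakcvg}: it merely states it and defers to the references \cite{chen2013numerical,chen2010numerical}, so there is no in-text proof for you to match. Your blind proposal is, however, a complete and correct argument of exactly the type those references give. You decompose $\widetilde{\widetilde{\mathcal{E}}}$ into the kinetic part (weakly lower semicontinuous because it equals $\tfrac12\sum_i\|\nabla\psi_i\|^2$ and the $L^2$-norm is weakly l.s.c.), the external/nonlocal potential terms and the Hartree term (which converge outright, using compact embedding of $H^1_\#$ into $L^p$ for $p<6$, the fixed $L^2$ data $V_{\textup{loc}}$, $M$, $\mathcal{Q}$, and the uniform bound $|\mathrm{G}|^{-2}\le C$ for $\mathrm{G}\ne 0$ in the Fourier form of the periodic Coulomb kernel), and the exchange--correlation term. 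You correctly identify the only genuinely delicate point: under $\mathcal{N}\in\mathscr{P}(3,(c_1,c_2))$ one only controls $\rho$ weakly in $L^3$, and lower semicontinuity is extracted from $c_1\ge 0$, a.e.\ convergence of the density along a subsequence, and Fatou; the $\mathscr{P}(4/3,\cdot)$ case is handled by Nemytskii continuity, where no sign condition on $c_1$ is needed. The reduction to a subsequence realizing $\varliminf_n\widetilde{\widetilde{\mathcal{E}}}(\Psi^{(n)})$, and within it a further subsequence converging a.e., is exactly the right preparatory move. Two small presentational points worth making explicit if you write this up: $\mathcal{N}$ must be (and implicitly is, by the differentiability assumption on $\mathcal{E}_{\textup{xc}}$) continuous for the a.e.\ passage through $\mathcal{N}$ and for the Nemytskii argument; and the final summation uses $\varliminf(a_n+b_n)\ge\varliminf a_n+\lim b_n$ for the convergent pieces, which is what lets the term-wise bounds add up. Also note that no subsequence is needed for the strong $L^p$ convergence (compact embedding already upgrades weak $H^1$ to strong $L^p$ for the whole sequence); the subsequence is only for pointwise a.e.\ convergence.
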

	
	Using \eqref{ineq:S-coer}, Jensen's inequality and the similar arguments in \cite{chen2010numerical}, we get that $\widetilde{\mathcal{E}}(\Psi,F)$ is bounded below over $\mathcal{M}_{\mathcal{B}}^N\times\overline{\mathscr{F}}_{\text{occ}}$.
	\begin{proposition}\label{prop:boundedbelow}
		If \eqref{eq:N-prop} and \eqref{ineq:S-coer} hold, then there exist constants $C>0$ and $b>0$ such that
		\begin{equation*}
			\widetilde{\mathcal{E}}(\Psi,F)\ge C^{-1}\sum_{i=1}^N \|\Psi F^{1/2}\|_{H_{\#}^1}^2-b\quad\forall (\Psi,F)\in\mathcal{M}_{\mathcal{B}}^N\times\overline{\mathscr{F}}_{\textup{occ}}.
		\end{equation*}
	\end{proposition}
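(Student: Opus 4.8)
The plan is to bound $\widetilde{\mathcal{E}}(\Psi,F)$ from below term by term, exactly along the lines of \cite{chen2010numerical}, using the identity \eqref{eq:noocc.eq.occ} to transfer the problem to the single-variable functional $\widetilde{\widetilde{\mathcal{E}}}(\Psi F^{1/2})$. First I would observe that, since $F\in\overline{\mathscr{F}}_{\textup{occ}}$ has diagonal entries in $[0,1]$ summing to $N_e/2$, the columns of $\Phi\coloneqq\Psi F^{1/2}$ satisfy $\|\varphi_i\|^2_{L^2}=f_i\le 1$ and $\sum_i\|\varphi_i\|^2_{L^2}=N_e/2$; moreover $\Psi\in\mathcal{M}_{\mathcal{B}}^N$ together with the coercivity \eqref{ineq:S-coer} gives $(\psi_i,\mathcal{B}\psi_i)\ge\alpha\|\psi_i\|^2$, hence a uniform $L^2$ bound on the $\psi_i$ and thus on the $\varphi_i$. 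The density $\rho_\Psi=\rho_\Phi$ is a fixed nonnegative function with $\int_\Omega\rho_\Phi = N_e$, so the Hartree term is nonnegative and may simply be dropped.

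Next I would treat the kinetic term. Writing $\langle\Phi^T(-\tfrac12\Delta)\Phi\rangle$ in Fourier coordinates, $\operatorname{tr}\langle\Phi^T(-\tfrac12\Delta)\Phi\rangle=\tfrac12\sum_i\|\nabla\varphi_i\|^2_{L^2}$, which controls $\sum_i\|\varphi_i\|^2_{H^1_\#}$ once combined with the $L^2$ bound above; this is the positive quantity we want on the right-hand side. The external-potential term $\operatorname{tr}(\langle\Phi^T V_{\textup{ext}}\Phi\rangle)$ — which here consists of the local pseudopotential contribution $\int_\Omega V_{\textup{loc}}\rho_\Phi$ plus the nonlocal $V_{\textup{nl}}$ contribution — is handled as in \cite{chen2010numerical}: $V_{\textup{loc}}\in L^2_\#(\Omega)$ and $\rho_\Phi\in L^3(\Omega)$ (by Sobolev embedding $H^1_\#\hookrightarrow L^6$, so $\rho_\Phi$, being a sum of $|\varphi_i|^2$, lies in $L^3$), so by Hölder and an $\varepsilon$-Young inequality $|\int_\Omega V_{\textup{loc}}\rho_\Phi|\le\varepsilon\|\rho_\Phi\|_{L^3}+C_\varepsilon\le\varepsilon' \sum_i\|\varphi_i\|^2_{H^1_\#}+C_{\varepsilon'}$, absorbing a small multiple of the kinetic term; the nonlocal part is bounded since $M\in(L^2_\#)^K$ and $D\in\mathcal{S}^{K\times K}$ make $\Phi\mapsto\operatorname{tr}\langle\Phi^T V_{\textup{nl}}\Phi\rangle$ continuous on $(L^2_\#)^N$ with at most quadratic, hence $\varepsilon$-absorbable, growth. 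For $\mathcal{E}_{\textup{xc}}(\rho_\Phi)=\int_\Omega\mathcal{N}(\rho_\Phi)$, the lower bound $\mathcal{N}(t)\ge c_1 t^p+a_1$ from \eqref{eq:N-prop} gives $\mathcal{E}_{\textup{xc}}\ge c_1\int_\Omega\rho_\Phi^p + a_1|\Omega|$; when $c_1\ge0$ this is bounded below by a constant, and in the $p=4/3$ case with possibly negative $c_1$ one uses $\|\rho_\Phi\|_{L^{4/3}}\le C\|\rho_\Phi\|_{L^1}^\theta\|\rho_\Phi\|_{L^3}^{1-\theta}$ with $\|\rho_\Phi\|_{L^1}=N_e$ fixed, again absorbing a small power of $\|\rho_\Phi\|_{L^3}$ (hence of the kinetic term) and leaving a constant. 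Finally the entropy term $-\sigma\operatorname{tr}(S\circ f^{-1})(F)$ is bounded (above and below) by a constant because $S\circ f^{-1}$ is continuous on the compact interval $[0,1]$, as noted just before \eqref{eq:occ-closure}, and there are only $N$ diagonal entries.

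Collecting these estimates, one obtains $\widetilde{\mathcal{E}}(\Psi,F)=\widetilde{\widetilde{\mathcal{E}}}(\Phi)\ge (\tfrac12-\varepsilon')\sum_i\|\nabla\varphi_i\|^2_{L^2} - C'' \ge C^{-1}\sum_i\|\varphi_i\|^2_{H^1_\#} - b$ for suitable $C,b>0$, which is the claim since $\varphi_i$ is the $i$-th column of $\Psi F^{1/2}$. The main obstacle is the exchange–correlation term in the borderline case $\mathcal{N}\in\mathscr{P}(4/3,(c_1,c_2))$ with $c_1<0$: there one must be careful that the negative $L^{4/3}$-type contribution is genuinely dominated by a fractional power of the kinetic energy via interpolation against the fixed $L^1$ norm of the density, so that the absorption into the kinetic term leaves only an additive constant; every other term is either nonnegative, bounded by a constant, or at most quadratic in an $L^2$-bounded quantity and hence routine to absorb. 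This is exactly the estimate carried out in \cite{chen2010numerical}, and I would invoke those arguments rather than reproduce them.
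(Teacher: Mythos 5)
Your proposal is correct and matches the paper's approach, which is simply to invoke \eqref{ineq:S-coer} for a uniform $L^2$ bound on the columns of $\Psi F^{1/2}$ and then cite the term-by-term lower-bound estimates of \cite{chen2010numerical}, including the interpolation of $\rho_\Phi$ against its fixed $L^1$ norm that you describe for both the local-potential term and the $\mathcal{N}\in\mathscr{P}(4/3,(c_1,c_2))$ case. Two tiny slips are worth noting: $\|\varphi_i\|^2_{L^2}=f_i$ holds only if $\mathcal{B}=\mathcal{I}$ (what the coercivity actually gives is $\|\varphi_i\|^2\le f_i/\alpha$, which is what you in fact use), and the entropy term is not part of $\widetilde{\mathcal{E}}$, so it need not be bounded here.
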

	
	Finally, we obtain the existence of a minimizer for \eqref{eq:occ-closure}.
	\begin{theorem}
		If \eqref{eq:N-prop}, \eqref{ineq:S-coer} and the assumptions \ref{asp:fS1}-\ref{asp:fS4} hold, then there exits $(\Phi_*,F_*)\in\mathcal{M}_{\mathcal{B}}^N\times\overline{\mathscr{F}}_{\textup{occ}}$ such that 
		\begin{equation*}
			\widetilde{\mathcal{F}}(\Phi_*,F_*) = \inf\limits_{(\Psi,F)\in\mathcal{M}_{\mathcal{B}}^N\times\overline{\mathscr{F}}_{\textup{occ}}}\widetilde{\mathcal{F}}({\Psi},F).
		\end{equation*}
	\end{theorem}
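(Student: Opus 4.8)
The plan is to follow the direct method in the calculus of variations, minimizing $\widetilde{\mathcal{F}}$ over the product set $\mathcal{M}_{\mathcal{B}}^N\times\overline{\mathscr{F}}_{\textup{occ}}$. First I would take a minimizing sequence $(\Psi^{(n)},F^{(n)})\in\mathcal{M}_{\mathcal{B}}^N\times\overline{\mathscr{F}}_{\textup{occ}}$, so that $\widetilde{\mathcal{F}}(\Psi^{(n)},F^{(n)})\to m\coloneqq\inf\widetilde{\mathcal{F}}$. By Proposition~\ref{prop:boundedbelow} the infimum is finite, and the same proposition gives the coercivity estimate $\widetilde{\mathcal{E}}(\Psi^{(n)},F^{(n)})\ge C^{-1}\sum_{i=1}^N\|\Psi^{(n)}(F^{(n)})^{1/2}\|_{H_\#^1}^2-b$. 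Since $S\circ f^{-1}$ is continuous on the compact set $[0,1]$, the entropy term $-\sigma\operatorname{tr}(S\circ f^{-1})(F^{(n)})$ is uniformly bounded, so $\sum_i\|\Psi^{(n)}(F^{(n)})^{1/2}\|_{H_\#^1}^2$ is bounded along the sequence. Separately, $\overline{\mathscr{F}}_{\textup{occ}}$ is a closed bounded subset of the finite-dimensional space $\mathcal{D}^{N\times N}$, hence compact, so after passing to a subsequence $F^{(n)}\to F_*\in\overline{\mathscr{F}}_{\textup{occ}}$.

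The next step is to extract a weak limit for the orbitals. The bound on $\sum_i\|\Psi^{(n)}(F^{(n)})^{1/2}\|_{H_\#^1}^2$ does not immediately bound $\Psi^{(n)}$ in $(H_\#^1(\Omega))^N$ because some occupation numbers $f_i^{(n)}$ may degenerate to $0$; however, the constraint $\langle(\Psi^{(n)})^T\mathcal{B}\Psi^{(n)}\rangle=I_N$ together with \eqref{ineq:S-coer} forces $\alpha\|\psi_i^{(n)}\|^2\le(\psi_i^{(n)},\mathcal{B}\psi_i^{(n)})=1$, so $\Psi^{(n)}$ is bounded in $(L^2_\#(\Omega))^N$; and the kinetic part of $\widetilde{\mathcal{E}}$, being a sum of $f_i^{(n)}\|\nabla\psi_i^{(n)}\|^2$-type terms weighted by nonneg­ative $f_i^{(n)}$, need not control $\|\nabla\psi_i^{(n)}\|$ when $f_i^{(n)}\to0$. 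This is the main obstacle. I would handle it by using the invariance/block structure: on the indices where $f_i^{(n)}\to f_{*,i}>0$ we get a genuine $H_\#^1$ bound, extract a weakly convergent subsequence $\psi_i^{(n)}\rightharpoonup\psi_{*,i}$, and pass to the limit there; on the indices where $f_{*,i}=0$ the orbital contributes nothing to the density $\tilde\rho_{\Psi,F}$ nor to the kinetic energy, so one may simply replace those orbitals by any fixed completion making $\langle\Phi_*^T\mathcal{B}\Phi_*\rangle=I_N$ hold — this requires verifying that the $\mathcal{B}$-orthonormality of the surviving block can be completed to a full $\mathcal{B}$-orthonormal frame in $(H^1_\#(\Omega))^N$, which is routine. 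Care is needed to ensure the weak $L^2$-limits of the surviving orbitals actually satisfy the Gram constraint in the limit; weak convergence in $H^1_\#$ gives strong convergence in $L^2_\#$ by Rellich, so $\langle(\Psi^{(n)})^T\mathcal{B}\Psi^{(n)}\rangle\to\langle\Psi_*^T\mathcal{B}\Psi_*\rangle=I_N$ on that block, using boundedness of $\mathcal{B}$.

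Having produced the candidate minimizer $(\Phi_*,F_*)\in\mathcal{M}_{\mathcal{B}}^N\times\overline{\mathscr{F}}_{\textup{occ}}$, the final step is lower semicontinuity of $\widetilde{\mathcal{F}}$ along the subsequence. Using \eqref{eq:noocc.eq.occ}, write $\widetilde{\mathcal{E}}(\Psi^{(n)},F^{(n)})=\widetilde{\widetilde{\mathcal{E}}}(\Psi^{(n)}(F^{(n)})^{1/2})$; since $F^{(n)}\to F_*$ strongly and (on the surviving block) $\Psi^{(n)}\rightharpoonup\Psi_*$ weakly in $H^1_\#$, the products $\Psi^{(n)}(F^{(n)})^{1/2}$ converge weakly in $(H^1_\#(\Omega))^N$ to $\Phi_*F_*^{1/2}$, so Proposition~\ref{prop:weakcvg} gives $\widetilde{\widetilde{\mathcal{E}}}(\Phi_*F_*^{1/2})\le\varliminf_n\widetilde{\widetilde{\mathcal{E}}}(\Psi^{(n)}(F^{(n)})^{1/2})$, i.e. $\widetilde{\mathcal{E}}(\Phi_*,F_*)\le\varliminf_n\widetilde{\mathcal{E}}(\Psi^{(n)},F^{(n)})$. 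The entropy term $F\mapsto-\sigma\operatorname{tr}(S\circ f^{-1})(F)$ is continuous on $\overline{\mathscr{F}}_{\textup{occ}}$, hence converges; adding the two pieces yields $\widetilde{\mathcal{F}}(\Phi_*,F_*)\le\varliminf_n\widetilde{\mathcal{F}}(\Psi^{(n)},F^{(n)})=m$. Since $(\Phi_*,F_*)$ is admissible, $\widetilde{\mathcal{F}}(\Phi_*,F_*)\ge m$, so equality holds and $(\Phi_*,F_*)$ is the desired minimizer. The only delicate point, as noted, is the degenerate-occupation bookkeeping in the orbital compactness step; everything else is a standard direct-method assembly from the three preceding propositions.
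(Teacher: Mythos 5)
Your proof is correct, but it takes a genuinely different route from the paper's. You extract a limit $F_*$ of $F^{(n)}$ by finite-dimensional compactness of $\overline{\mathscr{F}}_{\textup{occ}}$, then split the orbital indices into those with $f_{*,i}>0$ (where the kinetic estimate gives a uniform $H^1_\#$ bound on $\psi_i^{(n)}$, so you can extract weak $H^1_\#$ limits and pass the $\mathcal{B}$-orthonormality to the limit via Rellich) and those with $f_{*,i}=0$ (where you complete the surviving block by any $\mathcal{B}$-orthonormal frame and note the weighted column tends to zero). The paper avoids the degenerate-occupation case split altogether: it extracts a weak $(H^1_\#(\Omega))^N$ limit of the weighted orbitals $\Psi^{(n)}(F^{(n)})^{1/2}$ directly (Kakutani), defines $F_*$ and $\Phi_*$ by factoring the limit $\Psi_*$, and feeds $\Psi^{(n)}(F^{(n)})^{1/2}\rightharpoonup\Psi_*$ directly into Proposition~\ref{prop:weakcvg}. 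Both are valid instances of the direct method built on Propositions~\ref{prop:weakcvg} and~\ref{prop:boundedbelow}; the paper's version is shorter because working with the products $\Psi^{(n)}(F^{(n)})^{1/2}$ makes the degenerating columns vanish automatically, while yours buys a more explicit identification of $F_*$ (from finite-dimensional compactness rather than through the $L^2$ Gram matrix of the weighted limit) at the cost of the block-completion bookkeeping that you correctly flag as the delicate point. Note that for your step (d) — identifying the weak $H^1_\#$ limit of the column $\psi_i^{(n)}(f_i^{(n)})^{1/2}$ on a degenerate index as $0$ — you need the two-step argument that the column is bounded in $H^1_\#$ (via Proposition~\ref{prop:boundedbelow}) and converges to $0$ strongly in $L^2_\#$ (via $\|\psi_i^{(n)}\|\le\alpha^{-1/2}$ and $f_i^{(n)}\to0$), so any weak $H^1_\#$ subsequential limit must coincide with the strong $L^2_\#$ limit, namely $0$; that reasoning is implicit in what you wrote but worth making explicit.
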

	\begin{proof}
		Let $\displaystyle \alpha = \inf_{(\Psi,F)\in\mathcal{M}_{\mathcal{B}}^N\times\overline{\mathscr{F}}_{occ}}\widetilde{\mathcal{F}}({\Psi},F)$. It follows from Proposition \ref{prop:boundedbelow} and $S([-\infty,+\infty])$ being bounded that $\alpha>-\infty$. It is clear that $\alpha<\infty$.
		
		Choose $\Psi^{(n)}=(\psi_1^{(n)},\ldots,\psi_N^{(n)})\in\mathcal{M}_{\mathcal{B}}^N$ and $F^{(n)}=\operatorname{Diag}(f_1^{(n)},\ldots,f_N^{(n)})\in\overline{\mathscr{F}}_{occ}$ such that 
		\begin{equation*}
			\lim_{n\to\infty}\widetilde{\mathcal{F}}(\Psi^{(n)},F^{(n)})=\alpha.
		\end{equation*}
		We then get from Proposition \ref{prop:boundedbelow} that $\Psi^{(n)}(F^{(n)})^{1/2}$ is uniformly bounded in $(H_{\#}^1(\Omega))^N$. We derive from Kakutani's Theorem (see Theorem 4.2 in page 132 of \cite{conway2007course}) that there exists a weakly convergent subsequence of $\Psi^{(n)}(F^{(n)})^{1/2}$ in $(H_{\#}^1(\Omega))^N$. Without loss of generality, let 
		\[
		\Psi^{(n)}(F^{(n)})^{1/2}\rightharpoonup \Psi_*=(\psi_{*,1},\ldots,\psi_{*,N})\quad \text{in } (H_{\#}^1(\Omega))^N,
		\]
		where $\Psi_*\in (H_{\#}^1(\Omega))^N$. Since $(H_{\#}^1(\Omega))^N$ is compactly embedded into $L^2_{\#}(\Omega)$, we see that $\Psi^{(n)}(F^{(n)})^{1/2}\to\Psi_*$ strongly in $L^2_{\#}(\Omega)$ as $n\to\infty$. Let $F_*=\langle\Psi_*^T\Psi_*\rangle$. We have
		\begin{equation}\label{cvg:occ}
			F^{(n)}=\langle(\Psi^{(n)}(F^{(n)})^{1/2})^T\Psi^{(n)}(F^{(n)})^{1/2}\rangle\to F_*,
		\end{equation}
		which shows $F_*\in\overline{\mathscr{F}}_{occ}$ and that there exists $\Phi_*\in\mathcal{M}_{\mathcal{B}}^N$ such that $\Phi_*F_*^{1/2}=\Psi_*$.  From \eqref{eq:noocc.eq.occ}, \eqref{cvg:occ}, and Proposition \ref{prop:weakcvg}, we obtain
		\begin{align*}
			\widetilde{\mathcal{F}}(\Phi_*,F_*)&=\widetilde{\widetilde{\mathcal{E}}}(\Psi_*(F_*)^{1/2})-\sigma\operatorname{tr}(S\circ f^{-1})(F_*)\\
			&\le \varliminf_{n\to\infty}\widetilde{\widetilde{\mathcal{E}}}(\Psi^{(n)}(F^{(n)})^{1/2})+\varliminf_{n\to\infty}\left(-\sigma\operatorname{tr}(S\circ f^{-1})(F^{(n)})\right)\\
			&\le \varliminf_{n\to\infty}\left(\widetilde{\widetilde{\mathcal{E}}}(\Psi^{(n)}(F^{(n)})^{1/2})-\sigma\operatorname{tr}(S\circ f^{-1})(F^{(n)})\right)\\
			&=\varliminf_{n\to\infty}\widetilde{\mathcal{F}}(\Phi^{(n)},F^{(n)})\\
			&=\alpha.
		\end{align*}
		This completes the proof.
	\end{proof}
	
	\section{Numerical approximations}\label{sec:numerical-approx}
	We apply the planewave method to discrete \eqref{opt-eta-sym}. For any $\mathrm{k}\in\mathcal{K}$, let
	\begin{equation*}
	V_{\mathrm{k},N_{G}}=\operatorname{span}\left\{e_{\mathrm{G}}:\mathrm{G} \in \mathcal{R}^*, \frac{1}{2}\left| \mathrm{k}+\mathrm{G}\right|^{2} \leq E_{\mathrm{cut}}\right\},
	\end{equation*}
	where $E_{\text{cut}}$ is a given cutoff energy, $N_G$ is the largest number of planewaves among $\mathrm{k}\in\mathcal{K}$. Consequently, a finite planewave discretization of the ensemble Kohn-Sham DFT minimization problem \eqref{opt-eta-sym} is as follows
	\begin{equation}\label{eq:d-opt-eta-sym}
		\inf_{({\Psi},\eta)\in\left(\prod\limits_{\mathrm{k}\in\mathcal{K}}\mathcal{M}_{\mathcal{B},\mathbb{C},\mathrm{k},N_G}^N\right)\times\left(\mathcal{S}_{\mathbb{C}}^{N\times N}\right)^{|\mathcal{K}|}}\mathcal{F}({\Psi},\eta),
	\end{equation}
	where $\prod$ is the Cartesian product and  $\mathcal{M}_{\mathcal{B},\mathbb{C},\mathrm{k},N_G}^N$ is the Stiefel manifold
	\[
	\mathcal{M}_{\mathcal{B},\mathbb{C},\mathrm{k},N_G}^N=\{{\Psi}\in (V_{\mathrm{k},N_{G}})^N:\langle\Psi^*\mathcal{B}\Psi\rangle=I_N\}.
	\]
	Since $\prod\limits_{\mathrm{k}\in\mathcal{K}}\mathcal{M}_{\mathcal{B},\mathbb{C},\mathrm{k},N_G}^N$ is compact for any finite sampling, we obtain the existence of a minimizer of the discrete problem \eqref{eq:d-opt-eta-sym} in the sense of section \ref{subsec:minimizer}. In addition, the invariance of the energy functional and its gradient in section \ref{subsec:invariance} also holds since $\prod\limits_{\mathrm{k}\in\mathcal{K}}\mathcal{M}_{\mathcal{B},\mathbb{C},\mathrm{k},N_G}^N\subset ((H_\#^1(\Omega,\mathbb{C}))^{N})^{|\mathcal{K}|}$.
	
	\subsection{Numerical method}\label{subsec:numerical-method}
	We understand that the line search method is widely used to solve a minimization problem, in which there are two main issues: a search direction and a step size. In our minimization problem \eqref{eq:d-opt-eta-sym}, we observe that the iterative behavior for $\Psi$ and $\eta$ may be different. Hence it is better to apply different step sizes for $\Psi$ and $\eta$ when we apply the line search method to solve the minimization problem \eqref{eq:d-opt-eta-sym}. Inspired by the adaptive step size strategy proposed in \cite{dai2020adaptive}, we propose an adaptive double step size strategy for the line search method.
	
	\subsubsection{Adaptive double step size strategy}\label{subsec:adap-step}
	An adaptive step size strategy is concluded as the following four steps \cite{dai2020adaptive}:
	\begin{center}
		\textbf{Initialize} $\rightarrow$ \textbf{Estimate} $\rightarrow$ \textbf{Judge} $\rightarrow$ \textbf{Improve}.
	\end{center}
	We suppose that the initial guess of the step sizes $(t_\Psi^{n,\text{initial}},t_\eta^{n,\text{initial}})$ at $n$-th iteration is given. Then we introduce the other three steps of our adaptive double step size strategy one by one.
	
	Let $D_{\Psi}^{(n)}=(D_{\Psi_{\mathrm{k}}}^{(n)})_{\mathrm{k}\in\mathcal{K}}\in \prod\limits_{\mathrm{k}\in\mathcal{K}}\mathcal{T}_{\Psi_{\mathrm{k}}} \mathcal{M}_{\mathcal{B},\mathbb{C},\mathrm{k},N_G}^N$, $D_{\eta}^{(n)}=(D_{\eta_{\mathrm{k}}}^{(n)})_{\mathrm{k}\in\mathcal{K}}\in\left(\mathcal{S}_{\mathbb{C}}^{N\times N}\right)^{|\mathcal{K}|}$. For the sake of convenience, omiting $\Psi^{(n)},\eta^{(n)},D_\Psi^{(n)}$ and $D_\eta^{(n)}$, we denote
	\[
	\mathcal{F}((\operatorname{ortho}(\Psi_{\mathrm{k}}^{(n)},D_{\Psi_{\mathrm{k}}}^{(n)},t_\Psi))_{\mathrm{k}\in\mathcal{K}},\eta^{(n)}+t_\eta D^{(n)}_\eta)\]
	by $\bar{\mathcal{F}}_n(t_\Psi,t_\eta)$, where $\operatorname{ortho}(\Psi_{\mathrm{k}}^{(n)},D_{\Psi_{\mathrm{k}}}^{(n)},t_\Psi)$ means one step from $\Psi^{(n)}_{\mathrm{k}}\in\mathcal{M}_{\mathcal{B},\mathbb{C},\mathrm{k},N_G}^N$ with the search direction $D^{(n)}_{\Psi_{\mathrm{k}}}$ and the step size $t_\Psi$ to the next point in $\mathcal{M}_{\mathcal{B},\mathbb{C},\mathrm{k},N_G}^N$. More introduction about $\operatorname{ortho}(\Psi_{\mathrm{k}}^{(n)},D_{\Psi_{\mathrm{k}}}^{(n)},t_\Psi)$ will be provided in section \ref{subsubsec:pcg}. By a simple calculation, we have
	$$\frac{\partial\bar{\mathcal{F}}_n}{\partial t_\Psi}(0,0)=2\operatorname{Re}\langle\mathcal{F}_{\Psi}(\Psi^{(n)},\eta^{(n)}),D_\Psi^{(n)}\rangle,\,\frac{\partial\bar{\mathcal{F}}_n}{\partial t_\eta}(0,0)=\operatorname{Re}\langle\nabla_\eta\mathcal{F}(\Psi^{(n)},\eta^{(n)}),D_\eta^{(n)}\rangle.$$
	We assume  $\inner{\left(D_{\Psi_{\mathrm{k}}}^{(n)}\right)^*\mathcal{B}\Psi_{\mathrm{k}}^{(n)}}=0$ for any $\mathrm{k}\in\mathcal{K}$ to ensure
	$$\frac{\partial\bar{\mathcal{F}}_n}{\partial t_\Psi}(0,0)=\operatorname{Re}\langle\nabla_\Psi\mathcal{F}(\Psi^{(n)},\eta^{(n)}),D_\Psi^{(n)}\rangle.$$
	We always assume that all search directions $D_{\Psi}^{(n)}$ and $D_{\eta}^{(n)}$ are descent directions, namely, 
	\begin{equation}\label{eq:descent}
		\frac{\partial\bar{\mathcal{F}}_n}{\partial t_\Psi}(0,0)\le 0,~\frac{\partial\bar{\mathcal{F}}_n}{\partial t_\eta}(0,0)\le 0,\quad n=0,1,2,\ldots,
	\end{equation}
	where $\frac{\partial\bar{\mathcal{F}}_n}{\partial t_\Psi}(0,0)=0$ if and only if $\nabla_\Psi\mathcal{F}(\Psi^{(n)},\eta^{(n)})=0$, and $\frac{\partial\bar{\mathcal{F}}_n}{\partial t_\eta}(0,0)=0$ if and only if $\nabla_\eta\mathcal{F}(\Psi^{(n)},\eta^{(n)})=0$. For simplicity, we always suppose  $\|\nabla_\Psi\mathcal{F}(\Psi^{(n)},\eta^{(n)})\|+\|\nabla_\eta\mathcal{F}(\Psi^{(n)},\eta^{(n)})\|_{sF}\ne 0$ in the adaptive double step size strategy, otherwise we have obtained the minimizer of the problem \eqref{eq:d-opt-eta-sym}.
	
	\textbf{Estimate.} The final step sizes are supposed to satisfy the following non-monotone condition:
	\begin{equation}\label{eq:armijo}
		\bar{\mathcal{F}}_n(t_\Psi^{(n)},t_\eta^{(n)})-\mathcal{C}_n\le\nu\left(t_\Psi^{(n)}\frac{\partial\bar{\mathcal{F}}_n}{\partial t_\Psi}(0,0)+t_\eta^{(n)}\frac{\partial\bar{\mathcal{F}}_n}{\partial t_\eta}(0,0)\right),\,n=0,1,2,\ldots,
	\end{equation}
	where $\nu\in(0,1)$ is a given parameter. Here $\mathcal{C}_n$ can be $\mathcal{F}(\Psi^{(n)},\eta^{(n)})$ or that introduced in \cite{zhang2004nonmonotone} as follows
	\begin{equation}\label{eq:Armijo-C}
		\begin{cases}
			\mathcal{C}_0=\mathcal{F}(\Psi^{(0)},\eta^{(0)}),\,Q_0=1,\\
			Q_n=\alpha Q_{n-1} + 1,\\
			\mathcal{C}_n=(\alpha Q_{n-1}\mathcal{C}_{n-1}+\mathcal{F}(\Psi^{(n)},\eta^{(n)}))/Q_n,
		\end{cases}
	\end{equation}
	where $\alpha\in[0,1)$  is a given parameter. We consider the approximation of the energy functional $\mathcal{F}$ around $(\Psi^{(n)},\eta^{(n)})$ as follows:
	\begin{equation}\label{eq:approx-Fn}
		\bar{\mathcal{F}}_n(t_\Psi,t_\eta)\approx \bar{\mathcal{F}}_n(0,0)+t_\Psi\frac{\partial\bar{\mathcal{F}}_n}{\partial t_\Psi}(0,0)+t_\eta\frac{\partial\bar{\mathcal{F}}_n}{\partial t_\eta}(0,0)+\frac12 c_{n,1} t_\Psi^2+\frac12 c_{n,2}t_\eta^2,
	\end{equation}
	where $c_{n,1},c_{n,2}\ge 0$ are approximations of the second derivatives, $c_{n,1}=0$ if and only if $\nabla_\Psi\mathcal{F}(\Psi^{(n)},\eta^{(n)})=0$, and $c_{n,2}=0$ if and only if $\nabla_\eta\mathcal{F}(\Psi^{(n)},\eta^{(n)})=0$. Replacing $\bar{\mathcal{F}}_n(t_\Psi^{(n)},t_\eta^{(n)})$ in \eqref{eq:armijo} by the right hand term of \eqref{eq:approx-Fn}, we obtain
	\begin{align*}
		&\mathrel{\phantom{=}}\bar{\mathcal{F}}_n(0,0)+t_\Psi\frac{\partial\bar{\mathcal{F}}_n}{\partial t_\Psi}(0,0)+t_\eta\frac{\partial\bar{\mathcal{F}}_n}{\partial t_\eta}(0,0)+\frac12 c_{n,1} t_\Psi^2+\frac12 c_{n,2}t_\eta^2-\mathcal{C}_n\\
		&\le\nu\left(t_\Psi\frac{\partial\bar{\mathcal{F}}_n}{\partial t_\Psi}(0,0)+t_\eta\frac{\partial\bar{\mathcal{F}}_n}{\partial t_\eta}(0,0)\right),
	\end{align*}
	or equivalently,
	\begin{equation*}
		\frac{\bar{\mathcal{F}}_n(0,0)+t_\Psi\frac{\partial\bar{\mathcal{F}}_n}{\partial t_\Psi}(0,0)+t_\eta\frac{\partial\bar{\mathcal{F}}_n}{\partial t_\eta}(0,0)+\frac12 c_{n,1} t_\Psi^2+\frac12 c_{n,2}t_\eta^2-\mathcal{C}_n}{t_\Psi\frac{\partial\bar{\mathcal{F}}_n}{\partial t_\Psi}(0,0)+t_\eta\frac{\partial\bar{\mathcal{F}}_n}{\partial t_\eta}(0,0)}\ge\nu.
	\end{equation*}
	Hence, we propose the following estimator
	\begin{equation}\label{eq:estimator}
		\zeta_n(t_\Psi,t_\eta)=\frac{\bar{\mathcal{F}}_n(0,0)+t_\Psi\frac{\partial\bar{\mathcal{F}}_n}{\partial t_\Psi}(0,0)+t_\eta\frac{\partial\bar{\mathcal{F}}_n}{\partial t_\eta}(0,0)+\frac12 c_{n,1} t_\Psi^2+\frac12 c_{n,2}t_\eta^2-\mathcal{C}_n}{t_\Psi\frac{\partial\bar{\mathcal{F}}_n}{\partial t_\Psi}(0,0)+t_\eta\frac{\partial\bar{\mathcal{F}}_n}{\partial t_\eta}(0,0)}
	\end{equation}
	to guide us whether to accept the step sizes or not at the $n$-th iteration.
	Since the estimator \eqref{eq:estimator} remains reliable only in a neighborhood of $(\Psi^{(n)},\eta^{(n)})$, it is reasonable to restrict $t_\Psi^{(n)}\|D_{\Psi}^{(n)}\|_{\infty}\le\theta_\Psi^{(n)}$ and $t_\eta^{(n)}\|D_\eta^{(n)}\|_{sF,\infty}\le\theta_\eta^{(n)}$ for some given small $\theta_\Psi^{(n)},\,\theta_\eta^{(n)}\in(0,1)$. Thus, we first set
	\begin{equation*}
		t_\Psi^{(n)}=\min\left(t_\Psi^{n,\text{initial}},\frac{\theta_\Psi^{(n)}}{\|D_{\Psi}^{(n)}\|_{\infty}}\right),~  t_{\eta}^{(n)}=\min\left(t_\eta^{n,\text{initial}},\frac{\theta_{\eta}^{(n)}}{\|D_{\eta}^{(n)}\|_{sF,\infty}}\right),
	\end{equation*}
	and then calculate the estimator $\zeta_n(t_\Psi^{(n)},t_\eta^{(n)})$.
	
	\textbf{Judge.} The estimator $\zeta_n(t_\Psi^{(n)},t_\eta^{(n)})$ is used to determine whether to accept the step sizes $(t_\Psi^{(n)},t_\eta^{(n)})$ or not. If $(t_\Psi^{(n)},t_\eta^{(n)})$ satisfies
	\begin{equation}\label{eq:accept}
		\zeta_n(t_\Psi^{(n)},t_\eta^{(n)})\ge\nu,
	\end{equation}
	then we accept this step sizes. Otherwise, $(t_\Psi^{(n)},t_\eta^{(n)})$ is to be improved.
	
	\textbf{Improve.} If $(t_\Psi^{(n)},t_\eta^{(n)})$ is not accepted , then we solve the minimizer of the approximation \eqref{eq:approx-Fn} of $\bar{\mathcal{F}}_n$ and set it to be the step size. Combining the restriction of approximation in the neighborhood of $(\Psi^{(n)},\eta^{(n)})$, we take
	\begin{equation}\label{eq:impro-step}
		\begin{aligned}
			t_\Psi^{(n)}&=\min\left(-\frac{1}{c_{n,1}}\frac{\partial\bar{\mathcal{F}}_n}{\partial t_\Psi}(0,0),\frac{\theta_\Psi^{(n)}}{\|D_{\Psi}^{(n)}\|_{\infty}}\right),\\ t_\eta^{(n)}&=\min\left(-\frac{1}{c_{n,2}}\frac{\partial\bar{\mathcal{F}}_n}{\partial t_\eta}(0,0),\frac{\theta_\eta^{(n)}}{\|D_{\eta}^{(n)}\|_{sF,\infty}}\right).
		\end{aligned}
	\end{equation}
	Here and hereafter, $-\frac{1}{c_{n,1}}\frac{\partial\bar{\mathcal{F}}_n}{\partial t_\Psi}(0,0)$ is replaced by $-\frac{1}{c_{n,2}}\frac{\partial\bar{\mathcal{F}}_n}{\partial t_\eta}(0,0)$ if $\nabla_\Psi\mathcal{F}(\Psi^{(n)},\eta^{(n)})=0$, and $-\frac{1}{c_{n,2}}\frac{\partial\bar{\mathcal{F}}_n}{\partial t_\eta}(0,0)$ is replaced by $-\frac{1}{c_{n,1}}\frac{\partial\bar{\mathcal{F}}_n}{\partial t_\Psi}(0,0)$ if $\nabla_\eta\mathcal{F}(\Psi^{(n)},\eta^{(n)})=0$.
	Note that we choose $\nu\in(0,1/2]$ to ensure that step sizes \eqref{eq:impro-step} satisfy \eqref{eq:accept}. To ensure the convergence of the iterations, we may do some adjustments on the above step sizes. More precisely, if
	\begin{equation}\label{eq:step-lowup-bounded}
		\underline{c}\le\frac{t_\eta^{(n)}}{t_\Psi^{(n)}}\le\bar{c}
	\end{equation}
	does not hold, we then reduce one of two step sizes to make them satisfy the above inequalities. Here $\bar{c}>1>\underline{c}>0$ are given constants.
	
	\begin{remark}\label{rmk:same-step}
		We can always choose $c_{n,1},c_{n,2}$ such that the minimizer of \eqref{eq:approx-Fn} satisfies $t_\Psi=t_\eta$, i.e.,
		$$-\frac{1}{c_{n,1}}\frac{\partial\bar{\mathcal{F}}_n}{\partial t_\Psi}(0,0)=-\frac{1}{c_{n,2}}\frac{\partial\bar{\mathcal{F}}_n}{\partial t_\eta}(0,0).$$
		In this case, the minimizer of \eqref{eq:approx-Fn} is also the minimizer of the following function
		\[
		\bar{\mathcal{F}}_n(0,0)+\left(\frac{\partial\bar{\mathcal{F}}_n}{\partial t}(0,0)+\frac{\partial\bar{\mathcal{F}}_n}{\partial t}(0,0)\right)t+\frac12 (c_{n,1} + c_{n,2}) t^2.
		\]
		Hence, the approximation of $\bar{\mathcal{F}}_n$ with the same step size $t_\Psi=t_\eta$ is a special case of the above discussion.
	\end{remark}

	We summarize the above process as Algorithm \ref{algo:adap-step}.
	\begin{algorithm}[!htbp]
		\caption{Adaptive double step size strategy}
		\label{algo:adap-step}
		\begin{algorithmic}[1]
			\renewcommand{\algorithmicrequire}{\textbf{Input:}}
			\REQUIRE $\Psi,\,\eta,\,D_\Psi,\,D_\eta,\,t_\Psi^{\text{initial}},\,t_\eta^{\text{initial}},\,t_\Psi^{\text{min}},\,t_\eta^{\text{min}},\,\nu,\,c_1,\,c_2,\,\theta_\Psi,\,\theta_\eta,\,\mathcal{C}$
			\STATE Set $$t_\Psi=\min\left(\max(t_\Psi^{\text{initial}},t_\Psi^{\text{min}}),\frac{\theta_\Psi}{\|D_{\Psi}\|_{\infty}}\right),$$ $$t_\eta=\min\left(\max(t_\eta^{\text{initial}},t_\eta^{\text{min}}),\frac{\theta_\eta}{\|D_{\eta}\|_{sF,\infty}}\right);$$
			\STATE Calculate the estimator
			\[
			\zeta(t_\Psi,t_\eta)=\frac{\bar{\mathcal{F}}(0,0)+t_\Psi\frac{\partial\bar{\mathcal{F}}}{\partial t_\Psi}(0,0)+t_\eta\frac{\partial\bar{\mathcal{F}}}{\partial t_\eta}(0,0)+\frac12 c_1 t_\Psi^2+\frac12 c_2 t_\eta^2-\mathcal{C}}{t_\Psi\frac{\partial\bar{\mathcal{F}}}{\partial t_\Psi}(0,0)+t_\eta\frac{\partial\bar{\mathcal{F}}}{\partial t_\eta}(0,0)},
			\]
			where $\bar{\mathcal{F}}(t_\Psi,t_\eta)=\mathcal{F}((\operatorname{ortho}(\Psi_{\mathrm{k}},D_{\Psi_{\mathrm{k}}},t_\Psi))_{\mathrm{k}\in\mathcal{K}},\eta+t_\eta D_\eta)$;
			\IF{$\zeta(t_\Psi,t_\eta)<\nu$}
			\STATE set
			\[
			\begin{aligned}
				t_\Psi&=\min\left(-\frac{1}{c_{1}}\frac{\partial\bar{\mathcal{F}}}{\partial t_\Psi}(0,0),\frac{\theta_\Psi}{\|D_{\Psi}\|_{\infty}}\right),\\
				t_\eta&=\min\left(-\frac{1}{c_{2}}\frac{\partial\bar{\mathcal{F}}}{\partial t_\eta}(0,0),\frac{\theta_\eta}{\|D_{\eta}\|_{sF,\infty}}\right);
			\end{aligned}
			\]
			\ENDIF
			\IF{$\displaystyle\frac{t_\eta}{t_\Psi}<\underline{c}$}
			\STATE $t_\Psi=\frac{1}{\underline{c}}t_\eta,~t_{\eta}=t_\eta$;
			\ELSIF{$\displaystyle\frac{t_\eta}{t_\Psi}>\bar{c}$}
			\STATE  $t_\Psi=t_\Psi,~t_{\eta}=\bar{c}t_\Psi$;
			\ENDIF
			\STATE Return $(t_\Psi,\,t_\eta)$.
		\end{algorithmic}
	\end{algorithm}

	Note that it is very difficult to calculate the second derivatives of $\bar{\mathcal{F}}_n(t_\Psi,t_\eta)$. Thus we design some strategies to get good approximations $c_{n,1}$ and $c_{n,2}$. We provide three strategies to get $c_{n,1}$ and $c_{n,2}$ by one trial step with step sizes $(t_\Psi^{\text{trial}},t_\eta^{\text{trial}})$. For convenience, we use the short notation
	\[
	\tilde{\mathcal{F}}_n(t_\Psi,t_\eta)=\bar{\mathcal{F}}_n(0,0)+t_\Psi\frac{\partial\bar{\mathcal{F}}_n}{\partial t_\Psi}(0,0)+t_\eta\frac{\partial\bar{\mathcal{F}}_n}{\partial t_\eta}(0,0)+\frac12 c_{n,1} t_\Psi^2+\frac12 c_{n,2}t_\eta^2.
	\]
	We shall also simply denote $\bar{\mathcal{F}}_n(t,t)$ and $\tilde{\mathcal{F}}_n(t,t)$ by $\bar{\mathcal{F}}_n(t)$ and $\tilde{\mathcal{F}}_n(t)$, respectively. In this case, $c_{n,1}+c_{n,2}$ are denoted by $c_n$, trial step sizes $t_\Psi^{\text{trial}}$ and $t_\eta^{\text{trial}}$ are denoted by $t^{\text{trial}}$.
	\begin{enumerate}[label=(S\arabic*)]
		\item\label{step-energy} Applying the same step size $t_\Psi=t_\eta$ for $\Psi$ and $\eta$, we use the energy at $t^{\text{trial}}$ to get the approximation $\tilde{\mathcal{F}}_n$, namely, $\tilde{\mathcal{F}}_n$ satisfies
		\begin{equation*}
			\tilde{\mathcal{F}}_n(t^{\text{trial}}) = \bar{\mathcal{F}}_n(t^{\text{trial}}),
		\end{equation*} 
		where  $$t^{\text{trial}}=\min\left(\max\left(t^{\text{min}},t^{(n-1)}\right),\frac{\theta^{(n)}}{\sqrt{\|D_\Psi^{(n)}\|_{\infty}^2+\|D_\eta^{(n)}\|_{sF,\infty}^2}}\right),$$
		$t^{\text{min}}$ and $\theta^{(n)}\in(0,1)$ are given parameters. Then we have
		\[ 
		c_n = \frac{2(\bar{\mathcal{F}}_n(t^{\text{trial}})-\bar{\mathcal{F}}_n(0)-t^{\text{trial}}\bar{\mathcal{F}}_n'(0))}{(t^{\text{trial}})^2}.
		\]
		We choose 
		\begin{equation*}
			t_\Psi^{(n)}=t_\eta^{(n)}=\begin{cases}
				\min\left(t_m^{(n)},\frac{\theta^{(n)}}{\sqrt{\|D_\Psi^{(n)}\|_{\infty}^2+\|D_\eta^{(n)}\|_{sF,\infty}^2}}\right),\quad&t_m^{(n)} > 0,\\
				t^{\text{trial}},&\text{otherwise},
			\end{cases}
		\end{equation*}
		where
		\[
		t_m^{(n)}=-\frac{\bar{\mathcal{F}}_n'(0)}{c_n}=-\frac{\bar{\mathcal{F}}_n'(0)(t^{\text{trial}})^2}{2(\bar{\mathcal{F}}_n(t^{\text{trial}})-\bar{\mathcal{F}}_n(0)-\bar{\mathcal{F}}_n'(0)t^{\text{trial}})}.
		\]
		
		\item\label{step-d1} Applying the same step size $t_\Psi=t_\eta$ for $\Psi$ and $\eta$, we use the derivative of $\bar{\mathcal{F}}_n(t)$ at $t^{\text{trial}}$ to get the approximation $\tilde{\mathcal{F}}_n$, namely, $\tilde{\mathcal{F}}_n$ satisfies
		\begin{equation*}
			\tilde{\mathcal{F}}_n'(t^{\text{trial}}) =\bar{\mathcal{F}}_n'(t^{\text{trial}}),
		\end{equation*}
		where $$t^{\text{trial}}=\min\left(\max(t^{\text{min}},t^{(n-1)}),\frac{\theta^{(n)}}{\sqrt{\|D_\Psi^{(n)}\|_{\infty}^2+\|D_\eta^{(n)}\|_{sF,\infty}^2}}\right),$$
		$t^{\text{min}}$ and $\theta^{(n)}\in(0,1)$ are given parameters. Then we have
		\[
		c_n = \frac{\bar{\mathcal{F}}_n'(t^{\text{trial}})-\bar{\mathcal{F}}_n'(0)}{t^{\text{trial}}}.
		\]
		We choose
		\begin{equation*}
			t_\Psi^{(n)}=t_\eta^{(n)}=\begin{cases}
				\min\left(t_m^{(n)},\frac{\theta^{(n)}}{\sqrt{\|D_\Psi^{(n)}\|_{\infty}^2+\|D_\eta^{(n)}\|_{sF,\infty}^2}}\right),\quad&t_m^{(n)} > 0,\\
				t^{\text{trial}},&\text{otherwise},
			\end{cases}
		\end{equation*}
		where
		\[
		t_m^{(n)}=-\frac{\bar{\mathcal{F}}_n'(0)}{c_n}=-\frac{\bar{\mathcal{F}}_n'(0)t^{\text{trial}}}{\bar{\mathcal{F}}_n'(t^{\text{trial}})-\mathcal{F}'_n(0)}.
		\]
		
		\item\label{step-d1-2} Applying different step sizes $t_\Psi\ne t_\eta$ for $\Psi$ and $\eta$, we use partial derivatives of $\bar{\mathcal{F}}_n(t_\Psi,t_\eta)$ at $(t^{\text{trial}}_\Psi,t^{\text{trial}}_\eta)$ to get the approximation $\tilde{\mathcal{F}}_n$, namely, $\tilde{\mathcal{F}}_n$ satisfies
		\begin{align*}
			\frac{\partial\tilde{\mathcal{F}}_n}{\partial t_\Psi}{(t^{\text{trial}}_\Psi,t^{\text{trial}}_\eta)}=\frac{\partial\bar{\mathcal{F}}_n}{\partial t_\Psi}{(t^{\text{trial}}_\Psi,t^{\text{trial}}_\eta)},\quad
			\frac{\partial\tilde{\mathcal{F}}_n}{\partial t_\eta}{(t^{\text{trial}}_\Psi,t^{\text{trial}}_\eta)} =\frac{\partial\bar{\mathcal{F}}_n}{\partial t_\eta}{(t^{\text{trial}}_\Psi,t^{\text{trial}}_\eta)},
		\end{align*}
		where
		\begin{gather*}
			t^{\text{trial}}_\Psi=\min\left(\max(t^{\text{min},\Psi},t_{\Psi}^{(n-1)}),\frac{\theta^{(n)}_{\Psi}}{\|D_\Psi^{(n)}\|_{\infty}}\right),\\
			t^{\text{trial}}_\eta=\min\left(\max(t^{\text{min},\eta},t_{\eta}^{(n-1)}),\frac{\theta^{(n)}_{\eta}}{\|D_\eta^{(n)}\|_{sF,\infty}}\right),
		\end{gather*}
		$(t_{\Psi}^{\text{min}},t_{\eta}^{\text{min}})$ and  $\theta_\Psi^{(n)},\theta_\eta^{(n)}\in(0,1)$ are given parameters. Then we have
		\begin{equation*}
			c_{n,1}=\frac{\frac{\partial\bar{\mathcal{F}}_n}{\partial t_\Psi}{(t^{\text{trial}}_\Psi,t^{\text{trial}}_\eta)}-\frac{\partial\bar{\mathcal{F}}_n}{\partial t_\Psi}{(0,0)}}{t_\Psi^{\text{trial}}},\quad c_{n,2}=\frac{\frac{\partial\bar{\mathcal{F}}_n}{\partial t_\eta}{(t^{\text{trial}}_\Psi,t^{\text{trial}}_\eta)}-\frac{\partial\bar{\mathcal{F}}_n}{\partial t_\eta}{(0,0)}}{t_\eta^{\text{trial}}}.
		\end{equation*}
		We choose
		\begin{equation*}
			\begin{cases}
				t_\Psi^{(n)}=\min\left(t_{m,\Psi}^{(n)},\frac{\theta^{(n)}_{\Psi}}{\|D_\Psi^{(n)}\|_{\infty}}\right),~ t_\eta^{(n)}=\min\left(t_{m,\eta}^{(n)},\frac{\theta^{(n)}_{\eta}}{\|D_\eta^{(n)}\|_{sF,\infty}}\right),&t_{m,\Psi}^{(n)}>0 \text{~and~}t_{m,\eta}^{(n)}>0,\\
				t_\Psi^{(n)}=t^{\text{trial}}_\Psi,~ t_\eta^{(n)}=t^{\text{trial}}_\eta,~&\text{otherwise},
			\end{cases}
		\end{equation*}
		where
		\begin{align*}
			t_{m,\Psi}^{(n)}&=-\frac{\frac{\partial\bar{\mathcal{F}}_n}{\partial t_\Psi}(0,0)}{c_{n,1}}=-\frac{\frac{\partial\bar{\mathcal{F}}_n}{\partial t_\Psi}{(0,0)}t^{\text{trial}}_\Psi}{\frac{\partial\bar{\mathcal{F}}_n}{\partial t_\Psi}{(t^{\text{trial}}_\Psi,t^{\text{trial}}_\eta)}-\frac{\partial\bar{\mathcal{F}}_n}{\partial t_\Psi}{(0,0)}},\\
			t_{m,\eta}^{(n)}&=-\frac{\frac{\partial\bar{\mathcal{F}}_n}{\partial t_\eta}(0,0)}{c_{n,2}}=-\frac{\frac{\partial\bar{\mathcal{F}}_n}{\partial t_\eta}(0,0)t^{\text{trial}}_\eta}{\frac{\partial\bar{\mathcal{F}}_n}{\partial t_\eta}{(t^{\text{trial}}_\Psi,t^{\text{trial}}_\eta)}-\frac{\partial\bar{\mathcal{F}}_n}{\partial t_\eta}{(0,0)}}.
		\end{align*}
	\end{enumerate}
	
	For strategies \ref{step-d1} and \ref{step-d1-2}, we need to calculate the following two partial derivatives
	$$\frac{\partial\bar{\mathcal{F}}_n}{\partial t_\Psi}{(t^{\text{trial}}_\Psi,t^{\text{trial}}_\eta)},~\frac{\partial\bar{\mathcal{F}}_n}{\partial t_\eta}{(t^{\text{trial}}_\Psi,t^{\text{trial}}_\eta)}.$$
	A direct calculation shows
	\begin{align*}
		&\mathrel{\phantom{=}}\frac{\partial\bar{\mathcal{F}}_n}{\partial t_\Psi}{(t^{\text{trial}}_\Psi,t^{\text{trial}}_\eta)}\\
		&=\inner{\mathcal{F}_{\Psi}((\operatorname{ortho}(\Psi^{(n)}_{\mathrm{k}},D^{(n)}_{\Psi_{\mathrm{k}}},t^{\text{trial}}_\Psi))_{\mathrm{k}\in\mathcal{K}},\eta^{(n)}+t^{\text{trial}}_\eta D_\eta^{(n)}),\left(\frac{\partial\operatorname{ortho}(\Psi^{(n)}_{\mathrm{k}},D^{(n)}_{{\Psi}_{\mathrm{k}}},t^{\text{trial}}_\Psi)}{\partial t}\right)_{\mathrm{k}\in\mathcal{K}}}.
	\end{align*}
	and
	\[
	\frac{\partial\bar{\mathcal{F}}_n}{\partial t_\eta}{(t^{\text{trial}}_\Psi,t^{\text{trial}}_\eta)}=\inner{\nabla_{\eta}\mathcal{F}((\operatorname{ortho}(\Psi^{(n)}_{\mathrm{k}},D^{(n)}_{\Psi_{\mathrm{k}}},t^{\text{trial}}_\Psi))_{\mathrm{k}\in\mathcal{K}},\eta^{(n)}+t^{\text{trial}}_\eta D_\eta^{(n)}), D_\eta^{(n)}}.
	\]
	We see that $\displaystyle\frac{\partial\operatorname{ortho}(\Psi^{(n)}_{\mathrm{k}},D_{\Psi_{\mathrm{k}}}^{(n)},t^{\text{trial}}_\Psi)}{\partial t}$ is very difficult to calculate. Instead, we apply the third order approximation 
	\begin{align*}
		\frac{\partial\operatorname{ortho}(\Psi^{(n)}_{\mathrm{k}},D_{\Psi_{\mathrm{k}}}^{(n)},t^{\text{trial}}_\Psi)}{\partial t}&\approx\frac{\partial\operatorname{ortho}(\Psi^{(n)}_{\mathrm{k}},D_{\Psi_{\mathrm{k}}}^{(n)},0)}{\partial t}+\frac{\partial^2\operatorname{ortho}(\Psi^{(n)}_{\mathrm{k}},D_{\Psi_{\mathrm{k}}}^{(n)},0)}{\partial t^2}t^{\text{trial}}_\Psi\\
		&\quad+\frac12\frac{\partial^3\operatorname{ortho}(\Psi^{(n)}_{\mathrm{k}},D_{\Psi_{\mathrm{k}}}^{(n)},0)}{\partial t^3}(t^{\text{trial}}_\Psi)^2
	\end{align*}
	in practice.
	
	\subsubsection{The preconditioned conjugate gradient method}\label{subsubsec:pcg}
	Now we introduce the preconditioned conjugate gradient method for solving the minimization problem \eqref{eq:d-opt-eta-sym}. The preconditioned conjugate gradient (PCG) method is a typical line search based optimization method. For the constrained optimization problem \eqref{eq:d-opt-eta-sym}, we usually need to keep each iteration point on the constrained manifold. Thus some unitarity preserving strategies are required. We then introduce the preconditioner, the conjugate gradient parameter and the unitarity preserving strategies one by one.
	
	We first introduce the preconditioner applied to $\nabla_{\Psi}\mathcal{F}$ and $\nabla_\eta\mathcal{F}$. Let $(\Psi,\eta)\in \left(\prod\limits_{\mathrm{k}\in\mathcal{K}}\mathcal{M}_{\mathcal{B},\mathbb{C},\mathrm{k},N_G}^N\right)\times\left(\mathcal{S}_{\mathbb{C}}^{N\times N}\right)^{|\mathcal{K}|}$, where all $\eta_{\mathrm{k}}$ are diagonal matrices. We consider a preconditioner in the form of $M_\Psi^{\eta}(\Phi)=(M_{\Psi_{\mathrm{k}}}^{\eta_{\mathrm{k}}}(\Phi_{\mathrm{k}}))_{\mathrm{k}\in\mathcal{K}}$
	for $\nabla_{\Psi}\mathcal{F}$, where
	\[
	M_{\Psi_{\mathrm{k}}}^{\eta_{\mathrm{k}}}(\Phi_{\mathrm{k}})= M_{\Psi_{\mathrm{k}}}\left(\frac{1}{2w_{\mathrm{k}}}\Phi_{\mathrm{k}}F_{\eta_{\mathrm{k}}}^{-1}\right)
	\]
	and $M_{\Psi_{\mathrm{k}}}:V_{\mathrm{k},N_G}\to V_{\mathrm{k},N_G}$ is a linear operator. In our numerical experiments, we apply the following preconditioner $M_{\Psi_{\mathrm{k}}}$ used in Quantum ESPRESSO \cite{quantum}
	$$[M_{\Psi_{\mathrm{k}}}]_{G,G'}=  \delta_{G,G'} \frac{1}{1+\frac{1}{2}|\mathrm{k}+G|^2+\sqrt{1+\left(\frac{1}{2}|\mathrm{k}+G|^2-1\right)^2}},$$
	which is independent of wavefunctions. We consider a preconditioner in the form of $M_{\eta}(A)=(M_{\eta_{\mathrm{k}}}(A_{\mathrm{k}}))_{\mathrm{k}\in\mathcal{K}}$ for $\nabla_{\eta}\mathcal{F}$, where $M_{\eta_{\mathrm{k}}}:\mathcal{S}_{\mathbb{C}}^{N\times N}\to\mathcal{S}_{\mathbb{C}}^{N\times N}$ is a linear operator defined by
	\begin{equation}\label{eq:Meta}
		\left(M_{\eta_{\mathrm{k}}}(A_{\mathrm{k}})\right)_{ij}=-A_{\mathrm{k}ij}\frac{1}{w_{\mathrm{k}}} \frac{\eta_{\mathrm{k}ii}-\eta_{\mathrm{k}jj}}{f_{\mathrm{k}j}-f_{\mathrm{k}i}},~\forall i,j=1,2,\ldots,N,~\forall \mathrm{k}\in\mathcal{K}.
	\end{equation}
	Here $f_{\mathrm{k}i}=f((\eta_{\mathrm{k}ii}-\mu)/\sigma)$ and
	\[
	\frac{f_{\mathrm{k}j}-f_{\mathrm{k}i}}{\eta_{\mathrm{k}ii}-\eta_{\mathrm{k}jj}}=\frac{1}{\sigma}f'\left(\frac{\eta_{\mathrm{k}ii}-\mu}{\sigma}\right)
	\]
	when $\eta_{\mathrm{k}ii}=\eta_{\mathrm{k}jj}$.
	
	Applying $M_{\Psi_{\mathrm{k}}}^{\eta_{\mathrm{k}}}$ to
	\[
	\nabla_{\Psi_{\mathrm{k}}}\mathcal{F}(\Psi,\eta)=2w_{\mathrm{k}}(H_{\mathrm{k}}(\rho_{\Psi,\eta})\Psi  - \mathcal{B}\Psi_{\mathrm{k}}\Sigma_{\mathrm{k}})F_{\eta_{\mathrm{k}}},
	\]
	we obtain
	\[
	M_{\Psi_{\mathrm{k}}}^{\eta_{\mathrm{k}}}(\nabla_{\Psi_{\mathrm{k}}}\mathcal{F}(\Psi,\eta))=M_{\Psi_{\mathrm{k}}}(H_{\mathrm{k}}(\rho_{\Psi,\eta})\Psi - \mathcal{B}\Psi_{\mathrm{k}}\Sigma_{\mathrm{k}}).
	\]
	Here $\Sigma_{\mathrm{k}}=\langle\Psi_{\mathrm{k}}^* H_{\mathrm{k}}(\rho_{\Psi,\eta})\Psi_{\mathrm{k}}\rangle$. Compared to $\nabla_{\Psi_{\mathrm{k}}}\mathcal{F}(\Psi,\eta)$, $M_{\Psi_{\mathrm{k}}}^{\eta_{\mathrm{k}}}(\nabla_{\Psi_{\mathrm{k}}}\mathcal{F}(\Psi,\eta))$ eliminates the occupation number $F_{\eta_{\mathrm{k}}}$ and $2w_{\mathrm{k}}$. We see that 
	$$(\nabla_{\Psi_{\mathrm{k}}}\mathcal{F} (\Psi,\eta))_i=2w_{\mathrm{k}}(H_{\mathrm{k}}(\rho_{\Psi,\eta})\psi_{\mathrm{k}i}-(\mathcal{B}\Psi_{\mathrm{k}}\Sigma_{\mathrm{k}})_i)(F_{\eta_{\mathrm{k}}})_{ii}$$
	is almost $0$ when the occupation number $(F_{\eta_{\mathrm{k}}})_{ii}$ is close to $0$. Consequently, the preconditioner $M_{\Psi_{\mathrm{k}}}^{\eta_{\mathrm{k}}}$ removes $F_{\eta_{\mathrm{k}}}$ in $\nabla_{\Psi_{\mathrm{k}}}\mathcal{F}(\Psi,\eta)$ to eliminate the impact of small occupation numbers on the convergence rate, which has been mentioned in  \cite{ismail-beigi2000new,marzari1997ensemble}.
	
	Applying $M_{\eta_{\mathrm{k}}}$ to $\nabla_{\eta_{\mathrm{k}}}\mathcal{F}(\Psi,\eta)$, we have
	\[
	M_{\eta_{\mathrm{k}}}(\nabla_{\eta_{\mathrm{k}}}\mathcal{F}(\Psi,\eta))=cI+\eta_{\mathrm{k}}-\Sigma_{\mathrm{k}},
	\]
	where $c$ is defined by \eqref{eq:c-value}. We note that $\kappa(\eta_{\mathrm{k}}-\Sigma_{\mathrm{k}})$ is the preconditioned gradient mentioned in \cite{freysoldt2009direct}, where $\kappa$ is some positive constant.
	
	We then introduce the conjugate gradient parameters. The typical choices of the conjugate gradient parameters include the Hestenes-Stiefel (HS) formula \cite{hestenes1952methods}, the Polak-Ribi\'ere-Polyak (PRP) formula \cite{polak1969note,polyak1969conjugate}, the Fletcher-Reeves (FR) formula \cite{fletcher1964function} and the Dai-Yuan (DY) formula \cite{dai1999nonlinear}. In our numerical experiments, we choose the DY formula, which is expressed as
	\[
	\beta^{(n)}=\frac{\operatorname{Re}\left(\inner{ M_{\Psi^{(n)}}^{\eta^{(n)}}(G_{\Psi}^{(n)}),G_{\Psi}^{(n)} }+\inner{M_{\eta^{(n)}}(G_{\eta}^{(n)}),G_{\eta}^{(n)}}\right)}{\operatorname{Re}\left(\inner{ D_\Psi^{(n-1)},G_{\Psi}^{(n)}-G_{\Psi}^{(n-1)} }+\inner{D_\eta^{(n-1)},G_{\eta}^{(n)}-G_{\eta}^{(n-1)}}\right)}
	\]
	for the PCG algorithm, where $\operatorname{Re}$ gives the real part, $G_{\Psi}^{(n)}=\nabla_{\Psi}\mathcal{F}(\Psi^{(n)},\eta^{(n)})$, $G_{\eta}^{(n)}=\nabla_{\eta}\mathcal{F}(\Psi^{(n)},\eta^{(n)})$. Hereafter, we shall sometimes use the notations $G_{\Psi}^{(n)}$ and $G_{\eta}^{(n)}$ to simplify some formulas.
	
	Now we turn to introduce the unitarity preserving strategy we use. Let $D_{\Psi_{\mathrm{k}}}\in\mathcal{T}_{\Psi_{\mathrm{k}}} \mathcal{M}_{\mathcal{B},\mathbb{C},\mathrm{k},N_G}^N$. We denote by
	$$\operatorname{ortho}(\Psi_{\mathrm{k}},D_{\Psi_{\mathrm{k}}},t_\Psi)$$
	one step from $\Psi_{\mathrm{k}}\in\mathcal{M}_{\mathcal{B},\mathbb{C},\mathrm{k},N_G}^N$ with the search direction $D_{\Psi_{\mathrm{k}}}$ and the step size $t_\Psi$ to the next point in $\mathcal{M}_{\mathcal{B},\mathbb{C},\mathrm{k},N_G}^N$.
	In our numerical experiments, we apply the QR strategy, which is defined by
	\begin{equation}\label{eq:qr-c}
		{\operatorname{ortho}}_{\textup{QR}}(\Psi_{\mathrm{k}}, D_{\Psi_{\mathrm{k}}},  t_\Psi)  =  (\Psi_{\mathrm{k}} +  t_\Psi D_{\Psi_{\mathrm{k}}})L^{-*},
	\end{equation}
	where $L$ is the lower triangular matrix such that
	\begin{equation*}
		LL^*=I_N + t_\Psi^2 \langle D_{\Psi_{\mathrm{k}}}^* \mathcal{B} D_{\Psi_{\mathrm{k}}}\rangle.
	\end{equation*}
	We refer \cite{dai2017conjugate} for some other unitarity preserving strategies such as the PD strategy.
	
	We assume $\operatorname{ortho}(\Psi_{\mathrm{k}},D_{\Psi_{\mathrm{k}}},t_\Psi)$ satisfies the following assumption, which is needed in our analysis and valid for both QR and PD strategy (see, e.g., \cite{dai2017conjugate}).
	\begin{assumption}\label{assump:ortho-prop}
		There exist constants $C_1,C_2>0$ such that
		\begin{gather*}
			\|\operatorname{ortho}(\Phi,D_\Phi,t)-\Phi\|\le C_1 t\|D_\Phi\|,\quad\forall t\ge0,\\
			\left\|\frac{\partial}{\partial t}\operatorname{ortho}(\Phi,D_\Phi,t)-D_\Phi\right\|\le C_2 t\|D_\Phi\|^2,\quad\forall t\ge0
		\end{gather*}
		for any $\Phi\in\mathcal{M}_{\mathcal{B}}^N$ and $D_\Phi\in\mathcal{T}_\Phi\mathcal{M}_{\mathcal{B}}^N$.
	\end{assumption}
	
	We now propose our preconditioned conjugate gradient method as Algorithm \ref{algo:pcg}.
	\begin{algorithm}[!htbp]
		\caption{PCG method}
		\label{algo:pcg}
		\begin{algorithmic}[1]
			\STATE Given $\alpha\in[0,1)$, $\nu\in(0,1/2]$, $t_\Psi^{\text{min}},t_\eta^{\text{min}},E_{\text{cut}}>0$, and choose the initial data $\Psi^{(0)}_{\mathrm{k}}\in \mathcal{M}_{\mathcal{B},\mathbb{C},\mathrm{k},N_G}^N$ and  $\eta_{\mathrm{k}}^{(0)} = \operatorname{Diag}(\epsilon_{\mathrm{k}1}^{(0)},\ldots,\epsilon_{\mathrm{k}N}^{(0)})$ for any $\mathrm{k}\in\mathcal{K}$. Let $D_\Psi^{(-1)}=(D_{\Psi_{\mathrm{k}}}^{(-1)})_{\mathrm{k}\in\mathcal{K}}= 0$, $D_\eta^{(-1)}=(D_{\eta_{\mathrm{k}}}^{(-1)})_{\mathrm{k}\in\mathcal{K}}=0$, $n = 0$;
			\STATE Calculate the gradient $G_\Psi^{(n)}=(G_{\Psi_{\mathrm{k}}}^{(n)})_{\mathrm{k}\in\mathcal{K}}$, $G_\eta^{(n)}=(G_{\eta_{\mathrm{k}}}^{(n)})_{\mathrm{k}\in\mathcal{K}}$ and the preconditioned gradient $\widetilde{G}_{\Psi}^{(n)}=M_{\Psi^{(n)}}^{\eta^{(n)}}(G_{\Psi}^{(n)}),~\widetilde{G}_{\eta}^{(n)}=M_{\eta^{(n)}}(G_{\eta}^{(n)})$, where $G_{\Psi_{\mathrm{k}}}^{(n)}=\nabla_{\Psi_{\mathrm{k}}}\mathcal{F}(\Psi^{(n)},\eta^{(n)})$, $G_{\eta_{\mathrm{k}}}^{(n)}=\nabla_{\eta_{\mathrm{k}}}\mathcal{F}(\Psi^{(n)},\eta^{(n)})$
			\STATE Calculate the conjugate gradient parameter $\beta^{(n)}$;
			\STATE\label{algo-state:pcg-cg} Calculate the search direction
			\begin{equation*}
				D_{\Psi}^{(n)} = -\widetilde{G}_{\Psi}^{(n)}+ \beta^{(n)}D_{\Psi}^{(n-1)},~D_{\eta}^{(n)} = -\widetilde{G}_{\eta}^{(n)}+ \beta^{(n)}D_{\eta}^{(n-1)};
			\end{equation*}
			\STATE\label{algo-state:pcg-proj} Project the search direction $D_{\Psi_{\mathrm{k}}}^{(n)}$ to the tangent space $\mathcal{T}_{\Psi_{\mathrm{k}}} \mathcal{M}_{\mathcal{B},\mathbb{C},\mathrm{k},N_{\mathrm{G}}}^{N}$
			$$D_{\Psi_{\mathrm{k}}}^{(n)} = P_{0,\Psi^{(n)}_{\mathrm{k}}}^* (D_{\Psi_{\mathrm{k}}}^{(n)}),~\forall\mathrm{k}\in\mathcal{K};$$
			
			\STATE\label{algo-state:pcg-sign} Set $D_{\Psi_{\mathrm{k}}}^{(n)} = -D_{\Psi_{\mathrm{k}}}^{(n)} \operatorname{sign}\operatorname{Re}\inner{G_{\Psi}^{(n)}, D_{\Psi}^{(n)}}$, $D_{\eta_{\mathrm{k}}}^{(n)} = -D_{\eta_{\mathrm{k}}}^{(n)}\operatorname{sign}\operatorname{Re}\inner{G_{\eta}^{(n)}, D_{\eta}^{(n)}}$ for any $\mathrm{k}\in\mathcal{K}$;
			
			\STATE Choose the appropriate parameters $(\theta_\Psi^{(n)},\theta_\eta^{(n)})$;
			
			\STATE Calculate $\mathcal{C}_n$ by \eqref{eq:Armijo-C};
			
			\STATE Given the initial guess of the step sizes $(t_\Psi^{n,\text{initial}},t_\eta^{n,\text{initial}})$;
			
			\STATE Give $c_{n,1}$ and $c_{n,2}$ and calculate $t_\Psi^{(n)}$ and $t_\eta^{(n)}$ by
			\begin{align*}
				&\mathrel{\phantom{=}}(t_\Psi^{(n)},t_\eta^{(n)})\\
				&=\text{Adaptve double step size strategy}(\Psi^{(n)},\eta^{(n)},D_\Psi^{n},D_\eta^{(n)},t_\Psi^{n,\text{initial}},t_\eta^{n,\text{initial}},\\
				&~~~~~~~~~~~~~~~~~~~~~~~~~~~~~~~~~~~~~~~~~~~~~~~~~~~~t_\Psi^{\text{min}},t_\eta^{\text{min}},\nu,c_{n,1},c_{n,2},\theta_\Psi^{(n)},\theta_\eta^{(n)},\mathcal{C}_n);
			\end{align*}
			\STATE Set $\Psi^{(n+1)}_{\mathrm{k}} = {\operatorname{ortho}}(\Psi^{(n)}_{\mathrm{k}}, D_{\Psi_{\mathrm{k}}}^{(n)}, t_{\Psi}^{(n)})$, $\eta^{(n+1)}_{\mathrm{k}} = \eta^{(n)}_{\mathrm{k}}+t_\eta^{(n)} D_{\eta_{\mathrm{k}}}^{(n)}$ for any $\mathrm{k}\in\mathcal{K}$;
			\STATE Pick up $P^{(n+1)}=(P_{\mathrm{k}}^{(n+1)})_{\mathrm{k}\in\mathcal{K}}\in(\mathcal{O}_{\mathbb{C}}^{N\times N})^{|\mathcal{K}|}$ such that $(P^{(n+1)}_{\mathrm{k}})^*\eta_{\mathrm{k}}^{(n+1)}P_{\mathrm{k}}^{(n+1)}$ is diagonal for any $\mathrm{k}\in\mathcal{K}$ and then update
			\begin{gather*}
				\Psi^{(n+1)}=\Psi^{(n+1)} P^{(n+1)},\quad
				\eta^{(n+1)}=(P^{(n+1)})^*\eta^{(n+1)}P^{(n+1)},\\
				D_{\Psi}^{(n)}=D_{\Psi}^{(n)}P^{(n+1)},\quad
				D_{\eta}^{(n)}=(P^{(n+1)})^* D_{\eta}^{(n)}P^{(n+1)};
			\end{gather*}
			\STATE Let $n = n+1$. Convergence check: if not converged, go to step 2; else, stop.
		\end{algorithmic}
	\end{algorithm}
	
	We see that $D_{\Psi}^{(n)}$ in the \ref{algo-state:pcg-cg}-th step of Algorithm \ref{algo:pcg} is not in the tangent space $\prod_{\mathrm{k}\in\mathcal{K}}\mathcal{T}_{\Psi_{\mathrm{k}}^{(n)}}\mathcal{M}_{\mathcal{B},\mathbb{C},\mathrm{k},N_G}^N$. Thus we project $D_{\Psi_\mathrm{k}}^{(n)}$ to $\mathcal{T}_{\Psi_\mathrm{k}^{(n)}}\mathcal{M}_{\mathcal{B},\mathbb{C},\mathrm{k},N_G}^N$ in the \ref{algo-state:pcg-proj}-th step. In order to ensure
	\[
	\frac{\partial\bar{\mathcal{F}}_n}{\partial t_\Psi}(0,0)=\operatorname{Re}\langle\nabla_\Psi\mathcal{F}(\Psi^{(n)},\eta^{(n)}),D_\Psi^{(n)}\rangle,
	\]
	we apply the projection $P_{0,\Psi^{(n)}_{\mathrm{k}}}^*$ for each $\mathrm{k}\in\mathcal{K}$.
	
	\subsubsection{The restarted preconditioned conjugate gradient method}
	To get better approximations, we turn to consider the restarted preconditioned conjugate gradient method.
	
	In practice, we expect that there exists a positive constant $a$ such that
	\begin{equation}\label{eq:limsup-ge0}
		\varlimsup_{n\to\infty}\frac{-\operatorname{Re}\left(\inner{G_{\Psi}^{(n)},D_\Psi^{(n)}}+\inner{G_{\eta}^{(n)},D_\eta^{(n)}}\right)}{\left|\inner{G_{\Psi}^{(n)},M_{\Psi^{(n)}}^{\eta^{(n)}}(G_{\Psi}^{(n)})}\right|^a+\left|\inner{G_{\eta}^{(n)},M_{\eta^{(n)}}(G_{\eta}^{(n)})}\right|^a}> 0.
	\end{equation}
	Here $G_\Psi^{(n)}=\nabla_\Psi\mathcal{F}(\Psi^{(n)},\eta^{(n)})$ and $G_\eta^{(n)}=\nabla_\eta\mathcal{F}(\Psi^{(n)},\eta^{(n)})$. Thus we restart the PCG method when
	\begin{equation}\label{eq:restart-cond}
		\frac{-\operatorname{Re}\left(\inner{G_{\Psi}^{(n)},D_\Psi^{(n)}}+\inner{G_{\eta}^{(n)},D_\eta^{(n)}}\right)}{\left|\inner{G_{\Psi}^{(n)},M_{\Psi^{(n)}}^{\eta^{(n)}}(G_{\Psi}^{(n)})}\right|^a+\left|\inner{G_{\eta}^{(n)},M_{\eta^{(n)}}(G_{\eta}^{(n)})}\right|^a}<\gamma,
	\end{equation}
	for some given parameter $\gamma\in(0,1)$. Applying this strategy, we propose a restarted preconditioned conjugate gradient method shown as Algorithm \ref{algo:rpcg1}.

	\begin{algorithm}[!htbp]
		\caption{Restarted PCG method I}
		\label{algo:rpcg1}
		\begin{algorithmic}[1]
			\STATE Given $\alpha\in[0,1)$, $\nu\in(0,1/2]$, $a,t_\Psi^{\text{min}},t_\eta^{\text{min}},E_{\text{cut}}>0$, and choose the initial data $\Psi^{(0)}_{\mathrm{k}}\in \mathcal{M}_{\mathcal{B},\mathbb{C},\mathrm{k},N_G}^N$ and  $\eta_{\mathrm{k}}^{(0)} = \operatorname{Diag}(\epsilon_{\mathrm{k}1}^{(0)},\ldots,\epsilon_{\mathrm{k}N}^{(0)})$ for any $\mathrm{k}\in\mathcal{K}$. Let $D_\Psi^{(-1)}=(D_{\Psi_{\mathrm{k}}}^{(-1)})_{\mathrm{k}\in\mathcal{K}}= 0$, $D_\eta^{(-1)}=(D_{\eta_{\mathrm{k}}}^{(-1)})_{\mathrm{k}\in\mathcal{K}}=0$, $n = 0$;
			\STATE Calculate the gradient $G_\Psi^{(n)}=(G_{\Psi_{\mathrm{k}}}^{(n)})_{\mathrm{k}\in\mathcal{K}}$, $G_\eta^{(n)}=(G_{\eta_{\mathrm{k}}}^{(n)})_{\mathrm{k}\in\mathcal{K}}$ and the preconditioned gradient $\widetilde{G}_{\Psi}^{(n)}=M_{\Psi^{(n)}}^{\eta^{(n)}}(G_{\Psi}^{(n)}),~\widetilde{G}_{\eta}^{(n)}=M_{\eta^{(n)}}(G_{\eta}^{(n)})$, where $G_{\Psi_{\mathrm{k}}}^{(n)}=\nabla_{\Psi_{\mathrm{k}}}\mathcal{F}(\Psi^{(n)},\eta^{(n)})$, $G_{\eta_{\mathrm{k}}}^{(n)}=\nabla_{\eta_{\mathrm{k}}}\mathcal{F}(\Psi^{(n)},\eta^{(n)})$
			\STATE Calculate the conjugate gradient parameter $\beta^{(n)}$;
			\STATE Calculate the search direction
			\begin{equation*}
				D_{\Psi}^{(n)} = -\widetilde{G}_{\Psi}^{(n)}+ \beta^{(n)}D_{\Psi}^{(n-1)},~D_{\eta}^{(n)} = -\widetilde{G}_{\eta}^{(n)}+ \beta^{(n)}D_{\eta}^{(n-1)};
			\end{equation*}
			\STATE Project the search direction $D_{\Psi_{\mathrm{k}}}^{(n)}$ to the tangent space $\mathcal{T}_{\Psi_{\mathrm{k}}} \mathcal{M}_{\mathcal{B},\mathbb{C},\mathrm{k},N_{\mathrm{G}}}^{N}$
			$$D_{\Psi_{\mathrm{k}}}^{(n)} = P_{0,\Psi^{(n)}_{\mathrm{k}}}^* (D_{\Psi_{\mathrm{k}}}^{(n)}),~\forall\mathrm{k}\in\mathcal{K};$$
			
			\STATE\label{algo-state:rpcg1-sign} Set $D_{\Psi_{\mathrm{k}}}^{(n)} = -D_{\Psi_{\mathrm{k}}}^{(n)} \operatorname{sign}\operatorname{Re}\inner{G_{\Psi}^{(n)}, D_{\Psi}^{(n)}}$, $D_{\eta_{\mathrm{k}}}^{(n)} = -D_{\eta_{\mathrm{k}}}^{(n)}\operatorname{sign}\operatorname{Re}\inner{G_{\eta}^{(n)}, D_{\eta}^{(n)}}$ for any $\mathrm{k}\in\mathcal{K}$;
			
			\IF{\eqref{eq:restart-cond} holds}
			\STATE $D_{\Psi_{\mathrm{k}}}^{(n)} = -P_{0,\Psi^{(n)}_{\mathrm{k}}}^* (\widetilde{G}_{\Psi_{\mathrm{k}}}^{(n)}),~D_{\eta_{\mathrm{k}}}^{(n)}=-\widetilde{G}_{\eta_{\mathrm{k}}}^{(n)},~\forall \mathrm{k}\in\mathcal{K}$;
			\ENDIF
			
			\STATE Choose the appropriate parameters $(\theta_\Psi^{(n)},\theta_\eta^{(n)})$;
			
			\STATE Calculate $\mathcal{C}_n$ by \eqref{eq:Armijo-C};
			
			\STATE Given the initial guess of the step sizes $(t_\Psi^{n,\text{initial}},t_\eta^{n,\text{initial}})$;
			
			\STATE Give $c_{n,1}$ and $c_{n,2}$ and calculate $t_\Psi^{(n)}$ and $t_\eta^{(n)}$ by
			\begin{align*}
				&\mathrel{\phantom{=}}(t_\Psi^{(n)},t_\eta^{(n)})\\
				&=\text{Adaptve double step size strategy}(\Psi^{(n)},\eta^{(n)},D_\Psi^{n},D_\eta^{(n)},t_\Psi^{n,\text{initial}},t_\eta^{n,\text{initial}},\\
				&~~~~~~~~~~~~~~~~~~~~~~~~~~~~~~~~~~~~~~~~~~~~~~~~~~~~t_\Psi^{\text{min}},t_\eta^{\text{min}},\nu,c_{n,1},c_{n,2},\theta_\Psi^{(n)},\theta_\eta^{(n)},\mathcal{C}_n);
			\end{align*}
			\STATE Set $\Psi^{(n+1)}_{\mathrm{k}} = {\operatorname{ortho}}(\Psi^{(n)}_{\mathrm{k}}, D_{\Psi_{\mathrm{k}}}^{(n)}, t_{\Psi}^{(n)})$, $\eta^{(n+1)}_{\mathrm{k}} = \eta^{(n)}_{\mathrm{k}}+t_\eta^{(n)} D_{\eta_{\mathrm{k}}}^{(n)}$ for any $\mathrm{k}\in\mathcal{K}$;
			\STATE Pick up $P^{(n+1)}=(P_{\mathrm{k}}^{(n+1)})_{\mathrm{k}\in\mathcal{K}}\in(\mathcal{O}_{\mathbb{C}}^{N\times N})^{|\mathcal{K}|}$ such that $(P^{(n+1)}_{\mathrm{k}})^*\eta_{\mathrm{k}}^{(n+1)}P_{\mathrm{k}}^{(n+1)}$ is diagonal for any $\mathrm{k}\in\mathcal{K}$ and then update
			\begin{gather*}
				\Psi^{(n+1)}=\Psi^{(n+1)} P^{(n+1)},\quad
				\eta^{(n+1)}=(P^{(n+1)})^*\eta^{(n+1)}P^{(n+1)},\\
				D_{\Psi}^{(n)}=D_{\Psi}^{(n)}P^{(n+1)},\quad
				D_{\eta}^{(n)}=(P^{(n+1)})^* D_{\eta}^{(n)}P^{(n+1)};
			\end{gather*}
			\STATE Let $n = n+1$. Convergence check: if not converged, go to step 2; else, stop.
		\end{algorithmic}
	\end{algorithm}

	In the numerical experiments, we observe that retarting directly is sometimes better than changing the sign of the search direction when the preconditioned conjugate gradient direction is not a descent direction. Thus we propose a new restarted preconditioned conjugate gradient method shown as Algorithm \ref{algo:rpcg2}.
	
	\begin{algorithm}[!htbp]
		\caption{Restarted PCG method II}
		\label{algo:rpcg2}
		\begin{algorithmic}[1]
			\STATE Given $\alpha\in[0,1)$, $\nu\in(0,1/2]$, $a,t_\Psi^{\text{min}},t_\eta^{\text{min}},E_{\text{cut}}>0$, and choose the initial data $\Psi^{(0)}_{\mathrm{k}}\in \mathcal{M}_{\mathcal{B},\mathbb{C},\mathrm{k},N_G}^N$ and  $\eta_{\mathrm{k}}^{(0)} = \operatorname{Diag}(\epsilon_{\mathrm{k}1}^{(0)},\ldots,\epsilon_{\mathrm{k}N}^{(0)})$ for any $\mathrm{k}\in\mathcal{K}$. Let $D_\Psi^{(-1)}=(D_{\Psi_{\mathrm{k}}}^{(-1)})_{\mathrm{k}\in\mathcal{K}}= 0$, $D_\eta^{(-1)}=(D_{\eta_{\mathrm{k}}}^{(-1)})_{\mathrm{k}\in\mathcal{K}}=0$, $n = 0$;
			\STATE Calculate the gradient $G_\Psi^{(n)}=(G_{\Psi_{\mathrm{k}}}^{(n)})_{\mathrm{k}\in\mathcal{K}}$, $G_\eta^{(n)}=(G_{\eta_{\mathrm{k}}}^{(n)})_{\mathrm{k}\in\mathcal{K}}$ and the preconditioned gradient $\widetilde{G}_{\Psi}^{(n)}=M_{\Psi^{(n)}}^{\eta^{(n)}}(G_{\Psi}^{(n)}),~\widetilde{G}_{\eta}^{(n)}=M_{\eta^{(n)}}(G_{\eta}^{(n)})$, where $G_{\Psi_{\mathrm{k}}}^{(n)}=\nabla_{\Psi_{\mathrm{k}}}\mathcal{F}(\Psi^{(n)},\eta^{(n)})$, $G_{\eta_{\mathrm{k}}}^{(n)}=\nabla_{\eta_{\mathrm{k}}}\mathcal{F}(\Psi^{(n)},\eta^{(n)})$
			\STATE Calculate the conjugate gradient parameter $\beta^{(n)}$;
			\STATE Calculate the search direction
			\begin{equation*}
				D_{\Psi}^{(n)} = -\widetilde{G}_{\Psi}^{(n)}+ \beta^{(n)}D_{\Psi}^{(n-1)},~D_{\eta}^{(n)} = -\widetilde{G}_{\eta}^{(n)}+ \beta^{(n)}D_{\eta}^{(n-1)};
			\end{equation*}
			\STATE Project the search direction $D_{\Psi_{\mathrm{k}}}^{(n)}$ to the tangent space $\mathcal{T}_{\Psi_{\mathrm{k}}} \mathcal{M}_{\mathcal{B},\mathbb{C},\mathrm{k},N_{\mathrm{G}}}^{N}$
			$$D_{\Psi_{\mathrm{k}}}^{(n)} = P_{0,\Psi^{(n)}_{\mathrm{k}}}^* (D_{\Psi_{\mathrm{k}}}^{(n)}),~\forall\mathrm{k}\in\mathcal{K};$$
			
			\IF{$\operatorname{sign}\operatorname{Re}\inner{G_{\Psi}^{(n)},D_{\Psi}^{(n)}}\ge 0$ or $\operatorname{sign}\operatorname{Re}\inner{G_{\eta}^{(n)}, D_{\eta}^{(n)}}\ge 0$ or \eqref{eq:restart-cond} holds}
			\STATE $D_{\Psi_{\mathrm{k}}}^{(n)} = -P_{0,\Psi^{(n)}_{\mathrm{k}}}^* (\widetilde{G}_{\Psi_{\mathrm{k}}}^{(n)}),~D_{\eta_{\mathrm{k}}}^{(n)}=-\widetilde{G}_{\eta_{\mathrm{k}}}^{(n)},~\forall\mathrm{k}\in\mathcal{K}$;
			\ENDIF
			
			\STATE Choose the appropriate parameters $(\theta_\Psi^{(n)},\theta_\eta^{(n)})$;
			
			\STATE Calculate $\mathcal{C}_n$ by \eqref{eq:Armijo-C};
			
			\STATE Given the initial guess of the step sizes $(t_\Psi^{n,\text{initial}},t_\eta^{n,\text{initial}})$;
			
			\STATE Give $c_{n,1}$ and $c_{n,2}$ and calculate $t_\Psi^{(n)}$ and $t_\eta^{(n)}$ by
			\begin{align*}
				&\mathrel{\phantom{=}}(t_\Psi^{(n)},t_\eta^{(n)})\\
				&=\text{Adaptve double step size strategy}(\Psi^{(n)},\eta^{(n)},D_\Psi^{n},D_\eta^{(n)},t_\Psi^{n,\text{initial}},t_\eta^{n,\text{initial}},\\
				&~~~~~~~~~~~~~~~~~~~~~~~~~~~~~~~~~~~~~~~~~~~~~~~~~~~~t_\Psi^{\text{min}},t_\eta^{\text{min}},\nu,c_{n,1},c_{n,2},\theta_\Psi^{(n)},\theta_\eta^{(n)},\mathcal{C}_n);
			\end{align*}
			\STATE Set $\Psi^{(n+1)}_{\mathrm{k}} = {\operatorname{ortho}}(\Psi^{(n)}_{\mathrm{k}}, D_{\Psi_{\mathrm{k}}}^{(n)}, t_{\Psi}^{(n)})$, $\eta^{(n+1)}_{\mathrm{k}} = \eta^{(n)}_{\mathrm{k}}+t_\eta^{(n)} D_{\eta_{\mathrm{k}}}^{(n)}$ for any $\mathrm{k}\in\mathcal{K}$;
			\STATE Pick up $P^{(n+1)}=(P_{\mathrm{k}}^{(n+1)})_{\mathrm{k}\in\mathcal{K}}\in(\mathcal{O}_{\mathbb{C}}^{N\times N})^{|\mathcal{K}|}$ such that $(P^{(n+1)}_{\mathrm{k}})^*\eta_{\mathrm{k}}^{(n+1)}P_{\mathrm{k}}^{(n+1)}$ is diagonal for any $\mathrm{k}\in\mathcal{K}$ and then update
			\begin{gather*}
				\Psi^{(n+1)}=\Psi^{(n+1)} P^{(n+1)},\quad
				\eta^{(n+1)}=(P^{(n+1)})^*\eta^{(n+1)}P^{(n+1)},\\
				D_{\Psi}^{(n)}=D_{\Psi}^{(n)}P^{(n+1)},\quad
				D_{\eta}^{(n)}=(P^{(n+1)})^* D_{\eta}^{(n)}P^{(n+1)};
			\end{gather*}
			\STATE Let $n = n+1$. Convergence check: if not converged, go to step 2; else, stop.
		\end{algorithmic}
	\end{algorithm}

	\subsection{Convergence analysis}
	In this subsection, we analyze the convergence of the restarted PCG methods (Algorithms \ref{algo:rpcg1} and \ref{algo:rpcg2}). For convenience, we show the detailed proofs for the case that the sampling of k-points is at $\Gamma$ point only. For the general sampling $\mathcal{K}$, the convergence of the restarted PCG method can be obtained by the similar arguments.  We shall sometimes use the notations $G_{\Psi}^{(n)}=\nabla_{\Psi}\mathcal{F}(\Psi^{(n)},\eta^{(n)})$ and $G_{\eta}^{(n)}=\nabla_{\eta}\mathcal{F}(\Psi^{(n)},\eta^{(n)})$ to simplify some formulas.
	
	We first give some assumptions which is needed in our analysis.
	\begin{assumption}\label{assump:M-positive}
		There exist $\alpha_\Psi,\alpha_\eta>0$ such that 
		\begin{equation}\label{ineq:M-positive}
			\begin{aligned}
				\inner{\nabla_\Psi\mathcal{F}(\Psi,\eta),M_{\Psi}^{\eta}(\nabla_\Psi\mathcal{F}(\Psi,\eta))}&\ge\alpha_\Psi\|\nabla_\Psi\mathcal{F}(\Psi,\eta)\|^2,\\
				\inner{\nabla_\eta\mathcal{F}(\Psi,\eta),M_{\eta}(\nabla_\eta\mathcal{F}(\Psi,\eta))}&\ge\alpha_\eta\|\nabla_\eta\mathcal{F}(\Psi,\eta)\|_{sF}^2
			\end{aligned}
		\end{equation}
		for $(\Psi,\eta)\in\mathcal{M}_{\mathcal{B},N_G}^N\times\mathcal{S}^{N\times N}$.
	\end{assumption}

	We obtain from the assumption above that the preconditioner is bounded from below uniformly. We see that $M_{\Psi}^{\eta}$ we applied always satisfies \eqref{ineq:M-positive} and $M_{\eta}$ we applied satisfies \eqref{ineq:M-positive} when $f$ is strictly monotonically decreasing.
	
	\begin{assumption}\label{assump:lipschitzF}
		The gradient of $\mathcal{F}$ is Lipschitz continuous. That is, there exists $L_0>0$ such that
		\begin{align*}
			&\mathrel{\phantom{=}}\|\mathcal{F}_\Psi(\Psi_1,\eta_1)-\mathcal{F}_\Psi(\Psi_2,\eta_2)\|+\|\mathcal{F}_\eta(\Psi_1,\eta_1)-\mathcal{F}_\eta(\Psi_2,\eta_2)\|_{sF}\\
			&\le L_0(\|\Psi_1-\Psi_2\|+\|\eta_1-\eta_2\|_{sF})
		\end{align*}
		for any $(\Psi_1,\eta_1),(\Psi_2,\eta_2)\in\mathcal{M}_{\mathcal{B},N_G}^N\times \mathcal{S}^{N\times N}$.
	\end{assumption}
	
	\begin{assumption}\label{assump:cn-bounded}
		There exists a constant $\bar{C}>0$ such that
		\begin{equation}\label{eq:cn-bounded}
			c_{n,1}+c_{n,2}\le\bar{C}(\|D_\Psi^{(n)}\|^2+\|D_\eta^{(n)}\|_{sF}^2),\quad n=0,1,2,\ldots
		\end{equation}
	\end{assumption}
	\begin{assumption}\label{assump:cn-lowerupper}
		There holds
		\begin{equation}\label{eq:cn-lowerupper}
			\underline{c}\le\frac{-\frac{1}{c_{n,2}}\frac{\partial\bar{\mathcal{F}}_n}{\partial t_\eta}(0,0)}{-\frac{1}{c_{n,1}}\frac{\partial\bar{\mathcal{F}}_n}{\partial t_\Psi}(0,0)}\le\bar{c}, \quad n=0,1,2,\ldots
		\end{equation}
	\end{assumption}
	
	We observe that the assumption \ref{assump:cn-bounded} is similar to that the Hessian of $\mathcal{F}$ is bounded. If the same step sizes for $\Psi$ and $\eta$ are applied, then we see from Remark \ref{rmk:same-step} that Assumption \ref{assump:cn-lowerupper} is satisfied. And we can always choose some $c_{n,1}$ and $c_{n,2}$ such that Assumptions \ref{assump:cn-bounded} and \ref{assump:cn-lowerupper} hold.
	
	\begin{assumption}\label{assump:limsup-Dbounded}
		For the subsequence $\{n_j\}_{j\in\mathbb{N}}$ satisfying
		\begin{equation*}
			\lim_{j\to\infty}\frac{-\left(\inner{G_{\Psi}^{(n_j)},D_\Psi^{(n_j)}}+\inner{G_{\eta}^{(n_j)},D_\eta^{(n_j)}}\right)}{\left|\inner{G_{\Psi}^{(n_j)},M_{\Psi^{(n_j)}}^{\eta^{(n_j)}}(G_{\Psi}^{(n_j)})}\right|^a+\left|\inner{G_{\eta}^{(n_j)},M_{\eta^{(n_j)}}(G_{\eta}^{(n_j)})}\right|^a}\ne 0,
		\end{equation*}
		there exists a constant $C>0$ such that
		\begin{equation}\label{ineq:Dbounded}
			\|D_\Psi^{(n_j)}\|+\|D_\eta^{(n_j)}\|\le C,~\forall j\in\mathbb{N}.
		\end{equation}
	\end{assumption}
	
	We see that the above assumption can be satisfied by many strategies in practice. For example, if the preconditioned gradients in the iterations are bounded uniformly, we can restart the algorithm when the conjugate gradient parameter is very large. Then we obtain uniformly bounded search directions.

	In the following lemma, we need the following assumption for the step sizes.
	\begin{equation}\label{eq:liminf-step-ge0}
		\varliminf_{n\to\infty} t_\Psi^{(n)}>0,\quad\varliminf_{n\to\infty} t_\eta^{(n)}>0.
	\end{equation}

	\begin{lemma}\label{lem:converge}
		Suppose Assumption \ref{assump:M-positive} holds and the sequence $\{(\Psi^{(n)},\eta^{(n)})\}_{n\in\mathbb{N}}$ is generated by Algorithm \ref{algo:pcg}. If $D_\Psi^{(n)}$ and $D_\eta^{(n)}$ satisfy \eqref{eq:descent} and \eqref{eq:limsup-ge0}, $t_\Psi^{(n)}$ and $t_\eta^{(n)}$ satisfy \eqref{eq:armijo} and \eqref{eq:liminf-step-ge0}, then either 
		\[
		\|\nabla_\Psi\mathcal{F}(\Psi^{n},\eta^{(n)})\|=0,\,\|\nabla_\eta\mathcal{F}(\Psi^{n},\eta^{(n)})\|_{sF}=0
		\]
		for some positive $n$ or
		\[
		\varliminf_{n\to\infty}(\|\nabla_\Psi\mathcal{F}(\Psi^{n},\eta^{(n)})\|+\|\nabla_\eta\mathcal{F}(\Psi^{n},\eta^{(n)})\|_{sF})=0.
		\]
	\end{lemma}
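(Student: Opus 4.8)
The plan is to transfer the standard convergence analysis of nonmonotone line-search methods (of Zhang--Hager type \cite{zhang2004nonmonotone}) to the present setting with two search directions and two step sizes. If $\|\nabla_\Psi\mathcal{F}(\Psi^{(n)},\eta^{(n)})\|=\|\nabla_\eta\mathcal{F}(\Psi^{(n)},\eta^{(n)})\|_{sF}=0$ for some $n$, the first alternative holds; assume instead it fails, so the sequence is infinite, and we must show $\varliminf_{n\to\infty}(\|\nabla_\Psi\mathcal{F}(\Psi^{(n)},\eta^{(n)})\|+\|\nabla_\eta\mathcal{F}(\Psi^{(n)},\eta^{(n)})\|_{sF})=0$. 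Throughout I would use two facts from the preliminaries: $\frac{\partial\bar{\mathcal{F}}_n}{\partial t_\Psi}(0,0)=\operatorname{Re}\inner{G_\Psi^{(n)},D_\Psi^{(n)}}$ (which relies on the tangency $\inner{(D_{\Psi_{\mathrm{k}}}^{(n)})^*\mathcal{B}\Psi_{\mathrm{k}}^{(n)}}=0$ enforced by the projection onto the tangent space in Algorithm~\ref{algo:pcg}) and $\frac{\partial\bar{\mathcal{F}}_n}{\partial t_\eta}(0,0)=\operatorname{Re}\inner{G_\eta^{(n)},D_\eta^{(n)}}$; and the fact, from Theorem~\ref{prop:grad} together with the unitary invariance of the Frobenius and $\|\cdot\|_{sF}$ norms, that the closing rotation by $P^{(n+1)}$ in Algorithm~\ref{algo:pcg} leaves $\mathcal{F}(\Psi^{(n)},\eta^{(n)})$, $\|\nabla_\Psi\mathcal{F}\|$ and $\|\nabla_\eta\mathcal{F}\|_{sF}$ untouched, so it plays no role in the estimates below.

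The core of the argument concerns the reference value $\mathcal{C}_n$ of \eqref{eq:Armijo-C}. From \eqref{eq:armijo} and the descent property \eqref{eq:descent} one has $\mathcal{F}(\Psi^{(n+1)},\eta^{(n+1)})=\bar{\mathcal{F}}_n(t_\Psi^{(n)},t_\eta^{(n)})\le\mathcal{C}_n$; substituting into $\mathcal{C}_{n+1}=(\alpha Q_n\mathcal{C}_n+\mathcal{F}(\Psi^{(n+1)},\eta^{(n+1)}))/Q_{n+1}$ and using $Q_{n+1}=\alpha Q_n+1$ gives
\[
\mathcal{C}_{n+1}-\mathcal{C}_n=\frac{\mathcal{F}(\Psi^{(n+1)},\eta^{(n+1)})-\mathcal{C}_n}{Q_{n+1}}\le\frac{\nu}{Q_{n+1}}\Big(t_\Psi^{(n)}\frac{\partial\bar{\mathcal{F}}_n}{\partial t_\Psi}(0,0)+t_\eta^{(n)}\frac{\partial\bar{\mathcal{F}}_n}{\partial t_\eta}(0,0)\Big)\le 0,
\]
so $\{\mathcal{C}_n\}$ is non-increasing. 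Since $\mathcal{C}_n$ is a convex combination of $\mathcal{F}(\Psi^{(0)},\eta^{(0)}),\dots,\mathcal{F}(\Psi^{(n)},\eta^{(n)})$ and $\mathcal{F}$ is bounded below on the discrete feasible set (the planewave Stiefel manifold is compact, $f$ and $S$ are bounded, and $\mathcal{E}_{\textup{xc}}$ is controlled by \eqref{eq:N-prop}), $\{\mathcal{C}_n\}$ converges to a finite $\mathcal{C}_*$. Using $Q_{n+1}=1+\alpha+\cdots+\alpha^{n+1}\le 1/(1-\alpha)$, I would telescope the displayed inequality to obtain $\sum_{n\ge 0}\big(-t_\Psi^{(n)}\frac{\partial\bar{\mathcal{F}}_n}{\partial t_\Psi}(0,0)-t_\eta^{(n)}\frac{\partial\bar{\mathcal{F}}_n}{\partial t_\eta}(0,0)\big)\le(\mathcal{C}_0-\mathcal{C}_*)/(\nu(1-\alpha))<\infty$, a series of nonnegative terms by \eqref{eq:descent}; hence $-t_\Psi^{(n)}\frac{\partial\bar{\mathcal{F}}_n}{\partial t_\Psi}(0,0)\to 0$ and $-t_\eta^{(n)}\frac{\partial\bar{\mathcal{F}}_n}{\partial t_\eta}(0,0)\to 0$.

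Next I would bring in \eqref{eq:liminf-step-ge0}: fix $\delta>0$ and $N_0$ with $t_\Psi^{(n)},t_\eta^{(n)}\ge\delta$ for $n\ge N_0$; then $0\le-\delta\,\frac{\partial\bar{\mathcal{F}}_n}{\partial t_\Psi}(0,0)\le-t_\Psi^{(n)}\frac{\partial\bar{\mathcal{F}}_n}{\partial t_\Psi}(0,0)$ (and similarly for $\eta$) forces $-\operatorname{Re}\big(\inner{G_\Psi^{(n)},D_\Psi^{(n)}}+\inner{G_\eta^{(n)},D_\eta^{(n)}}\big)\to 0$. Now \eqref{eq:limsup-ge0} supplies a subsequence $\{n_j\}$ along which the quotient in \eqref{eq:limsup-ge0} stays $\ge c_0$ for some $c_0>0$; since its numerator tends to $0$ along $\{n_j\}$, its denominator $|\inner{G_\Psi^{(n_j)},M_{\Psi^{(n_j)}}^{\eta^{(n_j)}}(G_\Psi^{(n_j)})}|^a+|\inner{G_\eta^{(n_j)},M_{\eta^{(n_j)}}(G_\eta^{(n_j)})}|^a$ must tend to $0$, and because $a>0$ each of the two inner products tends to $0$. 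Finally Assumption~\ref{assump:M-positive} gives $\alpha_\Psi\|G_\Psi^{(n_j)}\|^2\le\inner{G_\Psi^{(n_j)},M_{\Psi^{(n_j)}}^{\eta^{(n_j)}}(G_\Psi^{(n_j)})}\to 0$ and $\alpha_\eta\|G_\eta^{(n_j)}\|_{sF}^2\le\inner{G_\eta^{(n_j)},M_{\eta^{(n_j)}}(G_\eta^{(n_j)})}\to 0$, whence $\|\nabla_\Psi\mathcal{F}(\Psi^{(n_j)},\eta^{(n_j)})\|+\|\nabla_\eta\mathcal{F}(\Psi^{(n_j)},\eta^{(n_j)})\|_{sF}\to 0$, which is precisely the claimed $\varliminf=0$.

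No single step is a genuine obstacle; the care needed is in the bookkeeping for $\mathcal{C}_n$ --- its monotonicity, its representation as a convex combination, and the lower boundedness of $\mathcal{F}$ on the feasible set --- and in correctly identifying $\frac{\partial\bar{\mathcal{F}}_n}{\partial t_\Psi}(0,0)$ and $\frac{\partial\bar{\mathcal{F}}_n}{\partial t_\eta}(0,0)$ with $\operatorname{Re}\inner{G_\Psi^{(n)},D_\Psi^{(n)}}$ and $\operatorname{Re}\inner{G_\eta^{(n)},D_\eta^{(n)}}$, which rests on the tangent-space projection and on the rotation invariance of Theorem~\ref{prop:grad}. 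One should also note that when the two gradients do not both vanish at iterate $n$ the denominator in \eqref{eq:limsup-ge0} at that index is strictly positive by Assumption~\ref{assump:M-positive}, so the quotient there is well defined.
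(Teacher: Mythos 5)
Your argument is correct and follows essentially the same route as the paper: extract the summability of $\nu\sum_n\big(t_\Psi^{(n)}\frac{\partial\bar{\mathcal{F}}_n}{\partial t_\Psi}(0,0)+t_\eta^{(n)}\frac{\partial\bar{\mathcal{F}}_n}{\partial t_\eta}(0,0)\big)$ from \eqref{eq:armijo}--\eqref{eq:Armijo-C}, invoke \eqref{eq:liminf-step-ge0} to send the directional derivatives to zero, then \eqref{eq:limsup-ge0} to pass to a subsequence along which the denominator must vanish, and finally Assumption~\ref{assump:M-positive} to convert that into vanishing of the gradients. The only local difference is the route to summability: you follow the cleaner Zhang--Hager step of showing $\{\mathcal{C}_n\}$ is monotone non-increasing and bounded below (and you explicitly flag the lower boundedness of $\mathcal{F}$), whereas the paper telescopes the $\mathcal{F}$-differences directly and leaves the lower bound implicit --- both lead to the same bound.
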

	\begin{proof}
		Suppose
		\[
		\|\nabla_\Psi\mathcal{F}(\Psi^{n},\eta^{(n)})\|+\|\nabla_\eta\mathcal{F}(\Psi^{n},\eta^{(n)})\|_{sF}\ne 0,\forall n\in\mathbb{N},
		\]
		otherwise the conclusion is true. It follows from the definition of $\mathcal{C}_n$ that for any $n\ge 1$, there holds
		\begin{equation*}
			\mathcal{F}(\Psi^{(n+1)},\eta^{(n+1)})-\mathcal{F}(\Psi^{(n)},\eta^{(n)})=\mathcal{F}(\Psi^{(n+1)},\eta^{(n+1)})-\mathcal{C}_n-\frac{\alpha Q_{n-1}}{Q_n}(\mathcal{F}(\Psi^{(n)},\eta^{(n)})-\mathcal{C}_{n-1}).
		\end{equation*}
		Since 
		\[
		\mathcal{F}(\Psi^{(1)},\eta^{(1)})-\mathcal{F}(\Psi^{(0)},\eta^{(0)})=\mathcal{F}(\Psi^{(1)},\eta^{(1)})-\mathcal{C}_0,
		\]
		summing up all $n\in\mathbb{N}$ gives that
		\[
		\begin{aligned}
			&\mathrel{\phantom{=}}\sum_{n=0}^{\infty}(\mathcal{F}(\Psi^{n},\eta^{(n)})-\mathcal{F}(\Psi^{(n+1)},\eta^{(n+1)}))\\
			&=-\sum_{n=0}^{\infty}(\mathcal{F}(\Psi^{n+1},\eta^{(n+1)})-\mathcal{C}_n)+\sum_{n=0}^{\infty}\frac{\alpha Q_{n}}{Q_{n+1}}(\mathcal{F}(\Psi^{(n+1)},\eta^{(n+1)})-\mathcal{C}_n)\\
			&=-\sum_{n=0}^{\infty}\frac{1}{Q_{n+1}}(\mathcal{F}(\Psi^{n+1},\eta^{(n+1)})-\mathcal{C}_n)\\
			&\ge -\nu\sum_{n=0}^{\infty}\frac{1}{Q_{n+1}}\left(t_\Psi^{(n)}\frac{\partial\bar{\mathcal{F}}_n}{\partial t_\Psi}(0,0)+t_\eta^{(n)}\frac{\partial\bar{\mathcal{F}}_n}{\partial t_\eta}(0,0)\right).
		\end{aligned}
		\]
		Note that $Q_n=1+\sum\limits_{i=1}^n\alpha^i\in[1,\frac{1}{1-\alpha}]$, which together with \eqref{eq:descent} leads to
		\[
		-\sum_{n=0}^{\infty}t_\Psi^{(n)}\frac{\partial\bar{\mathcal{F}}_n}{\partial t_\Psi}(0,0)<+\infty,\quad	-\sum_{n=0}^{\infty}t_\eta^{(n)}\frac{\partial\bar{\mathcal{F}}_n}{\partial t_\eta}(0,0)<+\infty.
		\]
		Hence
		\[
		\lim_{n\to\infty}t_\Psi^{(n)}\frac{\partial\bar{\mathcal{F}}_n}{\partial t_\Psi}(0,0)=0,\quad\lim_{n\to\infty}t_\eta^{(n)}\frac{\partial\bar{\mathcal{F}}_n}{\partial t_\eta}(0,0)=0.
		\]
		Then by \eqref{eq:liminf-step-ge0}, we have
		\[
		\lim_{n\to\infty}\frac{\partial\bar{\mathcal{F}}_n}{\partial t_\Psi}(0,0)=0,\quad\lim_{n\to\infty}\frac{\partial\bar{\mathcal{F}}_n}{\partial t_\eta}(0,0)=0,
		\]
		which arrive at
		\[
		\lim_{n\to\infty}\left(-\frac{\partial\bar{\mathcal{F}}_n}{\partial t_\Psi}(0,0)-\frac{\partial\bar{\mathcal{F}}_n}{\partial t_\eta}(0,0)\right)=0.
		\]
		Since $-\frac{\partial\bar{\mathcal{F}}_n}{\partial t_\Psi}(0,0)-\frac{\partial\bar{\mathcal{F}}_n}{\partial t_\eta}(0,0)$ is a product of 
		\[
		\left|\inner{G_{\Psi}^{(n)},M_{\Psi^{(n)}}^{\eta^{(n)}}(G_{\Psi}^{(n)})}\right|^a+\left|\inner{G_{\eta}^{(n)},M_{\eta^{(n)}}(G_{\eta}^{(n)})}\right|^a
		\]
		and
		\[
		\frac{-\left(\inner{G_{\Psi}^{(n)},D_\Psi^{(n)}}+\inner{G_{\Psi}^{(n)},D_\eta^{(n)}}\right)}{\left|\inner{G_{\Psi}^{(n)},M_{\Psi^{(n)}}^{\eta^{(n)}}(G_{\Psi}^{(n)})}\right|^a+\left|\inner{G_{\eta}^{(n)},M_{\eta^{(n)}}(G_{\eta}^{(n)})}\right|^a},
		\]
		we obtain from \eqref{eq:limsup-ge0} that
		\[
		\varliminf_{n\to\infty}\left|\inner{G_{\Psi}^{(n)},M_{\Psi^{(n)}}^{\eta^{(n)}}(G_{\Psi}^{(n)})}\right|^a+\left|\inner{G_{\eta}^{(n)},M_{\eta^{(n)}}(G_{\eta}^{(n)})}\right|^a=0.
		\]
		Consequently, we get from \eqref{ineq:M-positive} that
		\[
		\varliminf_{n\to\infty}(\|\nabla_\Psi\mathcal{F}(\Psi^{n},\eta^{(n)})\|+\|\nabla_\eta\mathcal{F}(\Psi^{n},\eta^{(n)})\|_{sF})=0,
		\]
		which completes the proof.
	\end{proof}

	\begin{remark}\label{rmk:D-step}
		We see from the above proof that we may only need to consider the subsequence $\{n_j\}_{j\in\mathbb{N}}$ satisfying
		\begin{equation*}
			\lim_{j\to\infty}\frac{-\left(\inner{G_{\Psi}^{(n_j)},D_\Psi^{(n_j)}}+\inner{G_{\eta}^{(n_j)},D_\eta^{(n_j)}}\right)}{\left|\inner{G_{\Psi}^{(n_j)},M_{\Psi^{(n_j)}}^{\eta^{(n_j)}}(G_{\Psi}^{(n_j)})}\right|^a+\left|\inner{G_{\eta}^{(n_j)},M_{\eta^{(n_j)}}(G_{\eta}^{(n_j)})}\right|^a}\ne 0.
		\end{equation*}
		In addition, \eqref{eq:liminf-step-ge0} can be replaced by that \eqref{eq:step-lowup-bounded} holds for the above $\{n_j\}_{j\in\mathbb{N}}$ and
		\begin{equation}\label{eq:stepsum-infty}
			\sum_{j=0}^{\infty}t_\Psi^{(n_j)}=+\infty.
		\end{equation}
		We mention that \eqref{eq:stepsum-infty} is weaker than \eqref{eq:liminf-step-ge0} under the premise of \eqref{eq:step-lowup-bounded}.
	\end{remark}
	
	\begin{theorem}
		Suppose $\mathcal{F}$ is continuously differentiable in $H_\#^1(\Omega)\times \mathcal{S}^{N\times N}$ and $\mathcal{F}_\Psi$ is bounded, i.e., there exists $C_0>0$ such that
		\begin{equation}\label{eq:gradpsi-bounded}
			\|\mathcal{F}_\Psi(\Psi,\eta)\|\le C_0,~\forall(\Psi,\eta)\in\mathcal{M}_{\mathcal{B},N_G}^N\times\mathcal{S}^{N\times N},
		\end{equation}
		and Assumptions \ref{assump:ortho-prop} - \ref{assump:cn-lowerupper} hold true. Let $\{(D_\Psi^{(n)},D_\eta^{(n)})\}_{n\in\mathbb{N}}$ and $\{(\Psi^{(n)},\eta^{(n)})\}_{n\in\mathbb{N}}$ are generated by Algorithm \ref{algo:rpcg1} or Algorithm \ref{algo:rpcg2}. If $\{(D_\Psi^{(n)},D_\eta^{(n)})\}_{n\in\mathbb{N}}$ satisfies Assumption \ref{assump:limsup-Dbounded}, then there exists a positive sequence $\{(\theta_\Psi^{(n)},\theta_\eta^{(n)})\}_{n\in\mathbb{N}}$ such that either
		\[
		\|\nabla_\Psi\mathcal{F}(\Psi^{n},\eta^{(n)})\|=0,\,\|\nabla_\eta\mathcal{F}(\Psi^{n},\eta^{(n)})\|_{sF}=0
		\]
		for some $n>0$ or
		\[
		\varliminf_{n\to\infty}(\|\nabla_\Psi\mathcal{F}(\Psi^{n},\eta^{(n)})\|+\|\nabla_\eta\mathcal{F}(\Psi^{n},\eta^{(n)})\|_{sF})=0.
		\]
	\end{theorem}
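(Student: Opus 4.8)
The plan is to deduce the statement from Lemma~\ref{lem:converge} and Remark~\ref{rmk:D-step}, whose hypotheses are phrased only in terms of the abstract sequences $\{D_\Psi^{(n)},D_\eta^{(n)}\}$, $\{t_\Psi^{(n)},t_\eta^{(n)}\}$ and $\{\mathcal C_n\}$ and therefore apply verbatim to Algorithm~\ref{algo:rpcg1} and Algorithm~\ref{algo:rpcg2}. I argue by contradiction, assuming $\|\nabla_\Psi\mathcal F(\Psi^{(n)},\eta^{(n)})\|+\|\nabla_\eta\mathcal F(\Psi^{(n)},\eta^{(n)})\|_{sF}\ge\delta>0$ for all $n$. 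The energy is bounded below on the discrete manifold: writing $\mathcal F(\Psi,\eta)=\widetilde{\mathcal F}(\Psi,F_\eta)$ with $F_\eta\in\mathscr F_{\textup{occ}}$ as in Lemma~\ref{lem:energy-equal-occ} and using Proposition~\ref{prop:boundedbelow} together with the boundedness of $S$ on $[-\infty,+\infty]$ gives $\mathcal F(\Psi^{(n)},\eta^{(n)})\ge -C$. Hence, once the descent property \eqref{eq:descent} and the non-monotone condition \eqref{eq:armijo} are secured, the telescoping estimate inside the proof of Lemma~\ref{lem:converge} yields $t_\Psi^{(n)}\frac{\partial\bar{\mathcal F}_n}{\partial t_\Psi}(0,0)\to 0$ and $t_\eta^{(n)}\frac{\partial\bar{\mathcal F}_n}{\partial t_\eta}(0,0)\to 0$. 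Restricting to the subsequence $\{n_j\}$ of Remark~\ref{rmk:D-step} and using $\varliminf_j t_\Psi^{(n_j)}>0$ and $\varliminf_j t_\eta^{(n_j)}>0$, we get $\frac{\partial\bar{\mathcal F}_{n_j}}{\partial t_\Psi}(0,0)\to 0$ and $\frac{\partial\bar{\mathcal F}_{n_j}}{\partial t_\eta}(0,0)\to 0$; since $-\frac{\partial\bar{\mathcal F}_{n_j}}{\partial t_\Psi}(0,0)-\frac{\partial\bar{\mathcal F}_{n_j}}{\partial t_\eta}(0,0)$ equals the numerator of the quotient in \eqref{eq:restart-cond}, which along a further subsequence is bounded below by a positive constant, the denominator $\left|\inner{G_\Psi^{(n_j)},M_{\Psi^{(n_j)}}^{\eta^{(n_j)}}(G_\Psi^{(n_j)})}\right|^a+\left|\inner{G_\eta^{(n_j)},M_{\eta^{(n_j)}}(G_\eta^{(n_j)})}\right|^a$ must tend to $0$, so both inner products tend to $0$, and \eqref{ineq:M-positive} forces $\|\nabla_\Psi\mathcal F(\Psi^{(n_j)},\eta^{(n_j)})\|+\|\nabla_\eta\mathcal F(\Psi^{(n_j)},\eta^{(n_j)})\|_{sF}\to 0$, a contradiction. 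Thus it remains to choose $\{(\theta_\Psi^{(n)},\theta_\eta^{(n)})\}$ so that \eqref{eq:descent} and \eqref{eq:armijo} hold and $\varliminf_j t_\Psi^{(n_j)}>0$, $\varliminf_j t_\eta^{(n_j)}>0$, \eqref{eq:step-lowup-bounded} hold along $\{n_j\}$. (If the quotient in \eqref{eq:restart-cond} tends to $0$ everywhere, so that $\{n_j\}$ cannot be extracted, then restarts occur infinitely often and on those steps $D^{(n)}$ is the projected negative preconditioned gradient, so $\frac{\partial\bar{\mathcal F}_n}{\partial t_\Psi}(0,0)\to 0$ already gives $\|\nabla_\Psi\mathcal F\|\to 0$ there by \eqref{ineq:M-positive}, likewise for $\eta$, and this case is easier.)

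The descent property is built into the algorithms: after the tangent-space projection $P_{0,\Psi^{(n)}_{\mathrm k}}^*$ one has $\inner{(D_{\Psi_{\mathrm k}}^{(n)})^*\mathcal B\Psi^{(n)}_{\mathrm k}}=0$, hence $\frac{\partial\bar{\mathcal F}_n}{\partial t_\Psi}(0,0)=\operatorname{Re}\inner{G_\Psi^{(n)},D_\Psi^{(n)}}$ and $\frac{\partial\bar{\mathcal F}_n}{\partial t_\eta}(0,0)=\operatorname{Re}\inner{G_\eta^{(n)},D_\eta^{(n)}}$; the $\operatorname{sign}$-normalisation step of Algorithm~\ref{algo:rpcg1} (resp.\ the restart test of Algorithm~\ref{algo:rpcg2}) makes both quantities $\le 0$, and whenever one of them would vanish while the corresponding gradient does not, the restart branch sets $D^{(n)}$ to $-P_{0,\Psi^{(n)}_{\mathrm k}}^*(\widetilde G_{\Psi_{\mathrm k}}^{(n)})$, $-\widetilde G_{\eta_{\mathrm k}}^{(n)}$, which by \eqref{ineq:M-positive} is a strict descent direction; this yields \eqref{eq:descent} with the stated equivalences. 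To control the denominator of the quotient above and below along $\{n_j\}$ one combines \eqref{eq:gradpsi-bounded} and the compactness of $\prod_{\mathrm k\in\mathcal K}\mathcal M_{\mathcal B,\mathbb C,\mathrm k,N_G}^N$ (which bound $\|G^{(n)}\|$, and also $\|M_{\Psi^{(n)}}^{\eta^{(n)}}(G_\Psi^{(n)})\|$ since the occupation factor $F_{\eta_{\mathrm k}}$ cancels) with Assumption~\ref{assump:limsup-Dbounded} (which bounds $\|D^{(n_j)}\|$, hence — on restart steps — $\|M_{\eta^{(n_j)}}(G_\eta^{(n_j)})\|_{sF}$, and — on non-restart steps, via the condition quotient $\ge\gamma$ — the denominator directly), together with \eqref{ineq:M-positive} and the standing assumption (which bound $\inner{G_\Psi^{(n)},\widetilde G_\Psi^{(n)}}+\inner{G_\eta^{(n)},\widetilde G_\eta^{(n)}}$ below).

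For \eqref{eq:armijo}, the adaptive double step size strategy always returns $(t_\Psi^{(n)},t_\eta^{(n)})$ with $t_\Psi^{(n)}\|D_\Psi^{(n)}\|_\infty\le\theta_\Psi^{(n)}$, $t_\eta^{(n)}\|D_\eta^{(n)}\|_{sF,\infty}\le\theta_\eta^{(n)}$ and $\zeta_n(t_\Psi^{(n)},t_\eta^{(n)})\ge\nu$, i.e.\ with the quadratic surrogate $\tilde{\mathcal F}_n$ satisfying \eqref{eq:armijo}; since $\bar{\mathcal F}_n(0,0)=\mathcal F(\Psi^{(n)},\eta^{(n)})\le\mathcal C_n$, Assumption~\ref{assump:lipschitzF} and Assumption~\ref{assump:ortho-prop} give $\bar{\mathcal F}_n(t_\Psi^{(n)},t_\eta^{(n)})-\tilde{\mathcal F}_n(t_\Psi^{(n)},t_\eta^{(n)})\le C_L\big(t_\Psi^{(n)}\|D_\Psi^{(n)}\|+t_\eta^{(n)}\|D_\eta^{(n)}\|\big)^2$ with $C_L$ depending only on $L_0,C_1,C_2$. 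Choosing $\theta_\Psi^{(n)},\theta_\eta^{(n)}$ proportional to $\frac{-\operatorname{Re}(\inner{G_\Psi^{(n)},D_\Psi^{(n)}}+\inner{G_\eta^{(n)},D_\eta^{(n)}})}{1+\|D_\Psi^{(n)}\|^2+\|D_\eta^{(n)}\|^2}$ (capped by a fixed constant) — a positive quantity whenever $(\Psi^{(n)},\eta^{(n)})$ is not yet stationary — makes this correction absorbable into the slack in \eqref{eq:armijo}, so \eqref{eq:armijo} holds for $\bar{\mathcal F}_n$ itself; since this rule depends only on the current iterate there is no circularity with the definition of $\{n_j\}$. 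With this choice, along $\{n_j\}$ Assumption~\ref{assump:limsup-Dbounded} bounds $\|D^{(n_j)}\|$ while the above numerator is bounded below (by the denominator estimate just discussed), so $\theta_\Psi^{(n_j)},\theta_\eta^{(n_j)}$ stay above a fixed positive constant; consequently the caps $\theta_\Psi^{(n_j)}/\|D_\Psi^{(n_j)}\|_\infty$, $\theta_\eta^{(n_j)}/\|D_\eta^{(n_j)}\|_{sF,\infty}$ and any ``accepted initial guess'' step (bounded below by $\min(t_\Psi^{\min},t_\eta^{\min},\ldots)$) are bounded below, while the ``Improve'' step sizes $-c_{n,1}^{-1}\frac{\partial\bar{\mathcal F}_n}{\partial t_\Psi}(0,0)$, $-c_{n,2}^{-1}\frac{\partial\bar{\mathcal F}_n}{\partial t_\eta}(0,0)$ are bounded below because Assumption~\ref{assump:cn-bounded} bounds $c_{n,1}+c_{n,2}$ above on $\{n_j\}$ and at least one of the two partial derivatives is bounded below there, and Assumption~\ref{assump:cn-lowerupper} ensures \eqref{eq:step-lowup-bounded} needs no further shrinking. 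Hence $\varliminf_j t_\Psi^{(n_j)}>0$ and $\varliminf_j t_\eta^{(n_j)}>0$, which is what the reduction required.

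The main obstacle is this last point: one must steer between the two opposing demands on $\{(\theta_\Psi^{(n)},\theta_\eta^{(n)})\}$ — small enough that the quadratic surrogate governing the step-size choice majorises $\bar{\mathcal F}_n$ up to the Armijo slack (which ties $\theta^{(n)}$ to the possibly small directional derivatives at the current point), yet, along $\{n_j\}$, bounded away from $0$ so that the step sizes there do not collapse. Reconciling these is exactly where Assumptions~\ref{assump:cn-bounded}, \ref{assump:cn-lowerupper} and \ref{assump:limsup-Dbounded} are indispensable; a secondary technical point is that the preconditioned $\eta$-gradient need not be bounded globally (the pseudo-eigenvalues may drift under the translation invariance), but is bounded along $\{n_j\}$ thanks to Assumption~\ref{assump:limsup-Dbounded}.
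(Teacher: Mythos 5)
Your proof is correct in substance, but it takes a genuinely different route from the paper. The paper defines $(\theta_\Psi^{(n)},\theta_\eta^{(n)})$ \emph{implicitly} as the (lexicographic) supremum over thresholds for which the quadratic surrogate majorises $\bar{\mathcal F}_n$ up to half the Armijo slack on the admissible box $T_{\tilde\theta}$, and then runs an exhaustive four-way case analysis on which branch of the adaptive strategy produced $t_\Psi^{(n_j)}$; its most delicate Case 2 is the scenario where the cap $\theta_\Psi^{(n_j)}/\|D_\Psi^{(n_j)}\|\to 0$, which the paper resolves with the second-order Taylor estimate \eqref{eq:2order-approx} combined with the supremum characterisation of $\theta^{(n)}$ (which furnishes a nearby point $(t^{*,n_j}_\Psi,t^{*,n_j}_\eta)$ at which the residual bound fails). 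You instead prescribe an \emph{explicit} formula $\theta^{(n)}\propto\frac{-\operatorname{Re}(\inner{G^{(n)},D^{(n)}})}{1+\|D^{(n)}\|^2}$, verify \eqref{eq:armijo} by the same Taylor remainder bound (noting, as the paper does implicitly, that the ratio constraint \eqref{eq:step-lowup-bounded} is needed to reduce the remainder and the $c_{n,i}t_i^2$ terms to a multiple of $(t_\Psi\|D_\Psi\|+t_\eta\|D_\eta\|)^2$), and then argue by contradiction: under $\|\nabla_\Psi\mathcal F^{(n)}\|+\|\nabla_\eta\mathcal F^{(n)}\|_{sF}\ge\epsilon$, the directional derivative $-\operatorname{Re}\inner{G^{(n_j)},D^{(n_j)}}$ is bounded below along $\{n_j\}$ by \eqref{ineq:M-positive} together with the restart-quotient bound, and $\|D^{(n_j)}\|$ is bounded above by Assumption~\ref{assump:limsup-Dbounded}, so $\theta^{(n_j)}$ and therefore every candidate step-size branch stay bounded below, and Remark~\ref{rmk:D-step} with Lemma~\ref{lem:converge} delivers the contradiction. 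This short-circuits the paper's Case 2 by construction, at the cost of a more aggressive (smaller) threshold — whereas the paper's supremum gives the slackest admissible $\theta^{(n)}$ and must therefore entertain the cap-$\to 0$ scenario. Both routes use the same assumptions and the same Taylor estimate; yours is structurally cleaner, but you should make explicit that the proportionality constant $\kappa$ and the fixed cap on $\theta^{(n)}$ must be tuned once and for all (with $\kappa$ small enough relative to the Lipschitz/orthogonalisation constants $L_0,C_1,C_2$ and the parameters $\nu,\underline c,\bar c,\bar C$) so the absorption into the Armijo slack works uniformly in $n$, not just along $\{n_j\}$, since Lemma~\ref{lem:converge}'s telescoping argument requires \eqref{eq:armijo} at every step.
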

	\begin{proof}
		Let
		\[
		\begin{aligned}
			(\theta_\Psi^{(n)},\theta_\eta^{(n)})=\sup\bigg\{&(\tilde{\theta}_\Psi^{(n)},\tilde{\theta}_\eta^{(n)}):\bar{\mathcal{F}}_n(t_\Psi,t_\eta)-\bar{\mathcal{F}}_n(0,0)-t_\Psi\frac{\partial\bar{\mathcal{F}}_n}{\partial t_\Psi}(0,0)-t_\eta\frac{\partial\bar{\mathcal{F}}_n}{\partial t_\eta}(0,0)\\
			&-\frac12 c_{n,1} t_\Psi^2-\frac12 c_{n,2}t_\eta^2
			\le-\frac{\nu}{2}\left(t_\Psi^{(n)}\frac{\partial\bar{\mathcal{F}}_n}{\partial t_\Psi}(0,0)+t_\eta^{(n)}\frac{\partial\bar{\mathcal{F}}_n}{\partial t_\eta}(0,0)\right)\\
			&\text{for any~}(t_\Psi,t_\eta)\in T_{\tilde{\theta}_{\Psi}^{(n)},\tilde{\theta}_{\eta}^{(n)}},~\underline{c}\le\frac{\tilde{\theta}_\eta^{(n)}}{\|D_\eta^{(n)}\|_{sF}}\bigg/\frac{\tilde{\theta}_\Psi^{(n)}}{\|D_\Psi^{(n)}\|}\le\bar{c}\\
			&\text{when~}\|D_\Psi^{(n)}\|\ne0\text{~and~}\|D_\eta^{(n)}\|_{sF}\ne0,\\
			&\tilde{\theta}_\Psi^{(n)}=1\text{~when~}\|D_\Psi^{(n)}\|\ne0,~\text{and~}\tilde{\theta}_\eta^{(n)}=1\text{~when~}\|D_\eta^{(n)}\|_{sF}=0\bigg\},
		\end{aligned}
		\]
		where $\sup$ is in the sense of lexicographical order and
		\[
		T_{\tilde{\theta}_{\Psi}^{(n)},\tilde{\theta}_{\eta}^{(n)}}=\left\{(t_{\Psi},t_{\eta}):0\le t_\Psi\le\frac{\tilde{\theta}_\Psi^{(n)}}{\|D_\Psi^{(n)}\|},~0\le t_\eta\le\frac{\tilde{\theta}_\eta^{(n)}}{\|D_\eta^{(n)}\|_{sF}},~\text{and}~\underline{c}\le\frac{t_\eta}{t_\Psi}\le\bar{c} \right\}.
		\]
		Then we prove that the conclusion is valid when above $(\theta_\Psi^{(n)},\theta_\eta^{(n)})$ are taken.
		
		Suppose
		\[
		\|\nabla_\Psi\mathcal{F}(\Psi^{n},\eta^{(n)})\|+\|\nabla_\eta\mathcal{F}(\Psi^{n},\eta^{(n)})\|_{sF}\ne 0,~\forall n\in\mathbb{N},
		\]
		otherwise the conclusion is true. In Algorithm \ref{algo:rpcg1} or Algorithm \ref{algo:rpcg2}, it follows from Assumption \ref{assump:cn-lowerupper} that every $t_\Psi^{(n)}$ and $t_\eta^{(n)}$ satisfies
		\[
		\begin{aligned}
			\zeta_n(t_\Psi^{(n)},t_\eta^{(n)})&\ge \nu,\\
			t_\Psi^{(n)}\|D_\Psi^{(n)}\|\le \theta_\Psi^{(n)},~&t_\eta^{(n)}\|D_\eta^{(n)}\|_{sF}\le\theta_\eta^{(n)},
		\end{aligned}
		\]
		which implies
		\begin{align*}
			&\mathrel{\phantom{=}}\bar{\mathcal{F}}_n(0,0)+t_\Psi^{(n)}\frac{\partial\bar{\mathcal{F}}_n}{\partial t_\Psi}(0,0)+t_\eta^{(n)}\frac{\partial\bar{\mathcal{F}}_n}{\partial t_\eta}(0,0)+\frac12 c_{n,1} (t_\Psi^{(n)})^2+\frac12 c_{n,2}(t_\eta^{(n)})^2-\mathcal{C}_n\\
			&\le\nu \left(t_\Psi^{(n)}\frac{\partial\bar{\mathcal{F}}_n}{\partial t_\Psi}(0,0)+t_\eta^{(n)}\frac{\partial\bar{\mathcal{F}}_n}{\partial t_\eta}(0,0)\right).
		\end{align*}
		Then we obtain from the definition of $(\theta_\Psi^{(n)},\theta_\eta^{(n)})$ that
		\[
		\bar{\mathcal{F}}_n(t_\Psi^{(n)},t_\eta^{(n)})-\mathcal{C}_n\le\frac{\nu}{2}\left(t_\Psi^{(n)}\frac{\partial\bar{\mathcal{F}}_n}{\partial t_\Psi}(0,0)+t_\eta^{(n)}\frac{\partial\bar{\mathcal{F}}_n}{\partial t_\eta}(0,0)\right),
		\]
		i.e., \eqref{eq:armijo} holds.
		
		As shown in Remark \ref{rmk:D-step}, we only need to take subsequence $\{n_j\}_{j\in\mathbb{N}}$ satisfying
		\begin{equation*}
			\lim_{j\to\infty}\frac{-\left(\inner{G_{\Psi}^{(n_j)},D_\Psi^{(n_j)}}+\inner{G_{\eta}^{(n_j)},D_\eta^{(n_j)}}\right)}{\left|\inner{G_{\Psi}^{(n_j)},M_{\Psi^{(n_j)}}^{\eta^{(n_j)}}(G_{\Psi}^{(n_j)})}\right|^a+\left|\inner{G_{\eta}^{(n_j)},M_{\eta^{(n_j)}}(G_{\eta}^{(n_j)})}\right|^a}=\delta> 0
		\end{equation*}
		into account.
		
		We observe that the corresponding $t_\Psi^{(n_j)}$ has only four options:
		\[
		t_\Psi^{(n_j)}=\max(t_\Psi^{\text{initial}},t_\Psi^{\text{min}}),~
		t_\Psi^{(n_j)}=\frac{\theta_\Psi^{(n_j)}}{\|D_\Psi^{(n_j)}\|},~
		t_\Psi^{(n_j)}=-\frac{1}{c_{n_j,1}}\frac{\partial\mathcal{F}_{n_j}}{\partial t_\Psi}(0,0),~
		t_\Psi^{(n_j)}=\frac{1}{\underline{c}}t_\eta^{(n_j)}.
		\]
		Consequently, there exists a subsequence of $\{n_j\}_{j\in\mathbb{N}}$, which is also denoted by $\{n_j\}_{j\in\mathbb{N}}$ for convenience, such that one of the following four cases holds.
		
		\textbf{Case 1.} $t_\Psi^{(n_j)}=\max(t_\Psi^{\text{initial}},t_\Psi^{\text{min}})$. Obviously
		\[
		\sum_{j=0}^{\infty}t_\Psi^{(n_j)}\ge\sum_{j=0}^{\infty}t_\Psi^{\text{min}}=+\infty,
		\]
		which together with Remark \ref{rmk:D-step} yields the conclusion.
		
		\textbf{Case 2.} $t_\Psi^{(n_j)}=\frac{\theta_\Psi^{(n_j)}}{\|D_\Psi^{(n_j)}\|}$. If
		\[
		\varliminf_{j\to\infty} t_{\Psi}^{(n_j)}>0,
		\]
		then Lemma \ref{lem:converge} leads to the conclusion. Otherwise, there exists a subsequence of $\{n_j\}_{j\in\infty}$ also denoted by $\{n_j\}_{j\in\mathbb{N}}$ such that $\lim\limits_{j\to\infty}\frac{\theta_\Psi^{(n_j)}}{\|D_\Psi^{(n_j)}\|}=\lim\limits_{j\to\infty} t_\Psi^{(n_j)}=0$.
		
		We first prove that there holds
		\begin{equation}\label{eq:2order-approx}
			\begin{aligned}
				&\mathrel{\phantom{=}}\bar{\mathcal{F}}_n(t_\Psi,t_\eta)-\bar{\mathcal{F}}_n(0,0)-t_\Psi\frac{\partial\bar{\mathcal{F}}_n}{\partial t_\Psi}(0,0)-t_\eta\frac{\partial\bar{\mathcal{F}}_n}{\partial t_\eta}(0,0)-\frac12 c_{n,1} t_\Psi^2-\frac12 c_{n,2}t_\eta^2\\
				&=O(t_\Psi^2\|D_\Psi^{(n)}\|^2+t_\eta^2\|D_\eta^{(n)}\|_{sF}^2)
			\end{aligned}
		\end{equation}
		when $(t_\Psi,t_\eta)$ satisfies \eqref{eq:step-lowup-bounded}.
		
		For convenience, we denote by $\bar{\mathcal{F}}_{n,s}(t)=\bar{\mathcal{F}}_n(t,s t)$, $\Psi^{(n)}(t)=\operatorname{ortho}(\Psi^{(n)},D_\Psi^{(n)},t)$, $\eta^{(n)}(t)=\eta^{(n)}+t D_\eta^{(n)}$. By Assumptions \ref{assump:ortho-prop} and \ref{assump:lipschitzF}, \eqref{eq:gradpsi-bounded} and $\dot{\Psi}^{(n)}(0)=D_\Psi^{(n)}$, we have
		\begin{align*}
			&\mathrel{\phantom{=}}|\bar{\mathcal{F}}_{n,s}'(t)-\bar{\mathcal{F}}_{n,s}'(0)|\\
			&=\bigg|2\operatorname{Re}\inner{\mathcal{F}_\Psi(\Psi^{(n)}(t),\eta^{(n)}(s t)),\dot{\Psi}^{(n)}(t)}+s \inner{\nabla_\eta\mathcal{F}(\Psi^{(n)}(t),\eta^{(n)}(s t)),D_\eta^{(n)}}\\
			&\quad-2\operatorname{Re}\inner{\mathcal{F}_\Psi(\Psi^{(n)}(0),\eta^{(n)}(0)),\dot{\Psi}^{(n)}(0)}
			-s \inner{\nabla_\eta\mathcal{F}(\Psi^{(n)}(0),\eta^{(n)}(0)),D_\eta^{(n)}}\bigg|\\
			&\le\bigg|2\inner{\mathcal{F}_\Psi(\Psi^{(n)}(t),\eta^{(n)}(s t)),\dot{\Psi}^{(n)}(t)-\dot{\Psi}^{(n)}(0)}\bigg|\\
			&\quad+\bigg|2\inner{\mathcal{F}_\Psi(\Psi^{(n)}(t),\eta^{(n)}(s t))-\nabla_\Psi\mathcal{F}(\Psi^{(n)}(0),\eta^{(n)}(0)),\dot{\Psi}^{(n)}(0)}\bigg|\\
			&\quad
			+s \bigg|\inner{\nabla_\eta\mathcal{F}(\Psi^{(n)}(t),\eta^{(n)}(s t))-\nabla_\eta\mathcal{F}(\Psi^{(n)}(0),\eta^{(n)}(0)),D_\eta^{(n)}}\bigg|\\
			&\le 2C_0 C_2 t\|D_\Psi^{(n)}\|^2+2L_0(C_1 t\|D_\Psi^{(n)}\|+s t\|D_\eta^{(n)}\|_{sF})\|D_\Psi^{(n)}\|\\
			&\quad+L_0(C_1 s t\|D_\Psi^{(n)}\|+s^2 t\|D_\eta^{(n)}\|_{sF})\|D_\eta^{(n)}\|_{sF}
		\end{align*}
		for any $s,t>0$, where Proposition \ref{prop:sF} and Theorem \ref{prop:grad} are used in the last inequality. Applying Young inequality, we obtain that
		\begin{align*}
			&\mathrel{\phantom{=}}|\bar{\mathcal{F}}_{n,s}'(t)-\bar{\mathcal{F}}_{n,s}'(0)|\\
			&\le 2C_0 C_2 t\|D_\Psi^{(n)}\|^2+2L_0\left(C_1 t\|D_\Psi^{(n)}\|^2+\frac{t\|D_\Psi^{(n)}\|^2+s^2 t\|D_\eta^{(n)}\|_{sF}^2}{2} \right)\\
			&\quad+L_0\left(C_1\frac{t\|D_\Psi^{(n)}\|^2+s^2 t\|D_\eta^{(n)}\|_{sF}^2}{2}+s^2 t\|D_\eta^{(n)}\|_{sF}^2 \right)\\
			&\le \tilde{C} t(\|D_\Psi^{(n)}\|^2+ s^2 \|D_\eta^{(n)}\|_{sF}^2),
		\end{align*}
		where
		\[
		\tilde{C}=\max\left(2C_0 C_2+L_0+\frac{5}{2}C_1 L_0,2L_0+\frac{1}{2}C_1 L_0\right).
		\]
		Hence we have 
		\begin{align*}
			&\mathrel{\phantom{=}}\left|\bar{\mathcal{F}}_n(t,s t)-\bar{\mathcal{F}}_n(0,0)-t\frac{\partial\bar{\mathcal{F}}_n}{\partial t_\Psi}(0,0)-s t\frac{\partial\bar{\mathcal{F}}_n}{\partial t_\eta}(0,0)\right|\\
			&\le\int_0^{t}\left|\bar{\mathcal{F}}_{n,s}'(\tau)-\bar{\mathcal{F}}_{n,s}'(0)\right|\operatorname{d\!}\tau\\
			&\le \tilde{C} t^2(\|D_\Psi^{(n)}\|^2+ s^2 \|D_\eta^{(n)}\|_{sF}^2)
		\end{align*}
		for any $s,t>0$. Therefore, if $(t_\Psi,t_\eta)$ satisfies \eqref{eq:step-lowup-bounded}, then we arrive at \eqref{eq:2order-approx} by \eqref{eq:cn-bounded}.
		
		By the definition of $(\theta_\Psi^{(n_j)},\theta_\eta^{(n_j)})$, Assumption \ref{assump:M-positive}, \eqref{eq:step-lowup-bounded}, \eqref{ineq:M-positive} and \eqref{ineq:Dbounded}, for any $n_j$ large enough, there exists
		\begin{equation}\label{eq:t-star}
		t_\Psi^{*,n_j}\in\left(0,\frac{\theta_\Psi^{(n_j)}}{\|D_\Psi^{(n_j)}\|}+\frac{1}{n_j}\right),~t_\eta^{*,n_j}=s^*_{n_j} t_\Psi^{*,n_j},~\underline{c}\le s^*_{n_j}\le\bar{c}
		\end{equation}
		such that
			\begin{equation*}
				\begin{aligned}
					&\mathrel{\phantom{=}}O((t_\Psi^{*,n_j})^2\|D_\Psi^{(n_j)}\|^2+(t_\eta^{*,n_j})^2\|D_\eta^{(n_j)}\|_{sF}^2)\\
					&=\mathcal{F}_{n_j}(t_\Psi^{*,n_j},t_\eta^{*,n_j})-\mathcal{F}_{n_j}(0,0)-t_\Psi^{*,n_j}\frac{\partial\mathcal{F}_{n_j}}{\partial t_\Psi}(0,0)-t_\eta^{*,n_j}\frac{\partial\mathcal{F}_{n_j}}{\partial t_\eta}(0,0)\\
					&\quad-\frac12 c_{n,1} (t_\Psi^{*,n_j})^2-\frac12 c_{n,2}(t_\eta^{*,n_j})^2\\
					&>-\frac{\nu}{2}\left(t_\Psi^{*,n_j}\frac{\partial\mathcal{F}_{n_j}}{\partial t_\Psi}(0,0)+t_\eta^{*,n_j}\frac{\partial\mathcal{F}_{n_j}}{\partial t_\eta}(0,0)\right)\\
					&\ge\frac{-\frac{\nu}{2}\min(1,\underline{c})\min(1,{1}/{\bar{c}})\left(\frac{\partial\mathcal{F}_{n_j}}{\partial t_\Psi}(0,0)+\frac{\partial\mathcal{F}_{n_j}}{\partial t_\eta}(0,0)\right)}{\left|\inner{G_{\Psi}^{(n_j)},M_{\Psi^{(n_j)}}^{\eta^{(n_j)}}(G_{\Psi}^{(n_j)})}\right|^a+\left|\inner{G_{\eta}^{(n_j)},M_{\eta^{(n_j)}}(G_{\eta}^{(n_j)})}\right|^a}\\
					&\quad\cdot \frac{1}{\left(\|D_\Psi^{(n_j)}\|^2+\|D_\eta^{(n_j)}\|_{sF}^2\right)^{1/2}}\left((t_\Psi^{*,n_j})^2\|D_\Psi^{(n_j)}\|^2+(t_\eta^{*,n_j})^2\|D_\eta^{(n_j)}\|_{sF}^2\right)^{1/2}\\
					&\quad\cdot \left(\left|\inner{G_{\Psi}^{(n_j)},M_{\Psi^{(n_j)}}^{\eta^{(n_j)}}(G_{\Psi}^{(n_j)})}\right|^a+\left|\inner{G_{\eta}^{(n_j)},M_{\eta^{(n_j)}}(G_{\eta}^{(n_j)})}\right|^a\right)\\
					&\ge\frac{1}{4C}\nu\delta\min(1,\underline{c})\min(1,1/\bar{c})\min(a_\Psi^a,a_\eta^a)\left((t_\Psi^{*,n_j})^2\|D_\Psi^{(n_j)}\|^2+(t_\eta^{*,n_j})^2\|D_\eta^{(n_j)}\|_{sF}^2\right)^{1/2}\\
					&\quad\cdot \left(\|G_{\Psi}^{(n_j)}\|^{2a}+\|G_{\eta}^{(n_j)}\|_{sF}^{2a}\right),
				\end{aligned}
			\end{equation*}
		i.e.,
		\begin{equation}\label{eq:Otp0}
			\begin{aligned}
				&\mathrel{\phantom{=}}O(((t_\Psi^{*,n_j})^2\|D_\Psi^{(n_j)}\|^2+(t_\eta^{*,n_j})^2\|D_\eta^{(n_j)}\|_{sF}^2)^{1/2})\\
				&\ge\frac{\nu\delta\min(1,\underline{c})\min(1,1/\bar{c})\min(a_\Psi^a,a_\eta^a)}{4C}\cdot\left(\|G_{\Psi}^{(n_j)}\|^{2a}+\|G_{\eta}^{(n_j)}\|_{sF}^{2a}\right).
			\end{aligned}
		\end{equation}
		We see from $\lim\limits_{j\to\infty}\frac{\theta_\Psi^{(n_j)}}{\|D_\Psi^{(n_j)}\|}=0$, \eqref{ineq:Dbounded} and \eqref{eq:t-star}, that
		\[
		\lim_{j\to\infty} \left((t_\Psi^{*,n_j})^2\|D_\Psi^{(n_j)}\|^2+(t_\eta^{*,n_j})^2\|D_\eta^{(n_j)}\|_{sF}^2\right)^{1/2}=0.
		\]
		Let $j\to\infty$ in \eqref{eq:Otp0}, we get
		\[
		0\ge\frac{\nu\delta\min(1,\underline{c})\min(1,1/\bar{c})\min(a_\Psi^a,a_\eta^a)}{4C}\lim_{j\to\infty}\left(\left(\|G_{\Psi}^{(n_j)}\|^{2a}+\|G_{\eta}^{(n_j)}\|_{sF}^{2a}\right)\right),
		\]
		which produces the conclusion.
		
		\textbf{Case 3.} $t_\Psi^{(n_j)}=-\frac{1}{c_{n_j,1}}\frac{\partial\mathcal{F}_{n_j}}{\partial t_\Psi}(0,0)=-\frac{\inner{G_\Psi^{(n_j)},D_\Psi^{(n_j)}}}{c_{n_j,1}}$. We get from Assumption \ref{assump:cn-lowerupper} that $t_\eta^{(n_j)}$ has only three options:
		\[
		t_\eta^{(n_j)}=\max(t_\eta^{\text{initial}},t_\eta^{\text{min}}),~
		t_\eta^{(n_j)}=\frac{\theta_\eta^{(n_j)}}{\|D_\eta^{(n_j)}\|},~
		t_\eta^{(n_j)}=-\frac{1}{c_{n_j,2}}\frac{\partial\mathcal{F}_{n_j}}{\partial t_\eta}(0,0).
		\]
		If $t_\eta^{(n_j)}$ is one of the first two options, the similar arguments in Cases 1 and 2 can be applied to $t_\eta^{(n_j)}$. Thus, let $	t_\eta^{(n_j)}=-\frac{1}{c_{n_j,2}}\frac{\partial\mathcal{F}_{n_j}}{\partial t_\eta}(0,0)=-\frac{\inner{G_\eta^{(n_j)},D_\eta^{(n_j)}}}{c_{n_j,2}}$. Then we obtain from Assumptions \ref{assump:cn-bounded} and \ref{assump:limsup-Dbounded}, \eqref{eq:step-lowup-bounded}, and \eqref{ineq:M-positive} that
		\[
		\begin{aligned}
			&\mathrel{\phantom{=}}(1+\bar{c})t_\Psi^{(n_j)}\\
			&\ge t_\Psi^{(n_j)}+t_\eta^{(n_j)}\\
			&\ge-\frac{\inner{G_\Psi^{(n_j)},D_\Psi^{(n_j)}}+\inner{G_\eta^{(n_j)},D_\eta^{(n_j)}}}{\bar{C}(\|D_\Psi^{(n_j)}\|^2+\|D_\eta^{(n_j)}\|_{sF}^2)}\\
			&=\frac{-\left(\inner{G_\Psi^{(n_j)},D_\Psi^{(n_j)}}+\inner{G_\eta^{(n_j)},D_\eta^{(n_j)}}\right)}{\left|\inner{G_\Psi^{(n_j)},M_{\Psi^{(n_j)}}^{\eta^{(n_j)}}(G_\Psi^{(n_j)})}\right|^a+\left|\inner{G_\eta^{(n_j)},M_{\eta^{(n_j)}}(G_\eta^{(n_j)})}\right|^a}\\
			&\quad\cdot\frac{\left|\inner{G_\Psi^{(n_j)},M_{\Psi^{(n_j)}}^{\eta^{(n_j)}}(G_\Psi^{(n_j)})}\right|^a+\left|\inner{G_\eta^{(n_j)},M_{\eta^{(n_j)}}(G_\eta^{(n_j)})}\right|^a}{\bar{C}(\|D_\Psi^{(n_j)}\|^2+\|D_\eta^{(n_j)}\|_{sF}^2)}\\
			&\ge\frac{\delta\min(a_\Psi^a,a_\eta^a)}{2\bar{C}C^2}\left(\|G_\Psi^{(n_j)}\|^{2a}+\|G_\eta^{(n_j)}\|_{sF}^{2a}\right)
		\end{aligned}
		\]
		provided $j\gg1$. Consequently, either
		\[
		\sum_{j=0}^{\infty}t_\Psi^{(n_j)}=\infty
		\]
		or
		\[
		\varliminf_{j\to+\infty}(\|\nabla_\Psi\mathcal{F}(\Psi^{(n_j)},\eta^{(n_j)})\|^{2a}+\|\nabla_\eta\mathcal{F}(\Psi^{(n_j)},\eta^{(n_j)})\|_{sF}^{2a})=0,
		\]
		which leads to the conclusion.
		
		\textbf{Case 4.} $\displaystyle t_\eta^{(n_j)}=\frac{1}{\underline{c}}t_\eta^{(n_j)}$. We observe that the corresponding $t_\Psi^{(n_j)}$ has only two options:
		\[
		t_\eta^{(n_j)}=\max(t_\eta^{\text{initial}},t_\eta^{\text{min}}),~
		t_\eta^{(n_j)}=\frac{\theta_\eta^{(n_j)}}{\|D_\eta^{(n_j)}\|_{sF}}.
		\]
		Thus applying similar arguments in Cases 1 and 2 for $t_\Psi^{(n_j)}$ to $t_\eta^{(n_j)}$, we complete the proof.
	\end{proof}
	
	\section{Numerical experiments}\label{sec:numeraical}
	In this section, we apply the PCG method and its restarted versions to simulate several gold clusters (see Figure \ref{fig:au-structure1} for their configurations) and two complicated multicomponent periodic systems (see Figure \ref{fig:structure} for their configurations). We implement the PCG method and its restarted versions in the software package Quantum ESPRESSO \cite{quantum}. All calculations are carried out on LSSC-IV in the State Key Laboratory of Scientific and Engineering Computing of the Chinese Academy of Sciences.
	
	\begin{figure}[!htbp]
		\centering
		\caption{The configurations of the gold clusters}
		\label{fig:au-structure1}
		\subfloat[\ce{Au14}]{\includegraphics[width=0.3\linewidth]{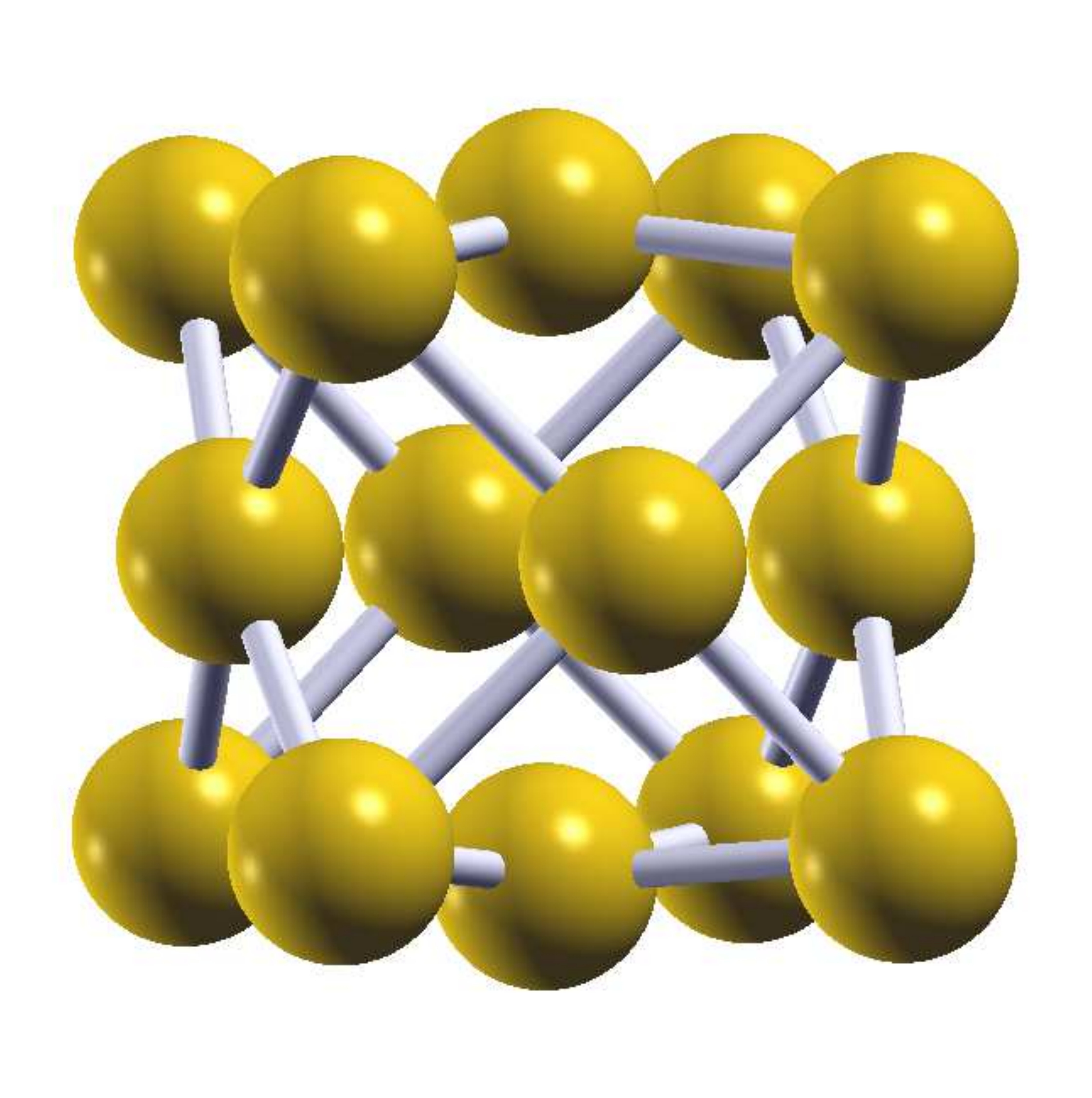}}~~~~~~~
		\subfloat[\ce{Au18}]{\includegraphics[width=0.3\linewidth]{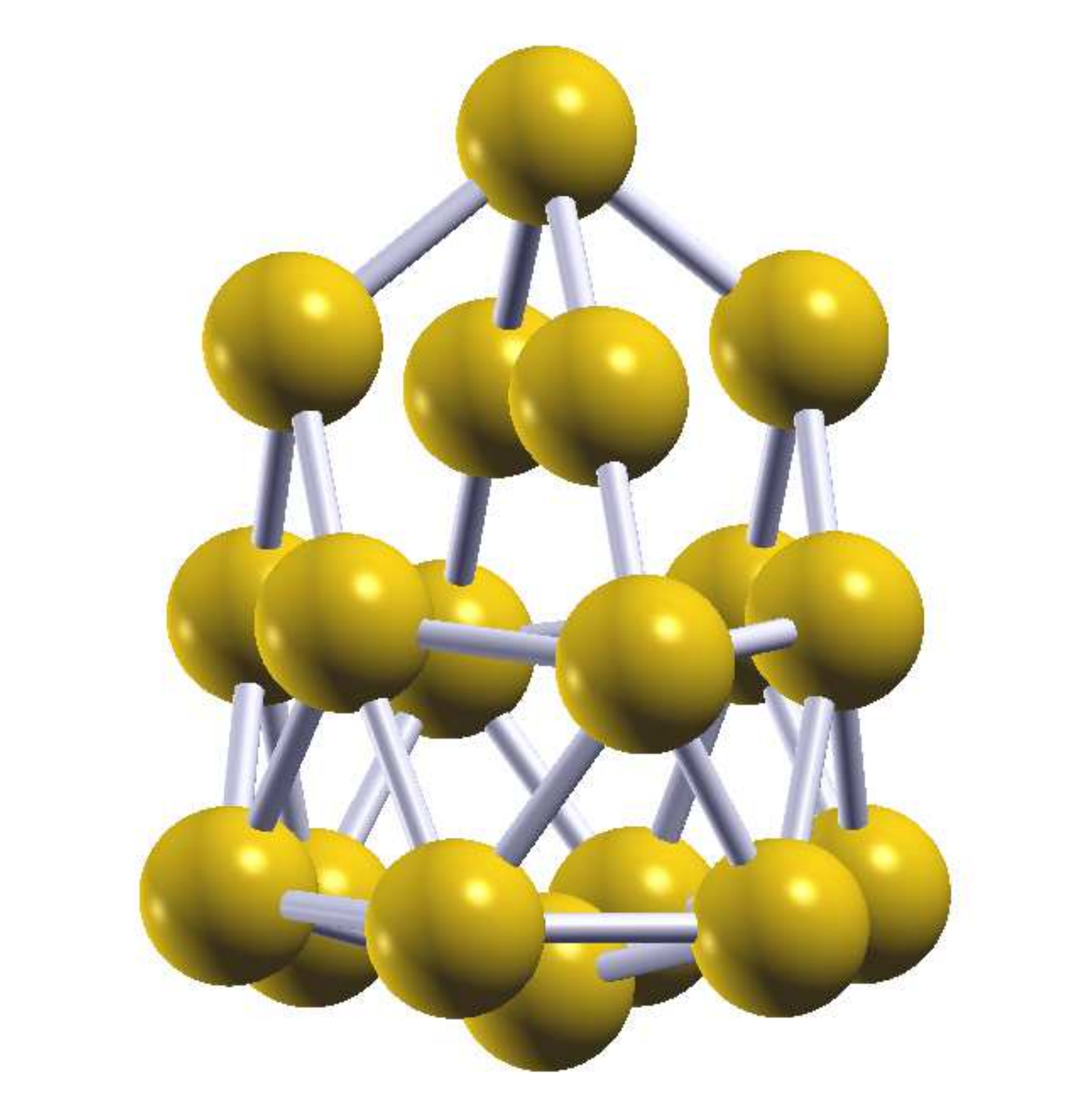}}~~~~~~~
		\subfloat[\ce{Au20}]{\includegraphics[width=0.3\linewidth]{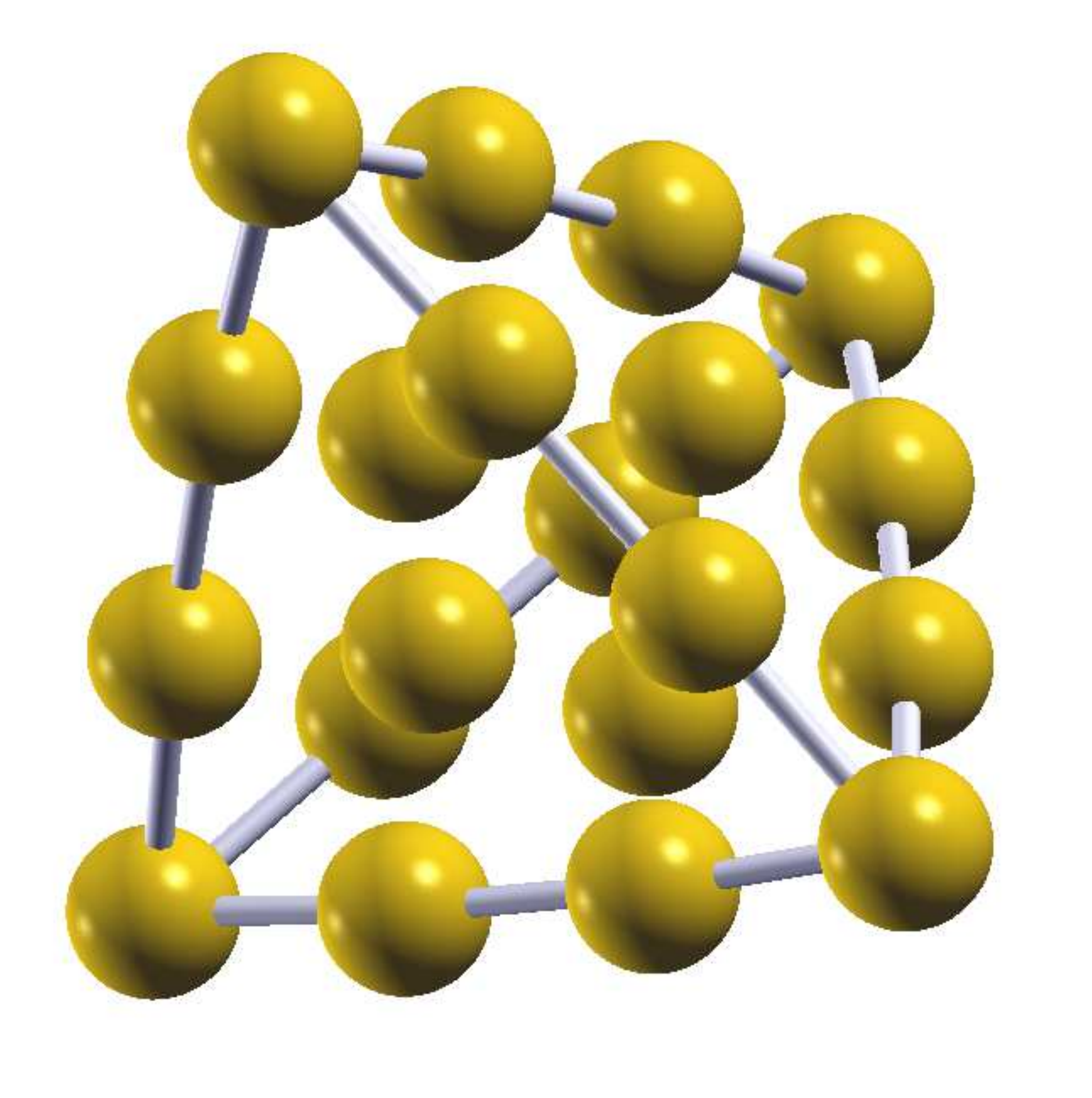}}
		\\
		\subfloat[\ce{Au32}]{\includegraphics[width=0.3\linewidth]{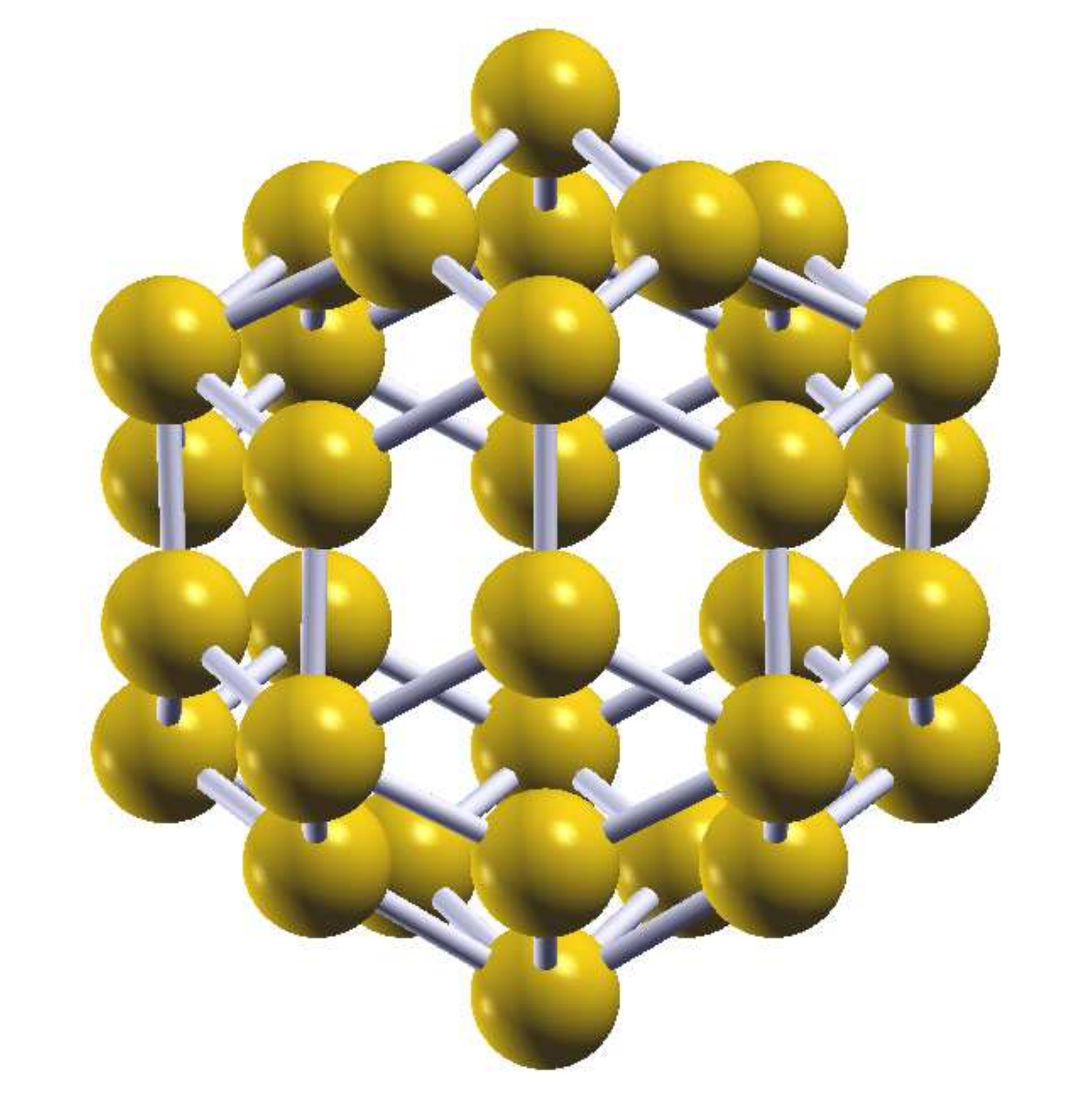}}~~~~~~~
		\subfloat[\ce{Au42}]{\includegraphics[width=0.3\linewidth]{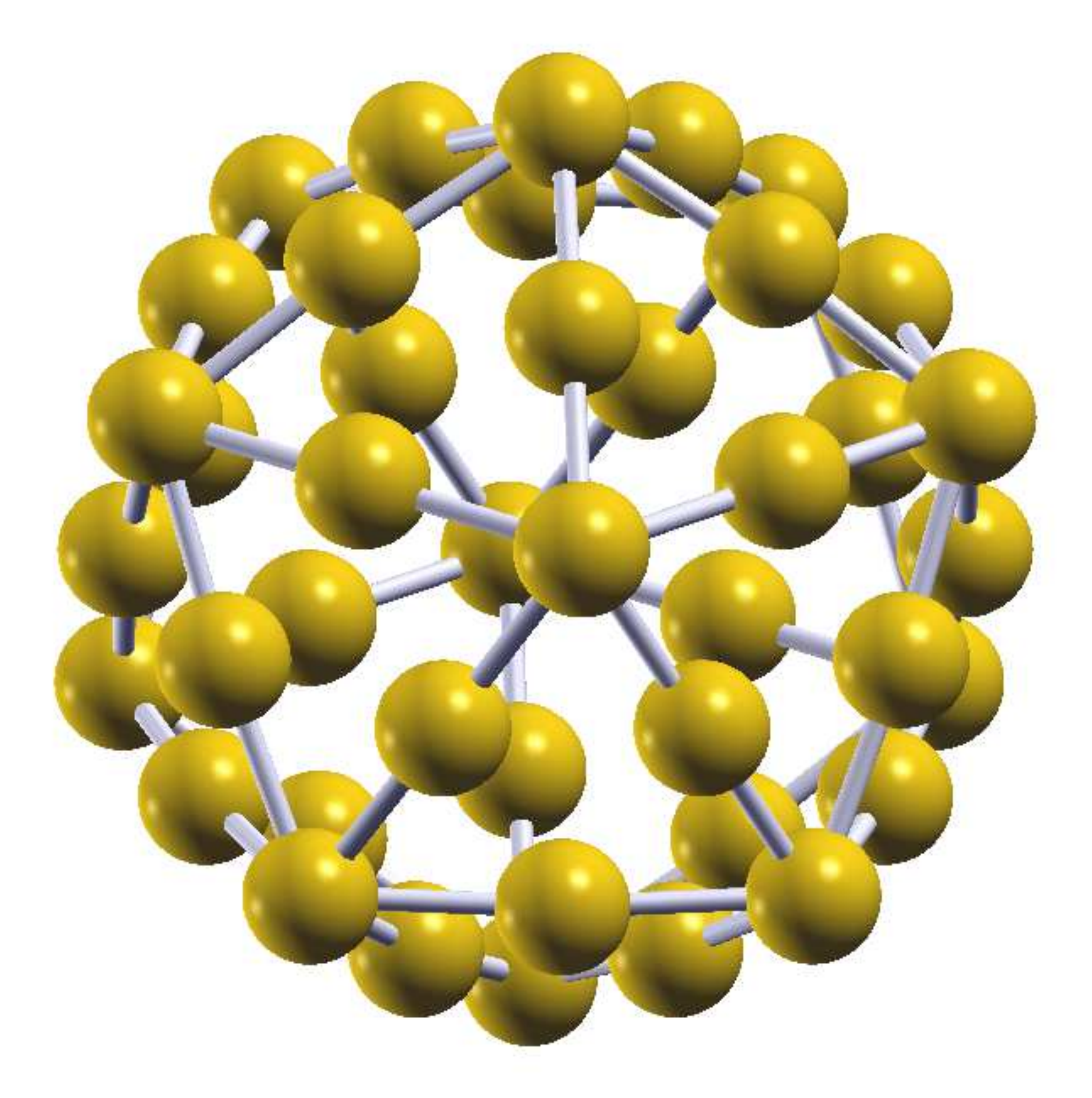}}~~~~~~~
		\subfloat[\ce{Au50}]{\includegraphics[width=0.3\linewidth]{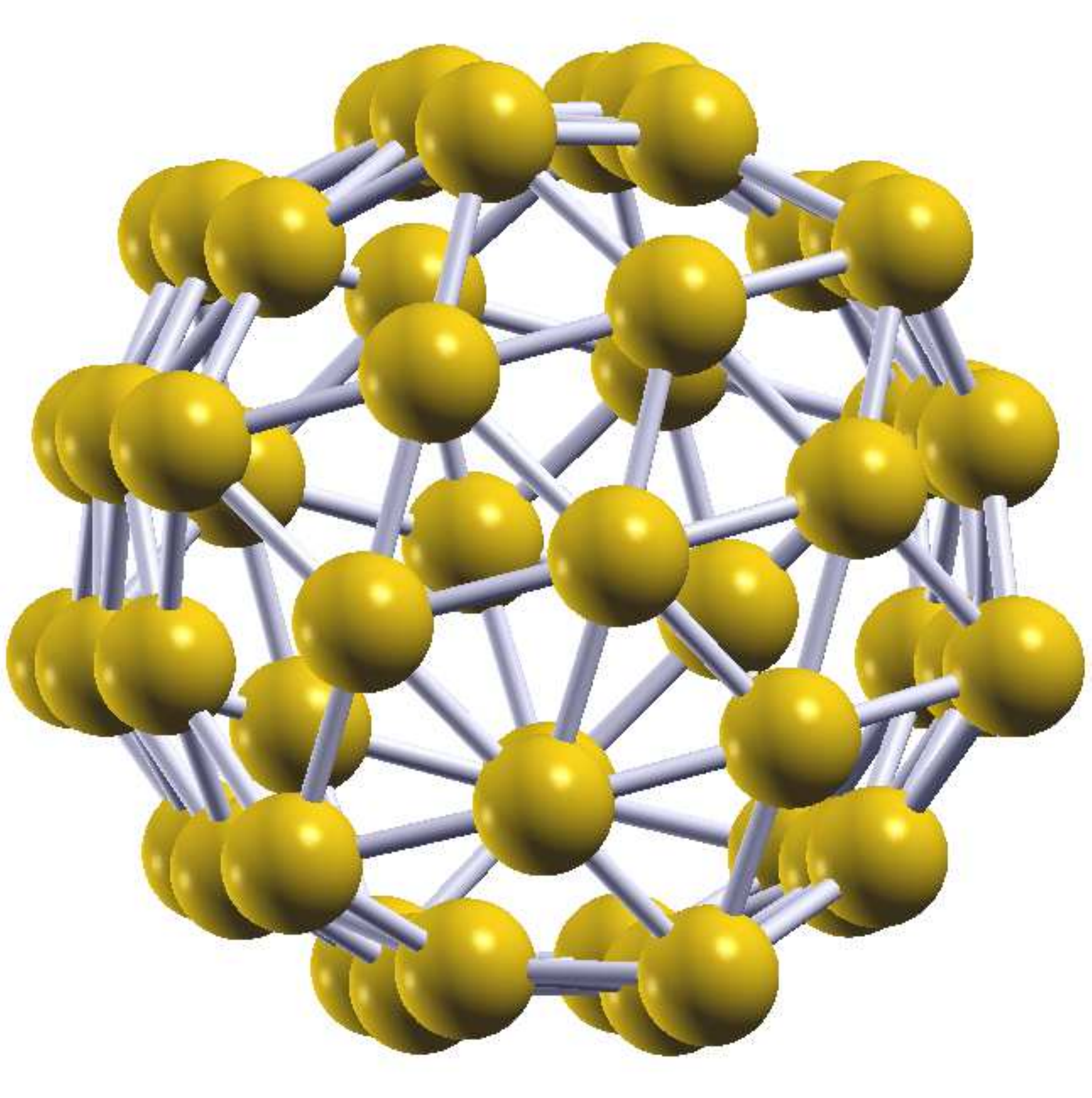}}
		\\
		\subfloat[\ce{Au72}]{\includegraphics[width=0.3\linewidth]{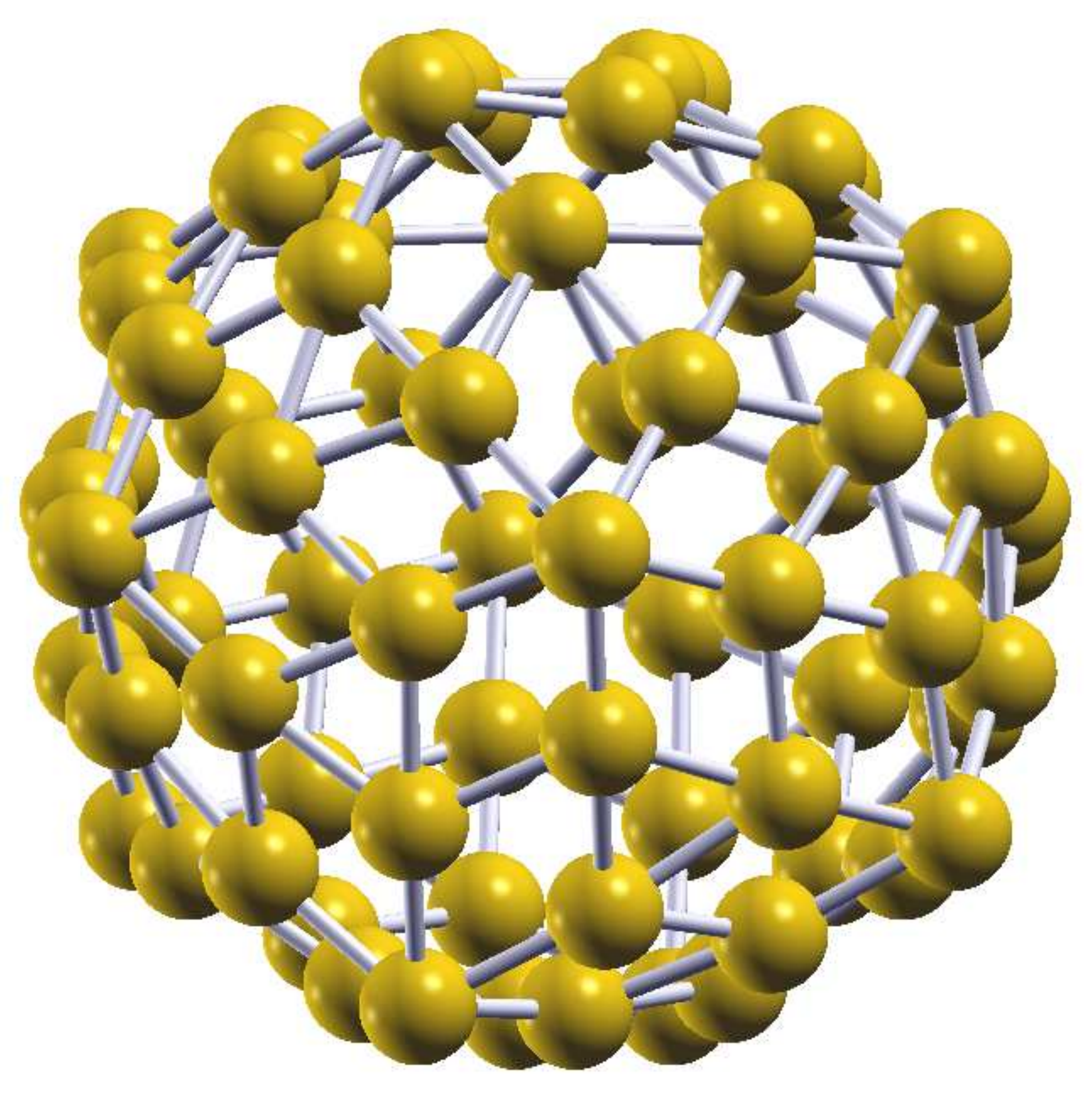}}~~~~~~~
		\subfloat[\ce{Au92}]{\includegraphics[width=0.3\linewidth]{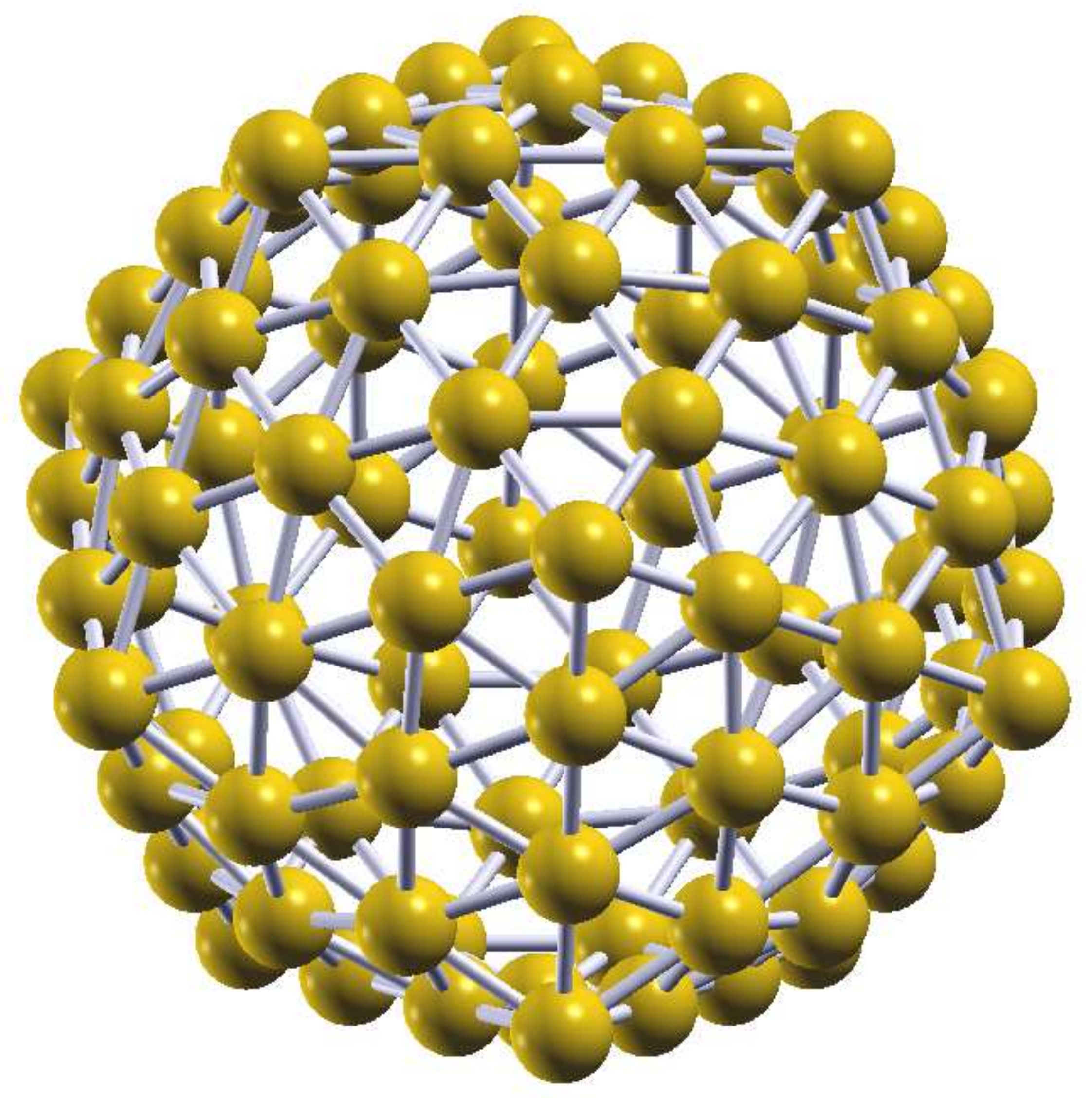}}~~~~~~~
		\subfloat[\ce{Au147}]{\includegraphics[width=0.3\linewidth]{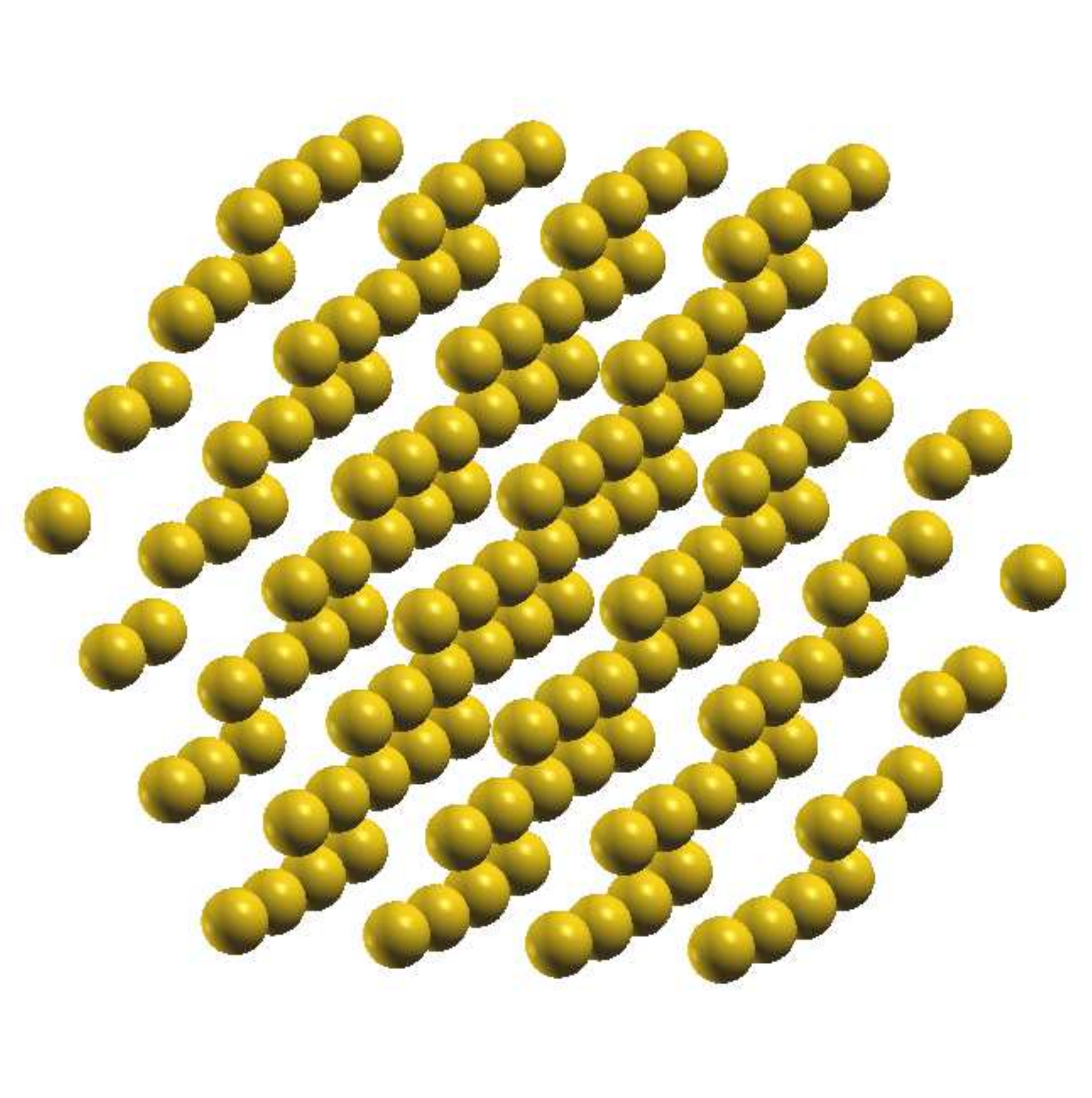}}~~~~~~~
		\\
		\subfloat[\ce{Au309}]{\includegraphics[width=0.3\linewidth]{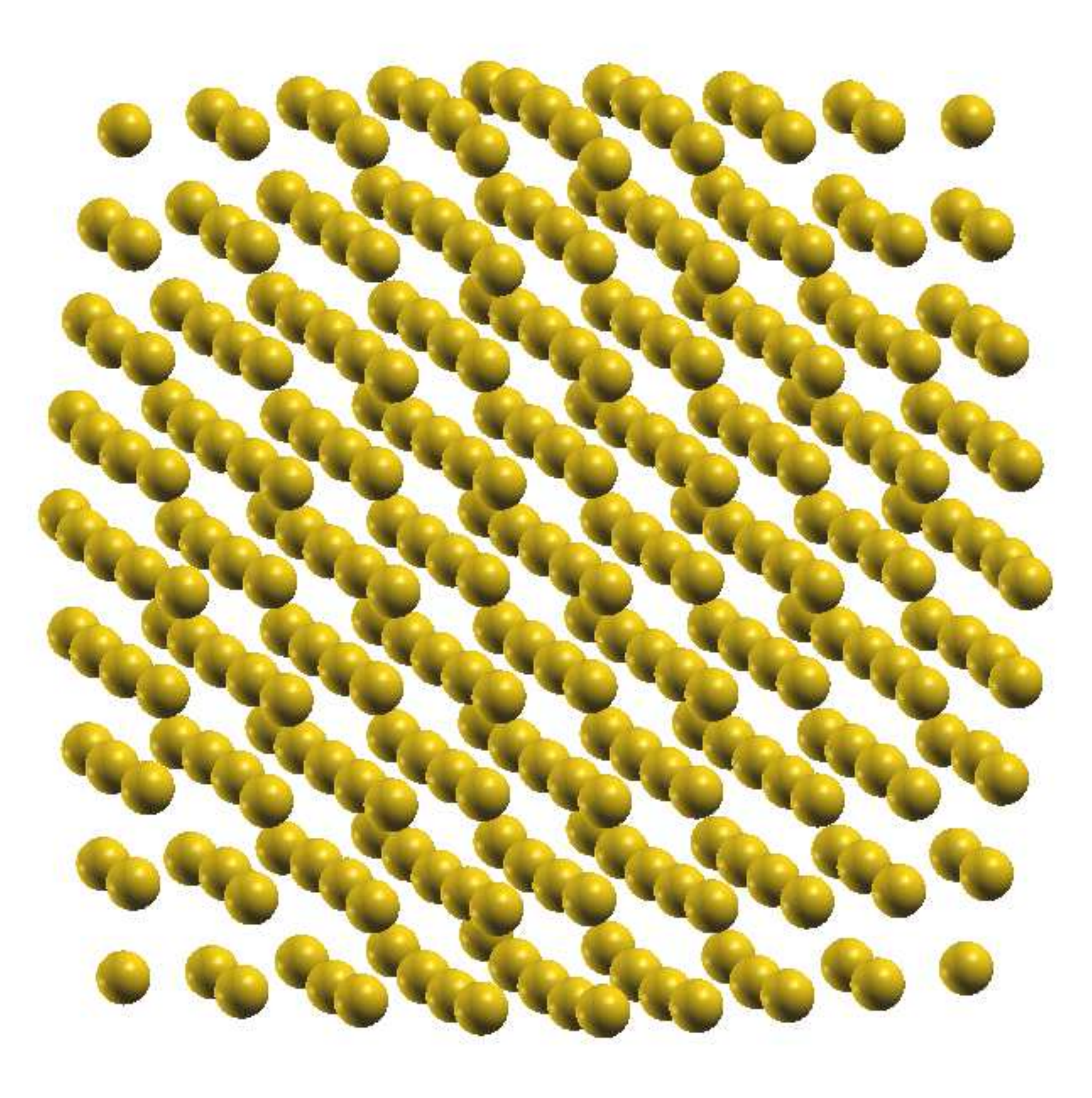}}~~~~~~~
	\end{figure}

	\begin{figure}[!htbp]
		\centering
		\caption{The configurations in the unit cell for the multicomponent periodic systems}
		\label{fig:structure}
		\subfloat[\ce{NdCu2Si2}]{\includegraphics[width=0.3\linewidth]{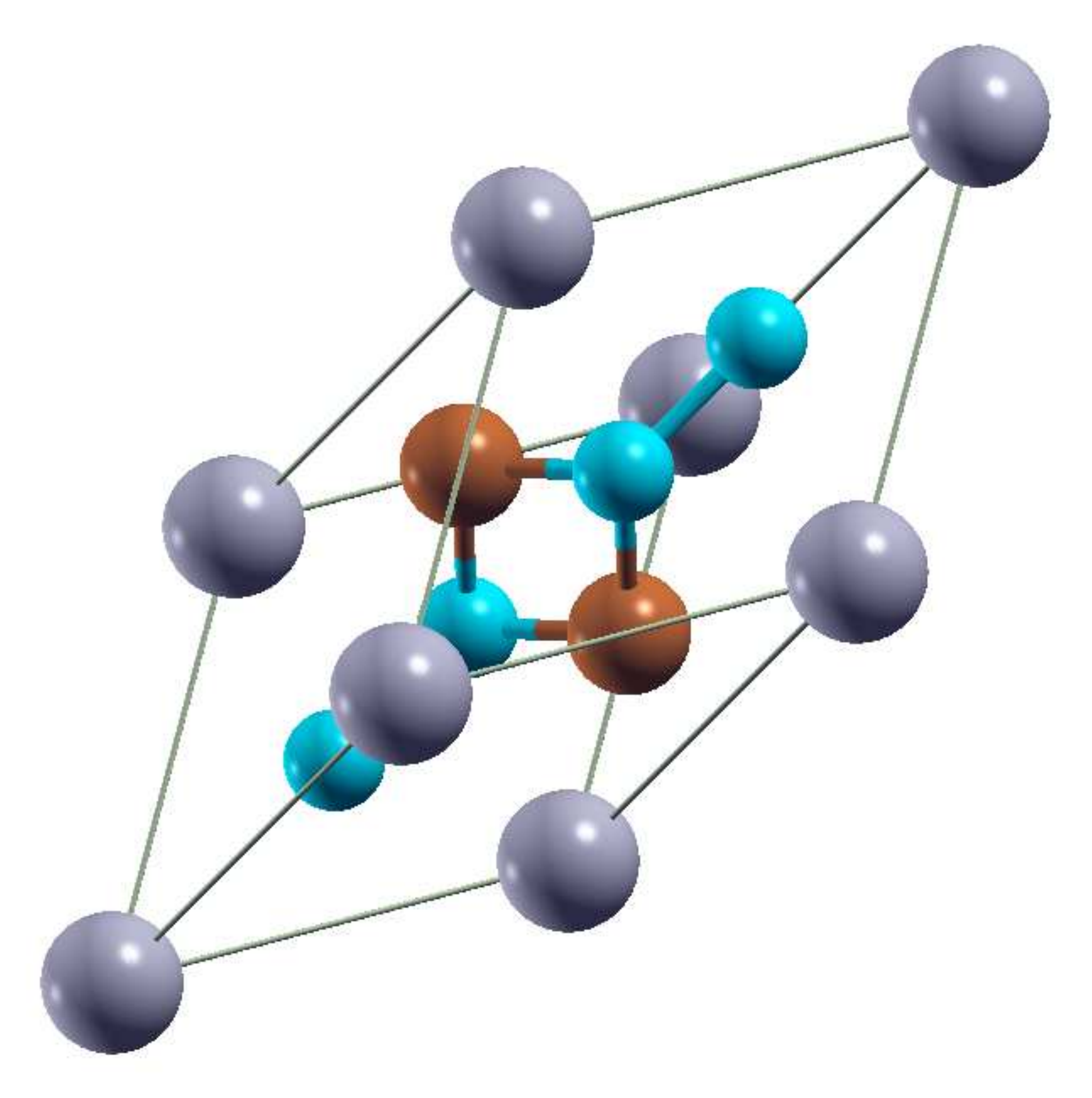}}~~~~~~~
		\subfloat[\ce{AlCrTiV}]{\includegraphics[width=0.3\linewidth]{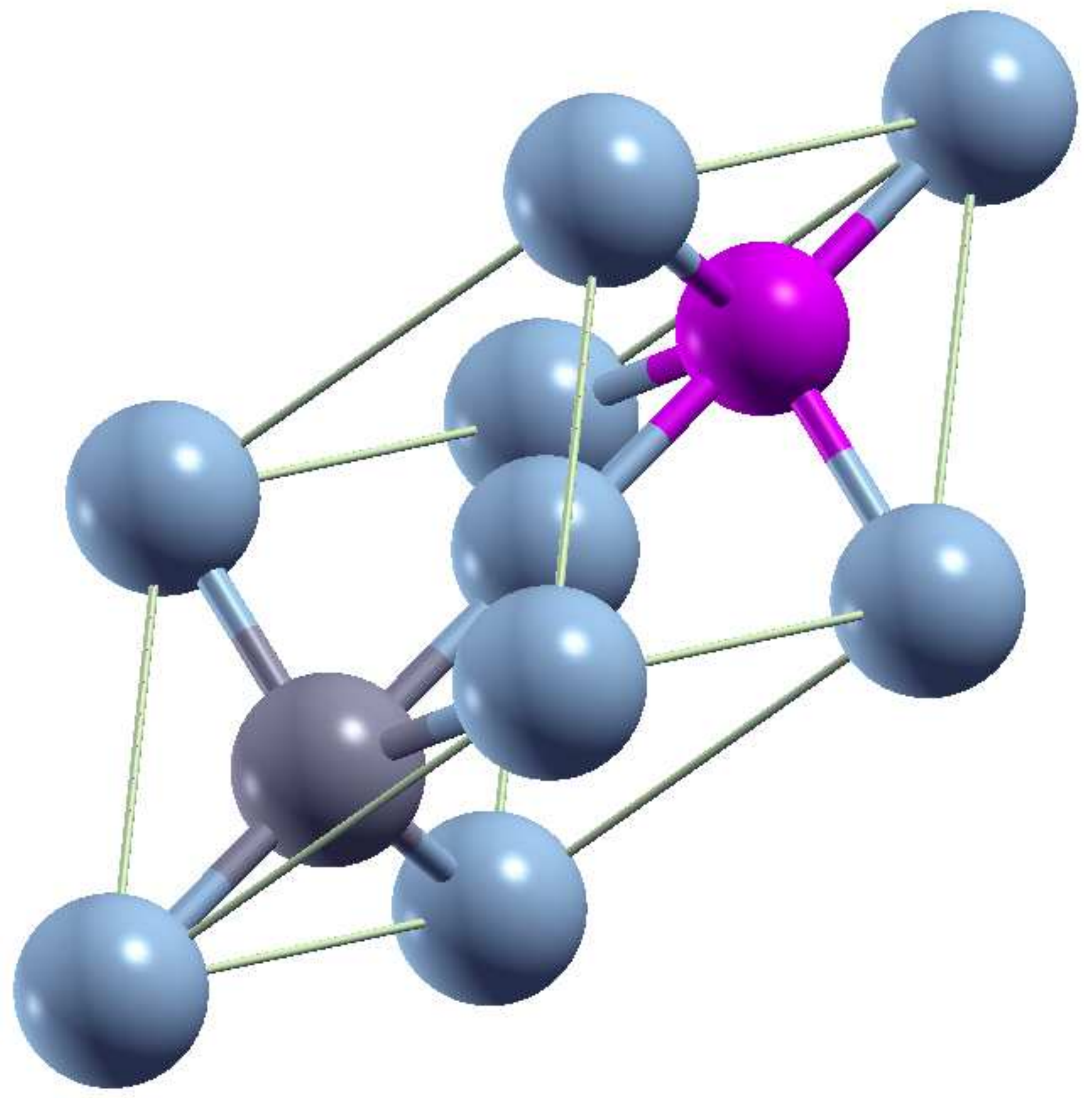}}~~~~~~~
	\end{figure}

	In our numerical experiments, we do not restrict the step sizes to satisfy \eqref{eq:step-lowup-bounded} for some given parameters $\underline{c}$ and $\bar{c}$, which can be viewed as $\underline{c}=0,~\bar{c}=+\infty$. Although  \eqref{eq:step-lowup-bounded} is necessary in our theoretical analysis, numerical results show that the step sizes can be more relaxed. Therefore, we directly apply the step size strategies \ref{step-energy}, \ref{step-d1} or \ref{step-d1-2} to get the step sizes in the numerical simulations. 
	
	In the following tables and figures, PCG-S1, PCG-S2 and PCG-S3 stand for the corresponding PCG method (Algorithm \ref{algo:pcg}) when the step size strategy \ref{step-energy}, strategy \ref{step-d1} and strategy \ref{step-d1-2} are applied, respectively. We denote the restarted versions Algorithm \ref{algo:rpcg1} and Algorithm \ref{algo:rpcg2} by PCG-S$\star$-r1 and PCG-S$\star$-r2 respectively, where $\star$ can be 1, 2 or 3. We mention that ``Error" for the SCF iteraions is the error of density and ``Error" for the PCG methods is $\left(\|\frac12\nabla_\Psi\mathcal{F}\|^2+\|\nabla_\eta\mathcal{F}\|_{sF}^2\right)^{1/2}$.
	
	We will compare our PCG methods with the SCF iterations. It is known that we have to solve a linear eigenvalue problem at each SCF iteration, for which the Davidson iterative diagonalization and the CG diagonalization are commonly used in Quantum ESPRESSO. The Davidson iterative diagonalization is faster, but the CG diagonalization uses less memory and is more robust \cite{quantum}.
	
	We list all the parameters used in our numerical experiments. The Ultrasoft pseudopotentials and the Gaussian smearing with $\sigma=0.05$ Ry are applied for gold clusters. We use the DY approach to get the CG parameter and the QR strategy as \eqref{eq:qr-c} for the orthogonalization operation.  We apply $\theta^{(n)}=\min\{0.8,\sqrt{\|D_\Psi^{(n)}\|_{\infty}^2+\|D_\eta^{(n)}\|_{sF,\infty}^2}\}$ for strategies \ref{step-energy} and \ref{step-d1} and $\theta_\Psi^{(n)}=\min\{0.8,\|D_\Psi^{(n)}\|_{\infty}\},~\theta_\eta^{(n)}=\min\{0.8,\|D_\eta^{(n)}\|_{sF,\infty}\}$ for strategy \ref{step-d1-2}. We set $t^{\text{min}}=t^{\text{min}}_{\Psi}=t^{\text{min}}_{\eta}=0.001$ and initial trial step sizes $t^{\text{trial}}=t_\Psi^{\text{trial}}=t_\eta^{\text{trial}}=0.4$. For the restarted versions, we set $\gamma=0.5$ and $a=1$. The convergence criterion is 
	\[
	\left(\|\frac12\nabla_\Psi\mathcal{F}\|^2+\|\nabla_\eta\mathcal{F}\|^2_{sF}\right)^{1/2}<1.0\times 10^{-5}
	\]
	for the PCG method and its restarted versions, and the convergence threshold for density is $1.0\times 10^{-9}$ for the SCF iterations. For the SCF iterations, We apply the Broyden mixing method. The initial guess for the wavefunctions is generated by the superposition of atomic orbitals \cite{quantum} if not specified.
	
	We see that whether or not to restart has almost no effect for the simulation of gold clusters for the strategies \ref{step-d1} and \ref{step-d1-2}. As a result, we mainly show the numerical results obtained by the PCG method (Algorithm \ref{algo:pcg}) for gold clusters. In addition, we will also mention some improvement of the restarting approach for the strategy \ref{step-energy} in Figure \ref{fig:Au92}.
	
	First, we take a look at the results of all the gold clusters. The results obtained by the PCG method (Algorithm \ref{algo:pcg}) based on different step size strategies are listed in Table \ref{tab:differentStep1}. In Table \ref{tab:differentStep1}, ``Iter." means the number of iterations required to terminate the algorithm and ``A.T.P.I" is the average CPU time required per iteration. As shown in Table \ref{tab:differentStep1}, the strategy \ref{step-d1-2} with different step sizes for $\Psi$ and $\eta$ is indeed the best. More precisely, the strategy \ref{step-d1-2} needs less iteration and the CPU time to achieve similar accuracy than the strategies \ref{step-energy} and \ref{step-d1} with same step sizes for $\Psi$ and $\eta$, especially for large systems. We see that the strategies \ref{step-d1} and \ref{step-d1-2} are more expensive than the strategy \ref{step-energy} per iteration. However, by comparing the strategies \ref{step-energy} and \ref{step-d1}, we see that the strategy \ref{step-energy} need more iterations to achieve the same accuracy than the strategy \ref{step-d1}. Even the iterations for \ce{Au18}, \ce{Au72}, \ce{Au92}, \ce{Au147} and \ce{Au309} do not converge after 200 iterations under the strategy \ref{step-energy}. We point out that whether or not to restart has no effect on the strategies \ref{step-d1} and \ref{step-d1-2} for the simulation of these gold clusters under the convergence criterion discussed in this section. However, it will improve the convergence of the iteration a little for the strategy \ref{step-energy}. If we restart the PCG method as Algorithm \ref{algo:rpcg1}, the calculations for \ce{Au18}, \ce{Au72} and \ce{Au92} can also converge under the strategy \ref{step-energy}. Due to limited space, we only show the results of \ce{Au92} obtained by the restarted PCG method I (Algorithm \ref{algo:rpcg1}) later.
	
	\begin{table}[!htbp]
		\centering
		\caption{The numerical results for gold clusters obtained by the PCG method (Algorithm \ref{algo:pcg}) based on different step size strategies.}
		\label{tab:differentStep1}
			\begin{tabular}{|c|ccccc|}
				\hline
				Algorithm                & Energy (Ry)             & Iter.    & Error        & CPU time (s)    & A.T.P.I (s)    \\ \hline
				\multicolumn{6}{|c|}{\ce{Au14} \quad $ N_G = 322453 $ \quad $ N = 92 $ \quad $ cores = 36 $}  \\ \hline
				PCG-S1      & -1194.49861028     & 90      & 9.3E-6    & 587.0      & 6.52         \\
				PCG-S2      & -1194.49861028     & 50      & 9.7E-6    & 397.7      & 7.95         \\
				PCG-S3      & -1194.49861028     & 37      & 8.5E-6    & 299.2      & 8.09         \\ \hline
				\multicolumn{6}{|c|}{\ce{Au18} \quad $ N_G = 322453 $ \quad $ N = 119 $ \quad $ cores = 36 $} \\ \hline
				PCG-S1      & -1536.01945578     & 200     & 1.4E-5    & 1626.6     & 8.13         \\
				PCG-S2      & -1536.01945578     & 62      & 7.2E-6    & 647.3      & 10.44        \\
				PCG-S3      & -1536.01945578     & 37      & 8.9E-6    & 384.6      & 10.39        \\ \hline
				\multicolumn{6}{|c|}{\ce{Au20} \quad $ N_G = 322453 $ \quad $ N = 132 $ \quad $cores= 36 $}   \\ \hline
				PCG-S1      & -1706.76524000     & 109     & 9.1E-6    & 963.3      & 8.84         \\
				PCG-S2      & -1706.76524000     & 55      & 8.3E-6    & 621.6      & 11.30        \\
				PCG-S3      & -1706.76524000     & 38      & 9.1E-6    & 429.7      & 11.31        \\ \hline
				\multicolumn{6}{|c|}{\ce{Au32} \quad $ N_G = 429409 $ \quad $ N = 211 $ \quad $cores= 36 $}   \\ \hline
				PCG-S1      & -2731.11762824     & 90      & 8.3E-6    & 1808.6     & 20.10        \\
				PCG-S2      & -2731.11762824     & 48      & 8.4E-6    & 1270.7     & 26.47        \\
				PCG-S3      & -2731.11762824     & 38      & 8.3E-6    & 1019.1     & 26.82        \\ \hline
				\multicolumn{6}{|c|}{\ce{Au42} \quad $ N_G = 429409 $ \quad $ N = 277 $ \quad $ cores = 36 $} \\ \hline
				PCG-S1      & -3584.66580292     & 78      & 1.0E-6    & 2133.8     & 27.36        \\
				PCG-S2      & -3584.66580292     & 55      & 5.8E-6    & 2011.6     & 36.57        \\
				PCG-S3      & -3584.66580292     & 39      & 8.5E-6    & 1390.6     & 35.66        \\ \hline
				\multicolumn{6}{|c|}{\ce{Au50}\quad $N_G=429409$\quad $N=330$\quad $cores=36$}                \\ \hline
				PCG-S1      & -4267.69535810     & 114     & 6.8E-6    & 3700.7     & 32.46        \\
				PCG-S2      & -4267.69535810     & 58      & 9.4E-6    & 2626.5     & 45.28        \\
				PCG-S3      & -4267.69535810     & 39      & 9.2E-6    & 1786.2     & 45.80        \\ \hline
				\multicolumn{6}{|c|}{\ce{Au72}\quad $N_G=556667$\quad $N=475$\quad $cores=36$}    \\ \hline
				PCG-S1    & -6145.78233806    & 200   & 1.1E-4  & 13959.2  & 69.80     \\
				PCG-S2    & -6145.78233806    & 89    & 9.8E-6  & 8557.1   & 96.15     \\
				PCG-S3    & -6145.78233806    & 40    & 9.0E-6  & 3760.7   & 94.02     \\ \hline
				\multicolumn{6}{|c|}{\ce{Au92}\quad $N_G=556667$\quad $N=607$\quad $cores=36$}    \\ \hline
				PCG-S1    & -7853.07110320    & 200   & 2.3E-5  & 19697.0  & 98.49     \\
				PCG-S2    & -7853.07110320    & 91    & 7.9E-6  & 12229.9  & 134.39    \\
				PCG-S3    & -7853.07110320    & 40    & 9.8E-6  & 5535.4   & 138.39    \\ \hline
				\multicolumn{6}{|c|}{\ce{Au147}\quad $N_G=1320073$\quad $N=971$\quad $cores=72$}  \\ \hline
				PCG-S1    & -12547.62980551   & 200   & 4.4E-5  & 37056.7  & 185.28    \\
				PCG-S2    & -12547.62980551   & 88    & 9.7E-6  & 23166.6  & 263.26    \\
				PCG-S3    & -12547.62980551   & 42    & 9.0E-6  & 11193.5  & 266.51    \\ \hline
				\multicolumn{6}{|c|}{\ce{Au309}\quad $N_G=1320073$\quad $N=2040$\quad $cores=72$} \\ \hline
				PCG-S1    & -26379.41930504   & 200   & 1.5E-4  & 134638.7 & 673.19    \\
				PCG-S2    & -26379.41930504   & 124   & 9.1E-6  & 119025.8 & 959.89    \\
				PCG-S3    & -26379.41930507   & 51    & 6.8E-6  & 49831.2  & 977.08    \\ \hline
			\end{tabular}
	\end{table}
	
	To compare the three step size strategies more clearly, we take \ce{Au92} as an example and show the convergence curves for $\mathcal{F}-\mathcal{F}_{\text{min}}$, $\frac12\|\nabla_\Psi\mathcal
	F\|$ and $\|\nabla_{\eta}\mathcal{F}\|_{sF}$ in Figure \ref{fig:Au92}, where $\mathcal{F}_{\text{min}}$ is a high-accuracy approximation of the exact total energy. We also illustrate the benefit of the restarting approach for the strategy \ref{step-energy}. First, the strategy \ref{step-d1-2} is indeed faster than the other two strategies. Secondly, by comparing the convergence curves for the error of the energy, we see that the strategy \ref{step-energy} is not much different from the strategy \ref{step-d1}, and the strategy \ref{step-energy} seems to be better when the energy has not converged. But there may be some fluctuation for the strategy \ref{step-energy} when the energy almost converges. From the convergence curves for $\frac12\|\nabla_\Psi\mathcal
	F\|$ and $\|\nabla_{\eta}\mathcal{F}\|_{sF}$, we see that the descent speed of the gradient obtained by the strategy \ref{step-energy} slows down suddenly when the energy almost converges and then is much smaller than the strategy \ref{step-d1}. Finally, by comparing PCG-S1 and PCG-S1-r1, we find that the restarting approach does improve the convergence of the iteration for the strategy \ref{step-energy}.
	
	\begin{figure}[!htbp]
		\centering
		\includegraphics[width=0.68\textwidth]{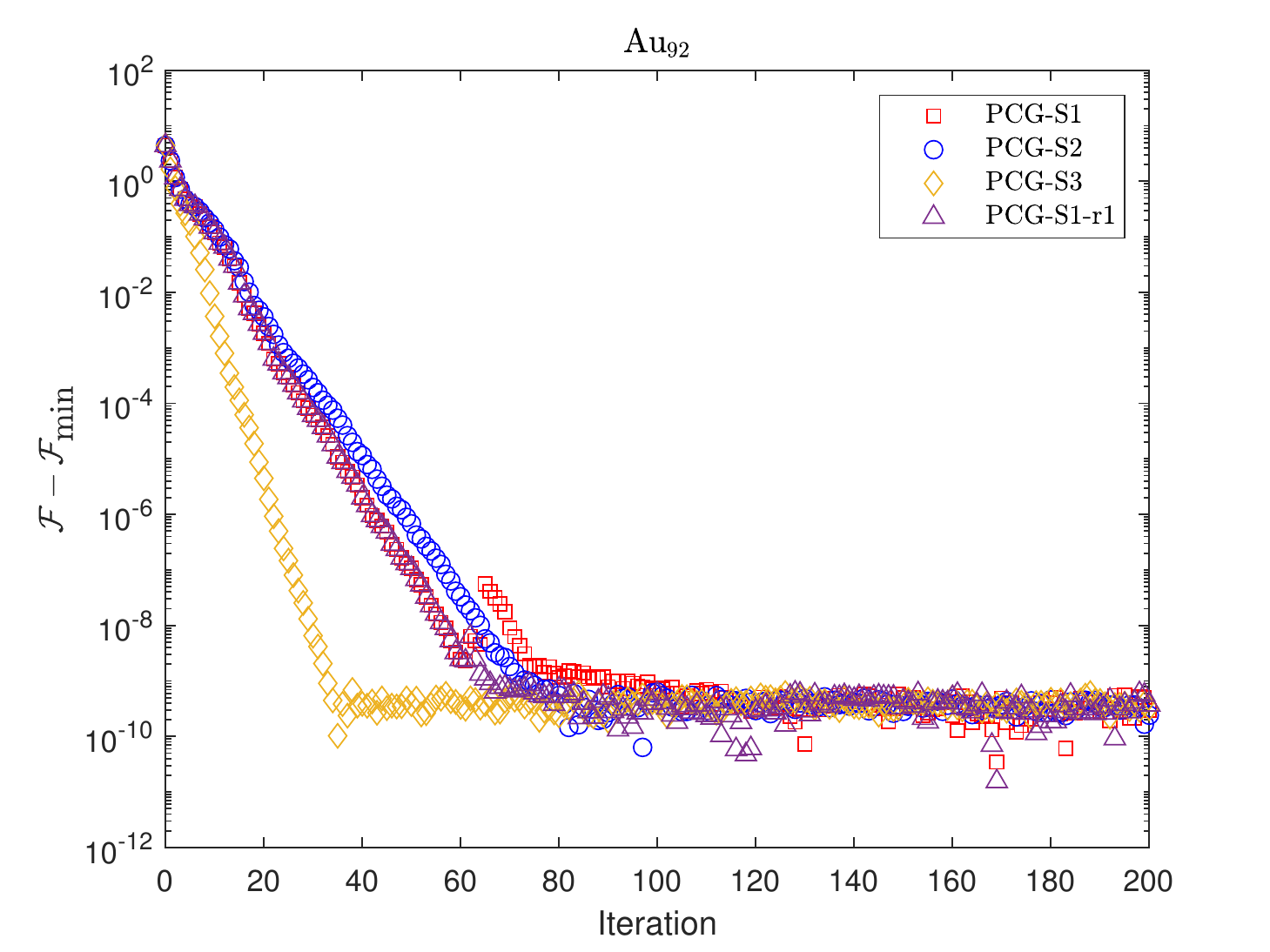}\\
		\includegraphics[width=0.68\textwidth]{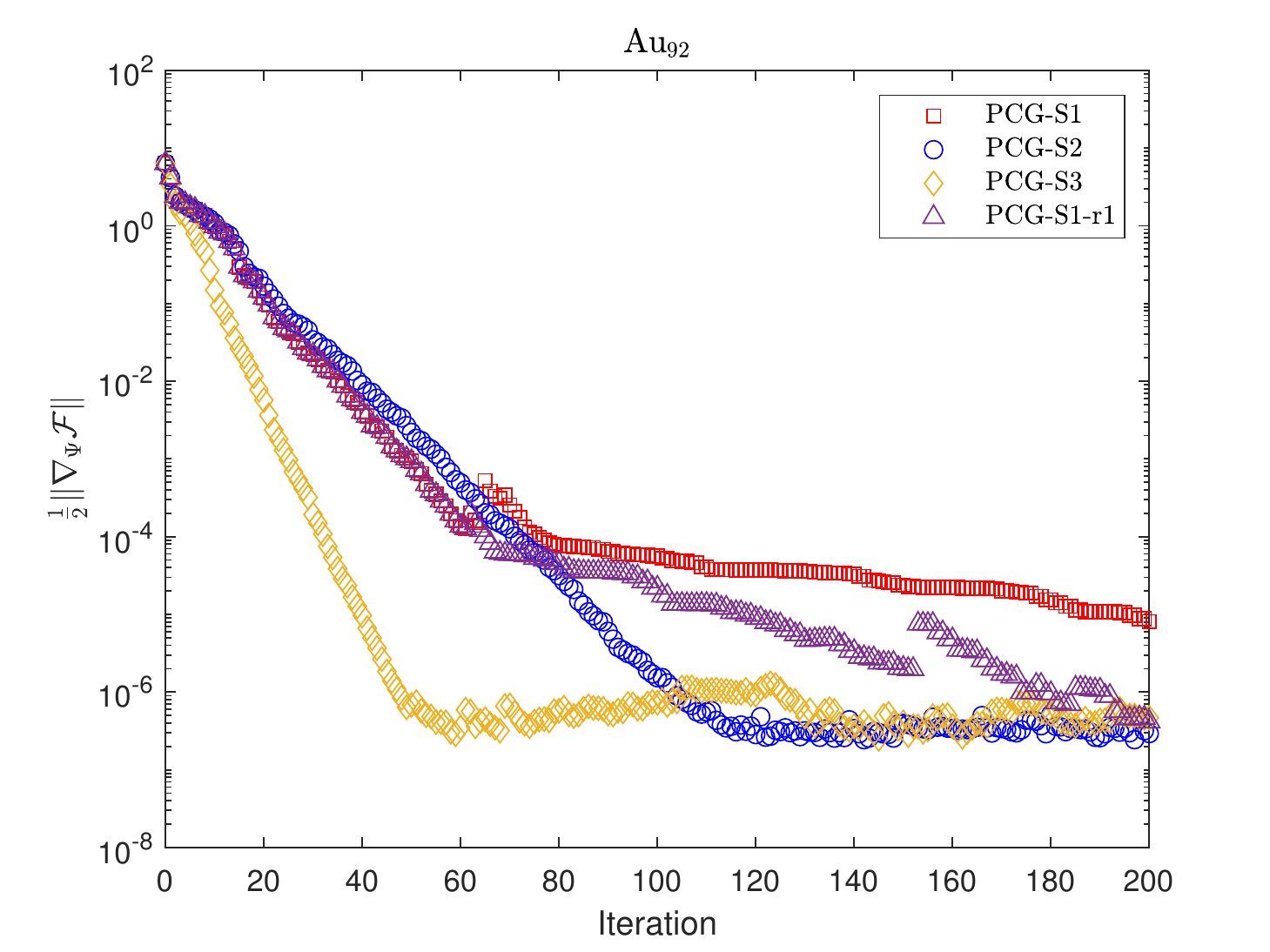}\\
		\includegraphics[width=0.68\textwidth]{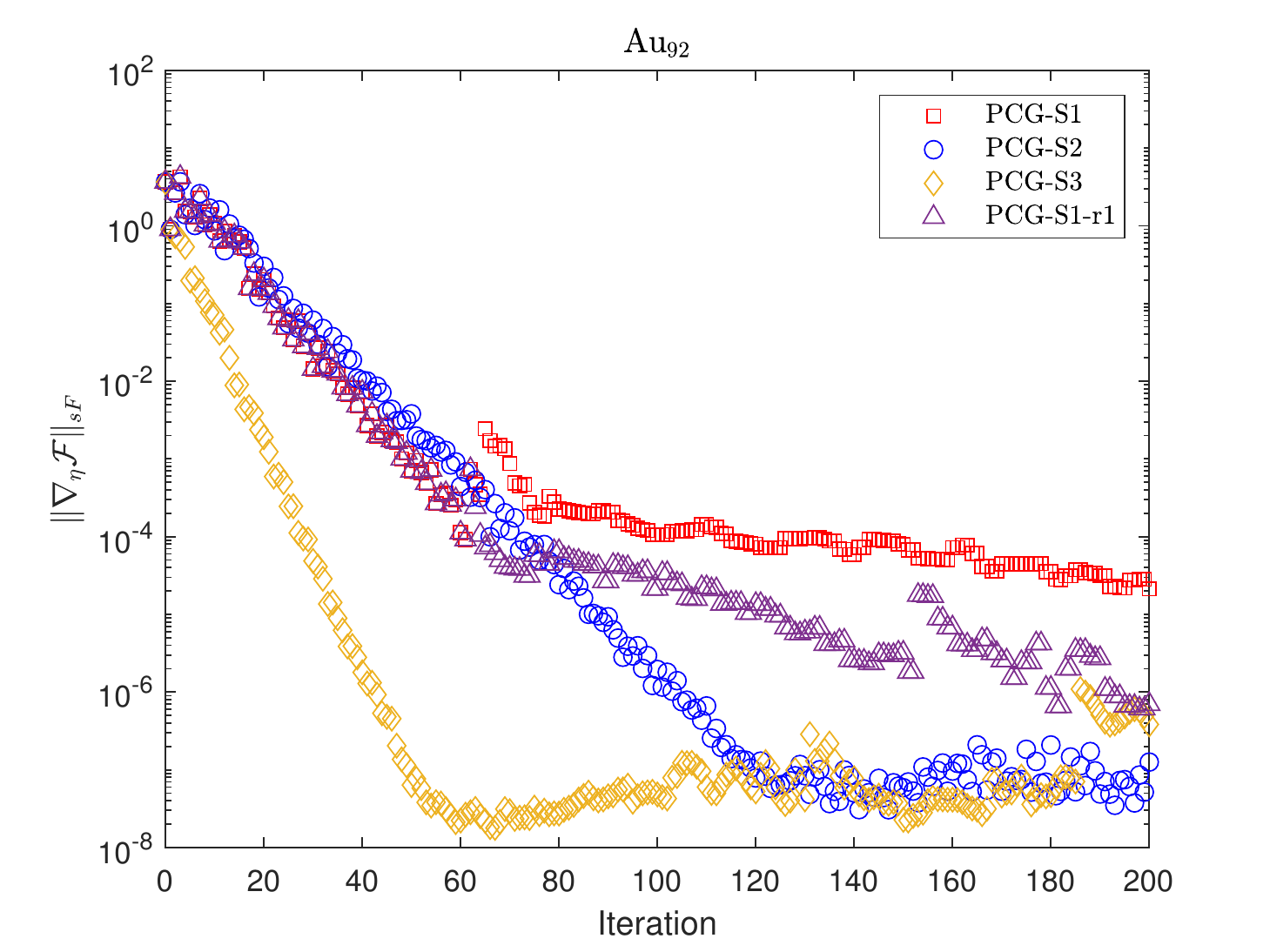}
		\caption{Convergence curves for $\mathcal{F}-\mathcal{F}_{\text{min}}$, $\frac12\|\nabla_\Psi\mathcal
			F\|$ and $\|\nabla_{\eta}\mathcal{F}\|_{sF}$ obtained by different step size strategies for \ce{Au92}.}
		\label{fig:Au92}
	\end{figure}

	We conclude from the above that the strategy \ref{step-d1-2} seems to be the best one among the three strategies. We then choose the PCG method based on the step size strategy \ref{step-d1-2} to be compared with the SCF iterations based on the CG diagonalization. The detailed results are shown in Table \ref{tab:scf-pcg-s3}. We see from Table \ref{tab:scf-pcg-s3} that, apart from \ce{Au14}, the PCG method converges faster than the SCF iterations, especially for large scale systems. For instance, the PCG method converges in half the CPU time of SCF iterations for \ce{Au42}, and the PCG method converges in less than 1/3 the CPU time of the SCF iterations for \ce{Au147}. We also mention that the energy obtained by the PCG method is slightly smaller than that obtained by SCF iterations for \ce{Au20}, \ce{Au42}, \ce{Au50}, \ce{Au92} and \ce{Au309}, which means that SCF iterations may require a smaller convergence threshold to obtain the same energy obtained by the PCG method. However, SCF iterations has already cost more CPU time even with the accuracy in the table.
	\begin{table}[!htbp]
		\centering
		\caption{Comparison of the SCF iterations based on the CG diagonalization and the PCG method based on the step size strategy \ref{step-d1-2}. The density mixing factor for the SCF iterations is 0.3.}
		\label{tab:scf-pcg-s3}
			\begin{tabular}{|c|cccc|}
				\hline
				Algorithm                  & Energy (Ry)                & Iter.       & Error            & CPU time (s)       \\ \hline
				\multicolumn{5}{|c|}{\ce{Au14} \quad $ N_G = 322453 $ \quad $ N = 92 $ \quad $ cores = 36 $}  \\ \hline
				SCF                 & -1194.49861028        & 16         & 9.5E-10       & 271.9         \\
				PCG-S3        & -1194.49861028        & 37         & 8.5E-6        & 299.2         \\ \hline
				\multicolumn{5}{|c|}{\ce{Au18} \quad $ N_G = 322453 $ \quad $ N = 119 $ \quad $ cores = 36 $} \\ \hline
				SCF                 & -1536.01945578        & 18         & 5.5E-10       & 452.7         \\
				PCG-S3        & -1536.01945578        & 37         & 8.9E-6        & 384.6         \\ \hline
				\multicolumn{5}{|c|}{\ce{Au20} \quad $ N_G = 322453 $ \quad $ N = 132 $ \quad $cores= 36 $}   \\ \hline
				SCF                 & -1706.76523999        & 15         & 8.3E-10       & 470.2         \\
				PCG-S3        & -1706.76524000        & 38         & 9.1E-6        & 429.7         \\ \hline
				\multicolumn{5}{|c|}{\ce{Au32} \quad $ N_G = 429409 $ \quad $ N = 211 $ \quad $cores= 36 $}   \\ \hline
				SCF                 & -2731.11762824        & 16         & 3.5E-10       & 1476.1        \\
				PCG-S3        & -2731.11762824        & 38         & 8.3E-6        & 1019.1        \\ \hline
				\multicolumn{5}{|c|}{\ce{Au42} \quad $ N_G = 429409 $ \quad $ N = 277 $ \quad $ cores = 36 $} \\ \hline
				SCF                 & -3584.66580291        & 20         & 2.2E-11       & 2870.1        \\
				PCG-S3        & -3584.66580292        & 39         & 8.5E-6        & 1390.6        \\ \hline
				\multicolumn{5}{|c|}{\ce{Au50}\quad $N_G=429409$\quad $N=330$\quad $cores=36$}                \\ \hline
				SCF                 & -4267.69535809        & 17         & 7.8E-10       & 3629.5        \\
				PCG-S3        & -4267.69535810        & 39         & 9.2E-6        & 1786.2        \\ \hline
				\multicolumn{5}{|c|}{\ce{Au72}\quad $N_G=556667$\quad $N=475$\quad $cores=36$}                \\ \hline
				SCF                 & -6145.78233806        & 24         & 1.9E-10       & 10766.7       \\
				PCG-S3        & -6145.78233806        & 40         & 9.0E-6        & 3760.7        \\ \hline
				\multicolumn{5}{|c|}{\ce{Au92}\quad $N_G=556667$\quad $N=607$\quad $cores=36$}                \\ \hline
				SCF                 & -7853.07110315        & 21         & 4.8E-10       & 16142.4       \\ 
				PCG-S3        & -7853.07110320        & 40         & 9.8E-6        & 5535.4        \\ \hline
				\multicolumn{5}{|c|}{\ce{Au147}\quad $N_G=1320073$\quad $N=971$\quad $cores=72$}              \\ \hline
				SCF                 & -12547.62980551       & 30         & 3.8E-10       & 39669.2       \\
				PCG-S3        & -12547.62980551       & 42         & 9.0E-6        & 11193.5       \\ \hline
				\multicolumn{5}{|c|}{\ce{Au309}\quad $N_G=1320073$\quad $N=2040$\quad $cores=72$}             \\ \hline
				SCF                 & -26379.41930501       & 23         & 3.5E-10       & 154451.0      \\
				PCG-S3        & -26379.41930507       & 51         & 6.8E-6        & 49831.2       \\ \hline
			\end{tabular}
	\end{table}

	Now, we show the numerical results for the two complicated periodic systems shown in Figure \ref{fig:structure}. Different from the gold clusters, for these two systems, the spin polarization is taken into account and the cases using different initial guesses of wavefunctions are tested. Since these two systems show more obvious metallicity, more smearing strategies may be used. Here, we consider the Gaussian smearing and the Marzari–Vanderbilt smearing, which are some typical smearing functions used in the simulation of metallic systems. The detailed results are reported in Tables \ref{tab:periodic-gs} and \ref{tab:periodic-mv}. Here, $N_{\mathrm{k}}=2|\mathcal{K}|$, ``atomic" means that the initial guess of wavefunctions is generated by the superposition of atomic orbitals, and ``atomic+random" means that the initial guess of wavefunctions is generated by the superposition of atomic orbitals plus a superimposed ``randomization" of atomic orbitals \cite{quantum}. We observe from Tables \ref{tab:periodic-gs} and \ref{tab:periodic-mv} that, for both the two smearing methods, except for the system \ce{NdCu2Si2} with the initial guess of wavefunctions being given by the superposition of atomic orbitals, the SCF iterations fail to converge after 500 iterations. We also see that both the PCG method and the restarded PCG methods can obtain convergent approximations for both the two systems, no matter what kind of initial guesses and smearing methods are used. Comparing the results for PCG-S3 with the results for PCG-S3-r1 and PCG-S3-r2, we observe that the restarting strategy does accelerate the convergence of the PCG method except for the system \ce{NdCu2Si2} calculated by PCG-S3-r2 with the initial guesses of wavefunctions being given by ``atomic+random'' and the Gaussian smearing. Comparing the results for PCG-S3-r1 with the results for PCG-S3-r2, we also see that the second restarting approach (Algorithm \ref{algo:rpcg2}) is better than the first restarting approach (Algorithm \ref{algo:rpcg1}) for the system \ce{AlCrTiV}, but the first restarting approach is better than the second restarting approach for the system \ce{NdCu2Si2}. We conclude that the PCG method and the restarted PCG methods are more stable when different initial orbitals are used and our methods are suitable for different smearing functions.

	\begin{table}
		\caption{Comparison of the SCF iterations based on the Davidson iterative diagonalization, the PCG method and the restarted PCG methods. The density mixing factor for the SCF iterations is 0.4, and the Gaussian smearing with $\sigma=0.01$ Ry is applied.}
		\label{tab:periodic-gs}
		\centering
		\begin{tabular}{|ccccc|}
			\hline
			\multicolumn{1}{|c|}{Algorithm}                  & \multicolumn{1}{c|}{Initial orbitals} & Energy (Ry)    & Iter. & Error   \\ \hline
			\multicolumn{5}{|c|}{\ce{NdCu2Si2} \quad $ N_G = 3837$ \quad $ N = 36 $ \quad $N_k=576$ \quad $ cores = 36 $}                    \\ \hline
			\multicolumn{1}{|c|}{\multirow{2}{*}{SCF}}       & \multicolumn{1}{c|}{atomic}           & -1368.00296219 & 24    & 1.9E-10 \\ \cline{2-5} 
			\multicolumn{1}{|c|}{}                           & \multicolumn{1}{c|}{atomic+random}    & -1367.99467076 & 500   & 6.8E-6  \\ \hline
			\multicolumn{1}{|c|}{\multirow{2}{*}{PCG-S3}}    & \multicolumn{1}{c|}{atomic}           & -1368.00296213 & 440   & 9.9E-6  \\ \cline{2-5} 
			\multicolumn{1}{|c|}{}                           & \multicolumn{1}{c|}{atomic+random}    & -1367.99713429 & 255   & 9.8E-6  \\ \hline
			\multicolumn{1}{|c|}{\multirow{2}{*}{PCG-S3-r1}} & \multicolumn{1}{c|}{atomic}           & -1368.00296213 & 313   & 9.9E-6  \\ \cline{2-5} 
			\multicolumn{1}{|c|}{}                           & \multicolumn{1}{c|}{atomic+random}    & -1367.99713429 & 239   & 9.5E-6  \\ \hline
			\multicolumn{1}{|c|}{\multirow{2}{*}{PCG-S3-r2}} & \multicolumn{1}{c|}{atomic}           & -1368.00296205 & 308   & 8.0E-6  \\ \cline{2-5} 
			\multicolumn{1}{|c|}{}                           & \multicolumn{1}{c|}{atomic+random}    & -1367.99713429 & 290   & 8.8E-6  \\ \hline
			\multicolumn{5}{|c|}{\ce{AlCrTiV} \quad $ N_G = 1759 $ \quad $ N = 25 $ \quad $N_k=144$ \quad $cores= 36 $}                      \\ \hline
			\multicolumn{1}{|c|}{\multirow{2}{*}{SCF}}       & \multicolumn{1}{c|}{atomic}           & -479.31372455  & 500   & 2.1E-4  \\ \cline{2-5} 
			\multicolumn{1}{|c|}{}                           & \multicolumn{1}{c|}{atomic+random}    & -479.31491981  & 500   & 7.0E-4  \\ \hline
			\multicolumn{1}{|c|}{\multirow{2}{*}{PCG-S3}}    & \multicolumn{1}{c|}{atomic}           & -479.36755754  & 200   & 9.9E-6  \\ \cline{2-5} 
			\multicolumn{1}{|c|}{}                           & \multicolumn{1}{c|}{atomic+random}    & -479.36755753  & 283   & 9.3E-6  \\ \hline
			\multicolumn{1}{|c|}{\multirow{2}{*}{PCG-S3-r1}} & \multicolumn{1}{c|}{atomic}           & -479.36755753  & 136   & 9.2E-6  \\ \cline{2-5} 
			\multicolumn{1}{|c|}{}                           & \multicolumn{1}{c|}{atomic+random}    & -479.36755753  & 234   & 9.6E-6  \\ \hline
			\multicolumn{1}{|c|}{\multirow{2}{*}{PCG-S3-r2}} & \multicolumn{1}{c|}{atomic}           & -479.36755753  & 109   & 8.5E-6  \\ \cline{2-5} 
			\multicolumn{1}{|c|}{}                           & \multicolumn{1}{c|}{atomic+random}    & -479.36755754  & 115   & 8.0E-6  \\ \hline
		\end{tabular}
	\end{table}
	
	\begin{table}[!htbp]
		\centering
		\caption{Comparison of the SCF iterations based on the Davidson iterative diagonalization, the PCG method and the restarted PCG methods. The density mixing factor for the SCF iterations is 0.4, and the Marzari–Vanderbilt smearing with $\sigma=0.01$ Ry is applied.}
		\label{tab:periodic-mv}
		\begin{tabular}{|ccccc|}
			\hline
			\multicolumn{1}{|c|}{Algorithm}                  & \multicolumn{1}{c|}{Initial orbitals} & Energy (Ry)    & Iter. & Error   \\ \hline
			\multicolumn{5}{|c|}{NdCu2Si2 \quad $ N_G = 3837$ \quad $ N = 36 $ \quad $N_k=576$ \quad $ cores = 36 $}                    \\ \hline
			\multicolumn{1}{|c|}{\multirow{2}{*}{SCF}}       & \multicolumn{1}{c|}{atomic}           & -1368.00214304 & 24    & 4.0E-10 \\ \cline{2-5} 
			\multicolumn{1}{|c|}{}                           & \multicolumn{1}{c|}{atomic+random}    & -1367.99221653 & 500   & 5.6E-6  \\ \hline
			\multicolumn{1}{|c|}{\multirow{2}{*}{PCG-S3}}    & \multicolumn{1}{c|}{atomic}           & -1368.00214302 & 313   & 9.7E-6  \\ \cline{2-5} 
			\multicolumn{1}{|c|}{}                           & \multicolumn{1}{c|}{atomic+random}    & -1368.00214304 & 541   & 9.9E-6  \\ \hline
			\multicolumn{1}{|c|}{\multirow{2}{*}{PCG-S3-r1}} & \multicolumn{1}{c|}{atomic}           & -1367.99610068 & 309   & 8.0E-6  \\ \cline{2-5} 
			\multicolumn{1}{|c|}{}                           & \multicolumn{1}{c|}{atomic+random}    & -1368.00214304 & 226   & 9.4E-6  \\ \hline
			\multicolumn{1}{|c|}{\multirow{2}{*}{PCG-S3-r2}} & \multicolumn{1}{c|}{atomic}           & -1368.00214301 & 310   & 9.5E-6  \\ \cline{2-5} 
			\multicolumn{1}{|c|}{}                           & \multicolumn{1}{c|}{atomic+random}    & -1368.00214303 & 315   & 9.1E-6  \\ \hline
			\multicolumn{5}{|c|}{AlCrTiV \quad $ N_G = 1759 $ \quad $ N = 25 $ \quad $N_k=144$ \quad $cores= 36 $}                      \\ \hline
			\multicolumn{1}{|c|}{\multirow{2}{*}{SCF}}       & \multicolumn{1}{c|}{atomic}           & -479.31161107  & 500   & 9.6E-4  \\ \cline{2-5} 
			\multicolumn{1}{|c|}{}                           & \multicolumn{1}{c|}{atomic+random}    & -479.30711971  & 500   & 2.1E-3  \\ \hline
			\multicolumn{1}{|c|}{\multirow{2}{*}{PCG-S3}}    & \multicolumn{1}{c|}{atomic}           & -479.36717223  & 204   & 9.8E-6  \\ \cline{2-5} 
			\multicolumn{1}{|c|}{}                           & \multicolumn{1}{c|}{atomic+random}    & -479.36717223  & 154   & 9.2E-6  \\ \hline
			\multicolumn{1}{|c|}{\multirow{2}{*}{PCG-S3-r1}} & \multicolumn{1}{c|}{atomic}           & -479.36717223  & 104   & 9.8E-6  \\ \cline{2-5} 
			\multicolumn{1}{|c|}{}                           & \multicolumn{1}{c|}{atomic+random}    & -479.36717223  & 128   & 9.2E-6  \\ \hline
			\multicolumn{1}{|c|}{\multirow{2}{*}{PCG-S3-r2}} & \multicolumn{1}{c|}{atomic}           & -479.36717223  & 90    & 8.5E-6  \\ \cline{2-5} 
			\multicolumn{1}{|c|}{}                           & \multicolumn{1}{c|}{atomic+random}    & -479.36717223  & 109   & 9.5E-6  \\ \hline
		\end{tabular}
	\end{table}

	\section{Concluding remarks}\label{sec:concluding}
	In this paper, we have first investigated the energy minimization model of the ensemble Kohn-Sham density functional theory from a mathematical aspect, in which the pseudo-eigenvalue matrix and the general smearing approach are involved. We have shown the invariance and the existence of the minimizer of the energy functional and proposed a preconditioned conjugate gradient method to solve the numerical approximations of the energy minimization problem. In particular, we have presented an adaptive double step size strategy since the iterative behavior for $\Psi$ and $\eta$ may be different. Under some mild and reasonable assumptions, we have obtained the global convergence of the PCG algorithm based on the adaptive double step size strategy. We have reported a large number of numerical experiments which can not only verify our theory, but also show the superiority over the traditional SCF iterations. In particular, our numerical experiments have demonstrated that our algorithm can produce convergent numerical approximations for some metallic systems, for which the traditional self-consistent field iterations fails to converge.
	
	\appendix
	\section{Gradient of the energy functional}\label{appx:gradient}
	In this appendix, we introduce the gradient of $\mathcal{F}$ with respect to $\Psi$ and $\eta$. Assume that the exchange-correction functional $\mathcal{E}_{\textup{xc}}$ is differentiable.
	
	Since $\Psi_{\mathrm{k}}$ is complex valued and $\mathcal{F}$ is real valued, $\mathcal{F}$ is not differentiable with respect to $\Psi_{\mathrm{k}}$. Let $\Psi_{\mathrm{k}}=\Psi_{\mathrm{k},\textup{Re}}+\mathrm{i}\Psi_{\mathrm{k},\textup{Im}}$, where $\Psi_{\mathrm{k},\textup{Re}}$ and $\Psi_{\mathrm{k},\textup{Im}}$ are real valued. We see that $\mathcal{F}$ is differentiable with respect to $\Psi_{\mathrm{k},\textup{Re}}$ and $\Psi_{\mathrm{k},\textup{Im}}$. Thus we apply the Wirtinger derivatives. More precisely, we view $\Psi_{\mathrm{k}}$ and $\bar{\Psi}_{\mathrm{k}}$ as two independent variables for all $\mathrm{k}\in\mathcal{K}$, then the energy functional \eqref{eq:free-energy} is a differentiable functional of $\Psi$, $\bar{\Psi}$ and $\eta$, which is still denoted by $\mathcal{F}$ for convenience, namely, $\mathcal{F}(\Psi,\bar{\Psi},\eta)$. A direct calculation shows
	\[
	\mathcal{F}_{\Psi_{\mathrm{k}}}=\frac{1}{2}(\mathcal{F}_{\Psi_{\mathrm{k},\textup{Re}}}-\mathrm{i}\mathcal{F}_{\Psi_{\mathrm{k},\textup{Im}}}).
	\]
	We refer to \cite{kreutz-delgado2009complex} for more details. We use the convenient notation $\mathcal{F}(\Psi,\eta)=\mathcal{F}(\Psi,\bar{\Psi},\eta)$ and $\mathcal{L}(\Psi,\eta,\Lambda)=\mathcal{L}(\Psi,\bar{\Psi},\eta,\Lambda)$. Then there holds
	\[
	\mathcal{F}_{\Psi_{\mathrm{k}}}(\Psi,\eta)=w_{\mathrm{k}}H_{\mathrm{k}}(\rho_{\Psi,\eta})\Psi_{\mathrm{k}} F_{\eta_{\mathrm{k}}}
	\]
	and
	\[
	\mathcal{L}_{\Psi_{\mathrm{k}}}(\Psi,\eta,\Lambda)=w_{\mathrm{k}}(H_{\mathrm{k}}(\rho_{\Psi,\eta})\Psi_{\mathrm{k}} F_{\eta_{\mathrm{k}}} - \mathcal{B}\Psi_{\mathrm{k}}\Lambda_{\mathrm{k}}),
	\]
	where
	\[
	H_{\mathrm{k}}(\rho) = -\frac12(\mathrm{i}\mathrm{k}+\nabla)^2 + \tilde{V}_\text{loc}(\rho)+ \tilde{V}_{\text{nl}}(\rho)
	\]
	with $\displaystyle\tilde{V}_{\text{loc}}(\rho)=V_\text{loc} + \int_{\Omega}\frac{\rho(r)}{|\cdot-r|}\textup{d}r+V_{\text{xc}}(\rho)$, $\tilde{V}_{\text{nl}}(\rho):\Psi_{\mathrm{k}}\mapsto V_{\text{nl}}(\Psi_{\mathrm{k}})+M\tilde{D}\inner{M^*\Psi_{\mathrm{k}}}$, $\displaystyle V_{\text{xc}}(\rho)=\frac{\delta \mathcal{E}_\text{xc}}{\delta\rho}$, and
	\[
	\tilde{D}=\int_{\Omega}\tilde{V}_{\text{loc}}(\rho)(r)\mathcal{Q}(r)\operatorname{d\!}r.
	\]
	It is clear that at any minimizer $(\Psi,\eta)$, we have 
	\[
	\Lambda_{\mathrm{k}}=\langle\Psi_{\mathrm{k}}^* H(\rho_{\Psi,\eta})\Psi_{\mathrm{k}}\rangle F_{\eta_{\mathrm{k}}}.
	\]
	Hence we set
	\[
	\nabla_{\Psi_{\mathrm{k}}}\mathcal{F}(\Psi,\eta)=2\mathcal{L}_{\Psi_{\mathrm{k}}}(\Psi,\eta,(\langle\Psi_{\mathrm{k}}^* H(\rho_{\Psi,\eta})\Psi_{\mathrm{k}}\rangle F_{\eta_{\mathrm{k}}})_{\mathrm{k}\in\mathcal{K}})
	\]
	and $\nabla_{\Psi}\mathcal{F}=(\nabla_{\Psi_{\mathrm{k}}}\mathcal{F})_{\mathrm{k}\in\mathcal{K}}$. Obviously, $\mathcal{L}_{\Psi_{\mathrm{k},\textup{Re}}}=\mathcal{L}_{\Psi_{\mathrm{k},\textup{Im}}}=0$ if and only if $\mathcal{L}_{\Psi_{\mathrm{k}}}=0$.
	
	Then we calculate $\displaystyle\mathcal{F}_{\eta_{\mathrm{k}}}=\left(\frac{\partial\mathcal{F}}{\partial\eta_{\mathrm{k}ij}}\right)_{i,j=1}^N$ by
	referring to Appendix E in \cite{ismail-beigi2000new}. We see that
	\begin{equation}\label{eq:depsilon}
		\textup{d}\epsilon_{\mathrm{k}i} = (P_{\mathrm{k}}^*\textup{d}\eta_{\mathrm{k}} P_{\mathrm{k}})_{ii},\quad i=1,\ldots,N
	\end{equation}
	and
	\begin{equation}\label{eq:dF}
		\begin{aligned}
			(\textup{d}F_{\eta_{\mathrm{k}}})_{ij} &=\sum_{i',j'=1}^N P_{\mathrm{k}ii'}\left(P^*_{\mathrm{k}}f\left(\frac{\eta_{\mathrm{k}}-\mu I}{\sigma}\right)P_{\mathrm{k}}\right)_{i'j'}P^*_{\mathrm{k}j'j}\\
			&= \sum_{i'=1}^N P_{\mathrm{k}ii'} P^*_{\mathrm{k}i'j}\frac{1}{\sigma}f'\left(\frac{\epsilon_{\mathrm{k}i'}-\mu}{\sigma}\right)(\textup{d}\epsilon_{\mathrm{k}i'}-\textup{d}\mu)\\
			&\quad +\sum_{i'\ne j'}P_{\mathrm{k}ii'}P^*_{\mathrm{k}j'j}\frac{f_{\mathrm{k}j'}-f_{\mathrm{k}i'}}{\epsilon_{\mathrm{k}j'}-\epsilon_{\mathrm{k}i'}}(P_{\mathrm{k}}^*\textup{d}\eta_{\mathrm{k}} P_{\mathrm{k}})_{i'j'},
		\end{aligned}
	\end{equation}
	where $P=(P_{\mathrm{k}})_{\mathrm{k}\in\mathcal{K}}\in\left(\mathcal{O}_{\mathbb{C}}^{N\times N}\right)^{|\mathcal{K}|}$, $P_{\mathrm{k}}^*\eta_{\mathrm{k}} P_{\mathrm{k}}=\operatorname{Diag}(\epsilon_{\mathrm{k}1},\ldots,\epsilon_{\mathrm{k}N})$,  $f_{\mathrm{k}i}=f((\epsilon_{\mathrm{k}i}-\mu)/\sigma)$. We get from $\sum\limits_{\mathrm{k}\in\mathcal{K}}w_{\mathrm{k}}\operatorname{tr} F_{\eta_{k}}=N_e$ that
	\begin{equation}\label{eq:dmu}
		\textup{d}\mu = \frac{\sum_{\mathrm{k}\in\mathcal{K}}w_{\mathrm{k}}\sum_{i=1}^N f'\left(\frac{\epsilon_{\mathrm{k}i}-\mu}{\sigma}\right)\textup{d}\epsilon_{\mathrm{k}i}}{\sum_{\mathrm{k}\in\mathcal{K}}w_{\mathrm{k}}\sum_{i=1}^N f'\left(\frac{\epsilon_{\mathrm{k}i}-\mu}{\sigma}\right)}.
	\end{equation}
	Moreover, we have 
	\begin{equation}\label{eq:dS}
		\begin{aligned}
			\textup{d}\left( \sigma\operatorname{tr} S\left(\frac{1}{\sigma}(\eta_{\mathrm{k}}-\mu I)\right)\right)&=\sigma\sum_{i'=1}^N\textup{d} S\left(\frac{\epsilon_{\mathrm{k}i'}-\mu}{\sigma}\right)\\
			&=\sum_{i'=1}^N S'\left(\frac{\epsilon_{\mathrm{k}i'}-\mu}{\sigma}\right)\left(\textup{d} \epsilon_{\mathrm{k}i'}-\textup{d}\mu\right)\\
			&=\sum_{i'=1}^N \frac{1}{\sigma}(\epsilon_{\mathrm{k}i'}-\mu)f'\left(\frac{\epsilon_{\mathrm{k}i'}-\mu}{\sigma}\right)\left(\textup{d} \epsilon_{\mathrm{k}i'}-\textup{d}\mu\right).
		\end{aligned}
	\end{equation}
	It follows from \eqref{eq:dF} and \eqref{eq:dS} that
	\[\small
	\begin{aligned}
		&\mathrel{\phantom{=}}\frac{\partial \mathcal{F}}{\partial\eta_{\mathrm{k}ij}}\\
		&=\frac{\partial \mathcal{E}}{\partial\eta_{\mathrm{k}ij}}-\sum_{\mathrm{k}'\in\mathcal{K}}w_{\mathrm{k}'}\sigma\frac{\partial \operatorname{tr}S\left(\frac{1}{\sigma}(\eta_{\mathrm{k}'}-\mu I)\right)}{\partial\eta_{\mathrm{k}ij}}\\
		&=\sum_{\mathrm{k}'\in\mathcal{K}}\sum_{i',j'=1}^N\frac{\partial\mathcal{E}}{\partial(F_{\eta_{\mathrm{k}'}})_{i'j'}}\frac{\partial(F_{\eta_{\mathrm{k}'}})_{i'j'}}{\partial\eta_{\mathrm{k}ij}}-\sum_{\mathrm{k}'\in\mathcal{K}}w_{\mathrm{k}'}\sigma\frac{\partial \operatorname{tr}S\left(\frac{1}{\sigma}(\eta_{\mathrm{k}'}-\mu I)\right)}{\partial\eta_{\mathrm{k}ij}}\\
		&=\sum_{\mathrm{k}'\in\mathcal{K}}w_{\mathrm{k}'}\sum_{i''=1}^N\left(\sum_{i',j'=1}^N\langle\psi_{\mathrm{k}'j'},H_{\mathrm{k}'}(\rho_{\Psi,\eta})\psi_{\mathrm{k}'i'}\rangle P_{\mathrm{k}'i'i''} P^*_{\mathrm{k}'i''j'}-\epsilon_{\mathrm{k}'i''}+\mu\right)\frac{1}{\sigma}f'\left(\frac{\epsilon_{\mathrm{k}'i''}-\mu}{\sigma}\right)\frac{\partial\epsilon_{\mathrm{k}'i''}}{\partial\eta_{\mathrm{k}ij}}\\
		&\quad-\frac{\partial\mu}{\partial\eta_{\mathrm{k}ij}}\sum_{\mathrm{k}\in\mathcal{K}}w_{\mathrm{k}'}\sum_{i''=1}^N\left(\sum_{i',j'=1}^N\langle\psi_{\mathrm{k}'j'},H_{\mathrm{k}'}(\rho_{\Psi,\eta})\psi_{\mathrm{k}i'}\rangle P_{\mathrm{k}'i'i''} P^*_{\mathrm{k}'i''j'}-\epsilon_{\mathrm{k}'i''}+\mu\right)\frac{1}{\sigma}f'\left(\frac{\epsilon_{\mathrm{k}i''}-\mu}{\sigma}\right)\\
		&\quad +w_{\mathrm{k}}\sum_{i''\ne j''}\left(\sum_{i',j'=1}^N\langle\psi_{\mathrm{k}j'},H_{\mathrm{k}}(\rho_{\Psi,\eta})\psi_{\mathrm{k}i'}\rangle P_{\mathrm{k}i'i''}P^*_{\mathrm{k}j''j'}\right)\frac{f_{\mathrm{k}j''}-f_{\mathrm{k}i''}}{\epsilon_{\mathrm{k}j''}-\epsilon_{\mathrm{k}i''}}P^*_{\mathrm{k}i''i}P_{\mathrm{k}jj''},
	\end{aligned}
	\]
	which together with \eqref{eq:depsilon} and \eqref{eq:dmu} leads to
	\[
	\small
	\begin{aligned}
		&\mathrel{\phantom{=}}\frac{\partial \mathcal{F}}{\partial\eta_{\mathrm{k}ij}}\\
		&=w_{\mathrm{k}}\sum_{i'=1}^N (\langle\tilde{\psi}_{\mathrm{k}i'},H_{\mathrm{k}}(\rho_{\tilde{\Psi},\eta_{\text{D}}})\tilde{\psi}_{\mathrm{k}i'}\rangle-\epsilon_{\mathrm{k}i'}+\mu)\frac{1}{\sigma}f'\left(\frac{\epsilon_{\mathrm{k}i'}-\mu}{\sigma}\right)P^*_{\mathrm{k}i'i}P_{\mathrm{k}ji'}\\
		&\quad-\frac{w_{\mathrm{k}}\sum_{i'=1}^N f'\left(\frac{\epsilon_{\mathrm{k}i'}-\mu}{\sigma}\right)P^*_{\mathrm{k}i'i}P_{\mathrm{k}ji'}}{\sum_{\mathrm{k}'\in\mathcal{K}}w_{\mathrm{k}'}\sum_{i'=1}^N f'\left(\frac{\epsilon_{\mathrm{k}'i'}-\mu}{\sigma}\right)}\sum_{\mathrm{k}'\in\mathcal{K}}w_{\mathrm{k}'}\sum_{i'=1}^N (\langle\tilde{\psi}_{\mathrm{k}'i'},H_{\mathrm{k}'}(\rho_{\tilde{\Psi},\eta_{\text{D}}})\tilde{\psi}_{\mathrm{k}'i'}\rangle-\epsilon_{\mathrm{k}'i'}+\mu)\frac{1}{\sigma}f'\left(\frac{\epsilon_{\mathrm{k}'i'}-\mu}{\sigma}\right)\\
		&\quad +w_{\mathrm{k}}\sum_{i'\ne j'}\langle\tilde{\psi}_{\mathrm{k}j'},H_{\mathrm{k}}(\rho_{\tilde{\Psi},\eta_\text{D}})\tilde{\psi}_{\mathrm{k}i'}\rangle\frac{f_{\mathrm{k}j'}-f_{\mathrm{k}i'}}{\epsilon_{\mathrm{k}j'}-\epsilon_{\mathrm{k}i'}}P^*_{\mathrm{k}i'i}P_{\mathrm{k}jj'}\\
		&=w_{\mathrm{k}}\bigg(\sum_{i'=1}^N (\langle\tilde{\psi}_{\mathrm{k}i'},H_{\mathrm{k}}(\rho_{\tilde{\Psi},\eta_\text{D}})\tilde{\psi}_{\mathrm{k}i'}\rangle-\epsilon_{\mathrm{k}i'})\frac{1}{\sigma}f'\left(\frac{\epsilon_{\mathrm{k}i'}-\mu}{\sigma}\right)P^*_{\mathrm{k}i'i}P_{\mathrm{k}ji'}\\
		&\quad-\frac{\sum_{i'=1}^N f'\left(\frac{\epsilon_{\mathrm{k}i'}-\mu}{\sigma}\right)P^*_{\mathrm{k}i'i}P_{\mathrm{k}ji'}}{\sum_{\mathrm{k}'\in\mathcal{K}}w_{\mathrm{k}'}\sum_{i'=1}^N f'\left(\frac{\epsilon_{\mathrm{k}'i'}-\mu}{\sigma}\right)}\sum_{\mathrm{k}'\in\mathcal{K}}w_{\mathrm{k}'}\sum_{i'=1}^N (\langle\tilde{\psi}_{\mathrm{k}'i'},H_{\mathrm{k}'}(\rho_{\tilde{\Psi},\eta_\text{D}})\tilde{\psi}_{\mathrm{k}'i'}\rangle-\epsilon_{\mathrm{k}'i'})\frac{1}{\sigma}f'\left(\frac{\epsilon_{\mathrm{k}'i'}-\mu}{\sigma}\right)\\
		&\quad +\sum_{i'\ne j'}\langle\tilde{\psi}_{\mathrm{k}j'},H_{\mathrm{k}}(\rho_{\tilde{\Psi},\eta_{\text{D}}})\tilde{\psi}_{\mathrm{k}i'}\rangle\frac{f_{\mathrm{k}j'}-f_{\mathrm{k}i'}}{\epsilon_{\mathrm{k}j'}-\epsilon_{\mathrm{k}i'}}P^*_{\mathrm{k}i'i}P_{\mathrm{k}jj'}\bigg).
	\end{aligned}
	\]
	Here $\tilde{\Psi}=(\tilde{\Psi}_{\mathrm{k}})_{\mathrm{k\in\mathcal{K}}},~\eta_{\text{D}}=(\eta_{\mathrm{k},\text{D}})_{\mathrm{k}\in\mathcal{K}}$, $\tilde{\Psi}_{\mathrm{k}}=(\tilde{\psi}_{\mathrm{k}1},\ldots,\tilde{\psi}_{\mathrm{k}N})=\Psi_{\mathrm{k}} P_{\mathrm{k}}$, $\eta_{\mathrm{k},\text{D}}\coloneqq\operatorname{Diag}(\epsilon_{\mathrm{k}1},\ldots,\epsilon_{\mathrm{k}N})$, and  
	\[
	\frac{f_{\mathrm{k}j'}-f_{\mathrm{k}i'}}{\epsilon_{\mathrm{k}j'}-\epsilon_{\mathrm{k}i'}}=\frac{1}{\sigma}f'\left(\frac{\epsilon_{\mathrm{k}i'}-\mu}{\sigma}\right)
	\]
	provided $\epsilon_{\mathrm{k}j'}=\epsilon_{\mathrm{k}i'}$.
	
	When all $\eta_{\mathrm{k}}$ are diagonal matrix, we see from $P_{\mathrm{k}}=I_N$ for all $\mathrm{k}\in\mathcal{K}$ that
	\begin{align*}
		\frac{\partial \mathcal{F}}{\partial\eta_{\mathrm{k}ij}}&=w_{\mathrm{k}}\bigg( (\langle\psi_{\mathrm{k}i},H_{\mathrm{k}}(\rho_{\Psi,\eta})\psi_{\mathrm{k}i}\rangle-\epsilon_{\mathrm{k}i})\frac{1}{\sigma}f'\left(\frac{\epsilon_{\mathrm{k}i}-\mu}{\sigma}\right)\delta_{ij}\\
		&\quad-\frac{ f'\left(\frac{\epsilon_{\mathrm{k}'i}-\mu}{\sigma}\right)\delta_{ij}}{\sum_{\mathrm{k}'}w_{\mathrm{k}'}\sum_{i'=1}^N f'\left(\frac{\epsilon_{\mathrm{k}'i'}-\mu}{\sigma}\right)} d_{\mu}\\
		&\quad +\langle\psi_{\mathrm{k}j},H(\rho_{\Psi,\eta})\psi_{\mathrm{k}i}\rangle\frac{f_{\mathrm{k}j}-f_{\mathrm{k}i}}{\epsilon_{\mathrm{k}j}-\epsilon_{\mathrm{k}i}}(1-\delta_{ij})\bigg)
	\end{align*}
	for any $\mathrm{k}\in\mathcal{K}$, where
	\begin{equation}\label{eq:dFdmu}
		d_{\mu}=\sum_{\mathrm{k}'\in\mathcal{K}}w_{\mathrm{k}'}\sum_{i'=1}^N (\langle\psi_{\mathrm{k}'i'},H_{\mathrm{k}'}(\rho_{\Psi,\eta})\psi_{\mathrm{k}'i'}\rangle-\epsilon_{\mathrm{k}'i'})\frac{1}{\sigma}f'\left(\frac{\epsilon_{\mathrm{k}'i'}-\mu}{\sigma}\right).
	\end{equation}
	We denote by $\nabla_{\eta_{\mathrm{k}}}\mathcal{F}=\mathcal{F}_{\eta_{\mathrm{k}}}^T=\left(\left(\frac{\partial \mathcal{F}}{\partial\eta_{\mathrm{k}ij}}\right)_{i,j=1}^N\right)^T$, $\nabla_\eta\mathcal{F}=(\nabla_{\eta_{\mathrm{k}}}\mathcal{F})_{\mathrm{k}\in\mathcal{K}}$.
	
	\section{Kohn-Sham equation}\label{appx:Kohn-Sham}
	In this appendix, we show the associated standard Kohn-Sham equation for the ensemble Kohn-Sham DFT.
	
	Let $\mathcal{L}_{\Psi}(\Phi, \eta,\Lambda) = 0$, i.e.,
	\begin{equation}
		H_{\mathrm{k}}(\rho_{\Phi,\eta})\Phi_{\mathrm{k}} F_{\eta_{\mathrm{k}}} = \mathcal{B}\Phi_{\mathrm{k}} \Lambda_{\mathrm{k}},\quad\forall\mathrm{k}\in\mathcal{K}.
	\end{equation}
	Thus we have
	\begin{equation}\label{eq:Lambdaeq}
		\Sigma_{\Phi_{\mathrm{k}},\eta_{\mathrm{k}}}F_{\eta_{\mathrm{k}}} = \Lambda_{\mathrm{k}},
	\end{equation}
	where $\Sigma_{\Phi_{\mathrm{k}},\eta_{\mathrm{k}}}=\langle\Phi_{\mathrm{k}}^* H(\rho_{\Phi,\eta})\Phi_{\mathrm{k}}\rangle$. Let $\mathcal{L}_{\eta}(\Phi,\eta,\Lambda)=0$. Without loss of generality, let all $\eta_{\mathrm{k}}$ be diagonal. If not, by \eqref{eq:grad-invariant}, we still have $\mathcal{L}_{\Psi_{\mathrm{k}}}=0$ and $\mathcal{L}_{\eta_{\mathrm{k}}}=0$ after diagonalizing $\eta_{\mathrm{k}}$ and then rotating the $\Phi_{\mathrm{k}}$ and performing a similarity transformation on $\Lambda_{\mathrm{k}}$ accordingly.
	
	Denote $\eta_{\mathrm{k}}=\operatorname{Diag}(\epsilon_{\mathrm{k}1},\ldots,\epsilon_{\mathrm{k}N})$. Since $f$ is strictly monotonic decreasing, the derivatives of $f$ are always less than $0$. We obtain from $\eta_{\mathrm{k}}$ being diagonal and $\mathcal{L}_{\eta_{\mathrm{k}}}(\Phi,\eta,\Lambda)=0$ that $\Sigma_{\Phi_{\mathrm{k}},\eta_{\mathrm{k}}}=\eta_{\mathrm{k}}+cI$ is diagonal, where
	\begin{equation}\label{eq:c-value}
		c=\frac{d_{\mu}}{\frac{1}{\sigma}\sum_{\mathrm{k}}w_{\mathrm{k}}\sum_{i'=1}^N f'\left(\frac{\epsilon_{\mathrm{k}i'}-\mu}{\sigma}\right)}.
	\end{equation}
	Here $d_{\mu}$ is defined as \eqref{eq:dFdmu}. Denote $\varepsilon_{\mathrm{k}i}=\epsilon_{\mathrm{k}i}+c$, then $\Sigma_{\Phi_{\mathrm{k}},\eta_{\mathrm{k}}}=\operatorname{Diag}(\varepsilon_{\mathrm{k}1},\ldots,\varepsilon_{\mathrm{k}N})$. Consequently, we arrive at the standard Kohn-Sham equation
	\begin{equation}\label{eq:KS}
		H(\rho)\phi_{\mathrm{k}i} = \varepsilon_{\mathrm{k}i}\mathcal{B}\phi_{\mathrm{k}i},\quad i=1,2,\ldots,N.
	\end{equation}
	where $\displaystyle\rho=\sum_{\mathrm{k}\in\mathcal{K}}w_{\mathrm{k}}\operatorname{tr}((\Phi^*_{\mathrm{k}}\Psi_{\mathrm{k}}+\langle\Phi^*_{\mathrm{k}}M\rangle\mathcal{Q}\langle M^*\Phi_{\mathrm{k}}\rangle) F_{\eta_{\mathrm{k}}})$, $\eta_{\mathrm{k}}=\operatorname{Diag}(\varepsilon_{\mathrm{k}1},\varepsilon_{\mathrm{k}2},\ldots,\varepsilon_{\mathrm{k}N})$.
	
	If $\Lambda_{\mathrm{k}}$ are forced to be Hermitian, then we can derive the Kohn-Sham equation without the condition $\mathcal{L}_{\eta}(\Phi,\eta,\Lambda)=0$. Indeed, it is clear that $\Sigma_{\Phi_{\mathrm{k}},\eta_{\mathrm{k}}}=\langle\Phi^*_{\mathrm{k}} H(\rho_{\Phi,\eta})\Phi_{\mathrm{k}}\rangle$ are Hermitian since Hamiltonian operator $H(\rho_{\Phi,\eta})$ is self-adjoint. It follows from  $\Lambda^*_{\mathrm{k}}=\Lambda_{\mathrm{k}}$ and $F_{\eta_{\mathrm{k}}}^*=F_{\eta_{\mathrm{k}}}$ that
	\begin{equation}\label{eq:Sigma-commute-F}
		\Sigma_{\Phi_{\mathrm{k}},\eta_{\mathrm{k}}} F_{\eta_{\mathrm{k}}} = F_{\eta_{\mathrm{k}}} \Sigma_{\Phi_{\mathrm{k}},\eta_{\mathrm{k}}}.
	\end{equation}
	Thus there exists $P\in\left(\mathcal{O}^{N\times N}\right)^{|\mathcal{K}|}$ such that
	$$\Sigma_{\Phi_{\mathrm{k}} P_{\mathrm{k}},P_{\mathrm{k}}^*\eta_{\mathrm{k}} P_{\mathrm{k}}}=P^*_{\mathrm{k}}\Sigma_{\Phi_{\mathrm{k}},\eta_{\mathrm{k}}}P_{\mathrm{k}},\quad F_{P_{\mathrm{k}}^*\eta P_{\mathrm{k}}}=P^*_{\mathrm{k}} F_{\eta_{\mathrm{k}}} P_{\mathrm{k}},\quad P^*_{\mathrm{k}}\Lambda_{\mathrm{k}} P_{\mathrm{k}}$$ are diagonal. Let $\operatorname{Diag}(\varepsilon_{\mathrm{k}1},\ldots,\varepsilon_{\mathrm{k}N})=P_{\mathrm{k}}^*\Lambda_{\mathrm{k}} F_{\eta_{\mathrm{k}}}^{-1} P_{\mathrm{k}}$. We still denote $\Phi_{\mathrm{k}} P_{\mathrm{k}}$ and $P^*_{\mathrm{k}}\eta_{\mathrm{k}} P_{\mathrm{k}}$ by $\Phi_{\mathrm{k}}$  and $\eta_{\mathrm{k}}$, respectively. Consequently, we arrive at \eqref{eq:KS}.
	
	The Kohn-Sham equations \eqref{eq:KS} are usually solved by the SCF iterations which is stated as Algorithm \ref{algo:scf}.
	
	\begin{algorithm}[!htbp]
		\caption{The SCF iteration method for solving ensemble Kohn-Sham DFT}
		\label{algo:scf}
		\begin{algorithmic}[1]
			\STATE Given $\epsilon>0$, $\sigma$ and initial guess of the input density $\rho_{\textup{in}}$. Set $\rho_{\textup{out}}=0$;
			\WHILE{$\|\rho_{\textup{out}}-\rho_{\textup{in}}\|>\epsilon$}
			\STATE Obtain the input density $\rho_{\textup{in}}$ by some mixing schemes from $\rho_{\textup{out}}$ and the density of previous steps;
			\STATE Solve the linear eigenvalue problems
			\[
			H(\rho_{\textup{in}})\phi_{\mathrm{k}i} = \varepsilon_{\mathrm{k}i}\phi_{\mathrm{k}i},
			\]
			to get eigenpairs $(\phi_{\mathrm{k}i},\varepsilon_{\mathrm{k}i})$, $\mathrm{k}\in\mathcal{K},~i=1,2\ldots,N$;
			\STATE Calculate $\mu$ and occupation numbers $f_{\mathrm{k}i}$ corresponding to eigenfunctions $\phi_{\mathrm{k}i}$ such that $\sum\limits_{\mathrm{k}\in\mathcal{K}}w_{\mathrm{k}}\sum\limits_{i=1}^N f_{\mathrm{k}i}=N_e$ and
			\[
			f_{\mathrm{k}i}=f\left(\frac{\varepsilon_{\mathrm{k}i}-\mu}{\sigma}\right);
			\]
			\STATE Calculate output density
			\[
			\rho_{\textup{out}}=\sum_{\mathrm{k}\in\mathcal{K}}w_{\mathrm{k}}\operatorname{tr}((\Psi^*_{\mathrm{k}}\Psi_{\mathrm{k}}+\langle\Psi^*_{\mathrm{k}}M\rangle\mathcal{Q}\langle M^*\Psi_{\mathrm{k}}\rangle) F_{\eta_{\mathrm{k}}}),
			\]
			where $F_{\eta_\mathrm{k}}=\operatorname{Diag}(f_{\mathrm{k}1},f_{\mathrm{k}2},\ldots,f_{\mathrm{k}N})$;
			\ENDWHILE
		\end{algorithmic}
	\end{algorithm}	

	\section*{Acknowledgments}
	The authors would like to thank Professor Zhigang Wang for providing the configurations of the gold clusters, Professor Nicola Marzari for providing the configurations of the multicomponent systems, and Dr. Liwei Zhang for his helpful discussions. 
	 
	\bibliographystyle{siamplain}
	\bibliography{refs}
\end{document}